\documentclass[10pt,draft, reqno]{amsart}

\usepackage[margin=18mm]{geometry}
\usepackage{amssymb, amsmath, amsthm, amscd}

\usepackage[T1]{fontenc}
\usepackage{eucal,mathrsfs,dsfont}%gives nicer set-names. Use \mathds instead of \mathds
\usepackage{color}%cyan,red;magenta,green;yellow,blue

\usepackage{soul}

\renewcommand{\leq}{\leqslant}
\renewcommand{\geq}{\geqslant}
\renewcommand{\le}{\leqslant}
\renewcommand{\ge}{\geqslant}

\def\eps{\varepsilon}

\definecolor{mno}{rgb}{0.5,0.1,0.5}

\newcommand{\R}{\mathds R}

\newcommand{\M}{\mathscr{M}}

\newcommand{\e}{\varepsilon}
\newcommand{\T}{\mathds T}

\newcommand{\Pp}{\mathds P}
\newcommand{\Ee}{\mathds E}

\newcommand{\I}{\mathds 1}

\newcommand{\Z}{\mathds Z}
\newcommand{\D}{\mathscr{D}}

\newcommand{\sL}{\mathcal{L}}
\newcommand{\sE}{\mathcal{E}}
\newcommand{\sF}{\mathcal{F}}
\newcommand{\sN}{\mathcal{N}}

\newtheorem{theorem}{Theorem}[section]
\newtheorem{lemma}[theorem]{Lemma}
\newtheorem{proposition}[theorem]{Proposition}

\theoremstyle{definition}

\newtheorem{example}[theorem]{Example}
\newtheorem{remark}[theorem]{Remark}
\newtheorem{assumption}[theorem]{Assumption}

\numberwithin{equation}{section}

\begin{document}
\allowdisplaybreaks

\title[High-order convergence rates for symmetric L\'evy type operators]
{\bfseries High-order convergence rates of periodic homogenization for symmetric L\'evy type operators}

\author{Xin Chen, \quad Zhen-Qing Chen,\quad Takashi Kumagai\quad \hbox{and}\quad Jian Wang}
\thanks{\emph{X.\ Chen:}
   School of Mathematical Sciences, Shanghai Jiao Tong University, 200240 Shanghai, P.R. China. \texttt{chenxin217@sjtu.edu.cn}}
   \thanks{\emph{Z.-Q.\ Chen:}
   Department of Mathematics, University of Washington, Seattle, WA 98195, USA.
 \texttt{zqchen@uw.edu}}
   \thanks{\emph{T.\ Kumagai:}
Department of Mathematics,
Waseda University, Tokyo 169-8555, Japan.
\texttt{t-kumagai@waseda.jp}}
  \thanks{\emph{J.\ Wang:}
    School of Mathematics and Statistics \& Key Laboratory of Analytical Mathematics and Applications (Ministry of Education) \& Fujian Provincial Key Laboratory
of Statistics and Artificial Intelligence, Fujian Normal University, 350007 Fuzhou, P.R. China. \texttt{jianwang@fjnu.edu.cn}}

\begin{abstract}  In this paper, we establish higher-order convergence rates
of the periodic homogenization
for symmetric L\'evy-type operators, encompassing the subcritical
$\alpha$-stable regime, critical regime, and supercritical diffusive regime.
To this end, we develop a systematic framework to decompose the contributions of the underlying jumping kernel across small, intermediate, and large spatial scales -- a strategy tailored to all the aforementioned regimes. To the best of our knowledge, this work represents the first comprehensive study of higher-order convergence rates in the homogenization of non-local operators.

\medskip

\noindent\textbf{Keywords}: symmetric L\'evy type operator; periodic homogenization; convergence rate; corrector

\medskip

\noindent \textbf{MSC 2020}: 35B27, 60G51, 60G52
\end{abstract}

\maketitle

\section{Introduction}\label{section1}

\subsection{Setting and background}
In this paper, we consider the following symmetric L\'evy-type operator with periodic coefficients:
\begin{equation}\label{e1-1}
\begin{split}
\sL f(x)&={\rm p.v.}\int_{\R^d}\left(f(y)-f(x)\right)K(x,y)
j(x-y)
\,dy\\
&=\lim_{\eta \to 0}\int_{\{y\in \R^d: |y-x|>\eta\}}\left(f(y)-f(x)\right)K(x,y)
j(x-y)\,dy,\quad f\in C_b^2(\R^d),
\end{split}
\end{equation}
where $K\in C_b^1(\R^d\times \R^d)$
is a multivariate 1-periodic
 function (that is, it can be viewed as a function defined on $\T^d\times \T^d$
 with
 $\T^d:=(\R/\Z)^d$) such that
$
K(x,y)=K(y,x)$ and $0<K_1\le K(x,y)\le K_2<\infty$ for all $x,y\in \R^d$
with some constants $K_1$ and $K_2$,
and the function
$j:\R^d\to [0,\infty)$ satisfies
$$
j(z)=j(-z) \ \hbox{ for all } z\in \R^d \quad
\hbox{ and } \quad 0<  \int_{\R^d}\left(1\wedge |z|^2\right)j(z)\,dz<\infty.
$$
Here
$C^2_b(\R^d)$ denotes the space of twice differentiable functions on $\R^d$ with bounded first and second derivatives.

\medskip

Throughout the paper, we assume that there exists a Feller process $X:=\{(X_t)_{t\ge0}; (\Pp_x)_{x\in \R^d}\}$ associated with $\sL$ in the sense that for every $f\in C_b^2(\R^d)$,
\begin{equation}\label{e:mar}\left\{f(X_t)-f(X_0)-\int_0^t \sL f(X_s)\,ds,t\ge0\right\}\,\, \hbox{ is a martingale }\end{equation} under $\Pp_x$ for all $x\in \R^d$ with respect to the natural filtration generated by $X$.
For later use,
denote by $\D(\sL)$ the set of functions
$f$ on $\R^d$ so   that \eqref{e:mar} holds.
The notation $\D(\sL)$ is closely
connected to the domain of the full generator of a Markov process; see
\cite[p.\ 24-25]{BSW}. All the conventions above apply to other operators $\sL_\e$ and $\bar \sL$ below.

\medskip

We consider a suitable scaling of the Feller process $X$.
Let $\varphi $ be a strictly positive  function defined on $\R_+:=(0, \infty)$ with $\lim_{\e \to 0+} \varphi (\e)=\infty$.
For $\e>0$, define $ X_t^\e:= \e X_{\varphi(\e)t}$ for all $t\ge0$. Then $X^\e:=(X^\e_t)_{t\ge0}$ is a Feller process on $\R^d$
whose associated generator $(\sL_\e, \D (\sL_\e))$ acting on $f\in C_b^2(\R^d)$ is given by
 $$
\sL_\e f(x):=\varphi(\e)\e^{-d}{\rm p.v.}\int_{\R^d}\left(f(y)-f(x)\right)K (\e^{-1} x,\e^{-1} y ) j \left(\e^{-1} (x-y) \right)\,dy.
$$

As shown in \cite{CCKW}, the scaling limit of
$\sL_\e$ as $\e \to 0$
depends on the behaviors of the
kernel
 $j(z)$ as $|z|\to \infty$.

\begin{assumption}\label{a1-1}\it Suppose that one of the following three
mutually exclusive conditions holds.
\begin{itemize}
\item [{\rm(i)}]
{\rm (}Subcritical $\alpha$-stable regime{\rm)}
There exist a constant $\alpha\in (0,2)$ and a function
$\varphi:\R_+\to \R_+$
with $\lim_{\e \to 0+} \varphi (\e)= \infty$
such that
\begin{equation}\label{a1-1-1}
\lim_{\e \to 0} \varphi(\e)\e^{-d}j(\e^{-1} z  )=\frac{1}{|z|^{d+\alpha}},\quad z\in \R^d\backslash \{0\}.
\end{equation}

\item [{\rm(ii)}] $($Critical regime$)$ There exists a
function $\varphi:\R_+ \to \R_+$
with $\lim_{\e \to 0+} \varphi (\e)= \infty$
such that
\begin{equation}\label{a1-1-2}
\lim_{\e \to 0}\e^2\varphi(\e)=0,
\quad
\lim_{\e \to 0}\left(\e^3\varphi(\e)\int_{\{|z|\le \e^{-1}\}}|z|^3j(z)\,dz+\e\varphi(\e)\int_{\{|z|>\e^{-1}\}}|z|j(z)\,dz\right)=0.
\end{equation}
and the limit
\begin{equation}\label{a1-1-3}
A:=\lim_{\e \to 0}\e^2\varphi(\e)\int_{\{|z|\le \e^{-1}\}}(z\otimes z)j(z)\,dz
\end{equation}
exists as a finite and non-trivial symmetric matrix.

\item [{\rm(iii)}]
$($Supercritical diffusive regime$)$
    Let  $\{e_i\}_{1\le i\le d}$ be a   basis of $\R^d$ consisting of unit vectors, and denote by
 ${\rm supp}[j]$ the support of the jumping density function $j$.
 Suppose that for each $1\leq i\leq d$, there is a sequence of distinct points
 $\{z_k^i ; \, k\ge 1\}\subset {\rm supp}[j] \setminus  \{0\}$
 so that
 \begin{equation}\label{a1-1-2a}
\lim_{k\to \infty} z_k^i =z_\infty^i \in \R^d  \quad \hbox{and} \quad
  \lim_{k \to \infty}\frac{z_k^i - z_\infty^i}{|z_k^i-z^i_\infty |}= \pm e_i .
 \end{equation}
 Moreover, assume that
\begin{equation}\label{a1-1-4}
\int_{\R^d}|z|^2 j(z)\,dz<\infty.
\end{equation}
\end{itemize}
\end{assumption}
\noindent
In
the paper, \emph{we always take $\varphi(\e)=\e^{-2}$ for the supercritical diffusive regime}, i.e., when Assumption \ref{a1-1}(iii) holds.

\medskip

Denote by $C_\infty(\R^d)$
the set of continuous functions $f$ on $\R^d$ such that $\lim_{|x|\to\infty} f(x)=0. $
Fix $\lambda >0$.
For
$h\in C_\infty(\R^d)$,
 let
$$ u_\e  (x) :=G^{(\e)}_\lambda h (x)  := \Ee \left[  \int_0^\infty e^{-\lambda t} h(X^\e_t) \,dt  \Big| X^\e_0  =x\right]
$$
 be the $\lambda$-resolvent for the process $X^\e$. So $u_\e$ is the unique function in $ \D(\sL_\e)$ that solves
\begin{equation}\label{e1-3-}
(\lambda -\sL_\e )u_\e  = h    \quad \hbox{on }  \R^d
\end{equation}
in the pointwise sense; see e.g. \cite[Lemma 1.27 and p.\ 25]{BSW}.
To consider the convergence rate of $u_\e$ as $\e \to 0$,
in most of the cases we make the following assumption on the regularity for the Poisson equation associated with the operator $\sL$
(see,  e.g. \cite[(A3) on p.\,2876]{CCKW}). Here and in what follows, let $C(\T^d)$ be the set of continuous functions on $\T^d$. Without any confusion we can see $f\in C(\T^d)$ as that defined on $\R^d$ by the standard periodic extension.

\begin{assumption}\label{a1-2}\it
For every $f\in C(\T^d)$ with $\int_{\T^d}f(y)\,dy=0$, there exists a unique multivariate $1$-periodic
solution $\phi_f\in C^1(\T^d)\cap\D(\sL)$ such that
$\sL \phi_f=f$
on $\T^d$ in the pointwise sense having
$\int_{\T^d}\phi_f(y)\,dy=0$.
 Moreover,
$\|\phi_f\|_\infty+\|\nabla \phi_f\|_\infty\le C_0\|f\|_\infty,$
where $C_0>0$ is independent of $f$.
\end{assumption}

As
mentioned
 in \cite[Proposition 7.6]{CCKW}, if there are constants $c_0>0$ and $\alpha_0\in (1,2)$ such that $j(z)=c_0 |z|^{-(d+\alpha_0)}$
for all $z\in \R^d$ with $|z|\le 1$, then Assumption \ref{a1-2} holds.
See the appendix of this paper for details.

Next, we describe the limit of $u_\e$.  For any $y\in \T^d$, define
\begin{equation}\label{e2-1a}
\Phi_\e(y):=\begin{cases}
{\rm p.v.}\displaystyle\int_{\{|z|\le \e^{-1}\}}z K(y,y+z)j(z)\,dz\quad &\hbox{ when Assumption \ref{a1-1}(i) or (ii) holds},\\
{\rm p.v.}\displaystyle\int z K(y,y+z)j(z)\,dz\quad &\hbox{ when Assumption \ref{a1-1}(iii) holds}.\end{cases}
\end{equation}
Note that $j(z)=j(-z)$ for all $z\in \R^d$, and
$K(x, y)$ is multivariate 1-periodic and symmetric in $(x, y)$. When Assumption \ref{a1-1}(i) or (ii) holds,
we have
\begin{align*}
\int_{\T^d}\Phi_\e (y)\,dy
&= \lim_{\delta \to 0} \int_{\T^d} \int_{\{\delta<|z|\leq \e^{-1}\} } z K(y, y+z) j(z) \,dz\, dy = - \lim_{\delta \to 0} \int_{\T^d} \int_{\{\delta<|z|\leq \e^{-1}\} } z K(y, y- z)  j(z) \,dz \,dy \\
&= - \lim_{\delta \to 0} \int_{\T^d} \int_{\{\delta<|z|\leq \e^{-1}\} } z K(y- z, y)  j(z) \,dz \,dy = - \lim_{\delta \to 0} \int_{\T^d} \int_{\{\delta<|z|\leq \e^{-1}\} } z K(\tilde y, \tilde y +z)  j(z)\, dz \,d\tilde  y \\
&=-  \int_{\T^d}\Phi_\e (y)\,dy.
\end{align*}
The above holds true by the same reasoning, when Assumption \ref{a1-1}(iii)
is satisfied.
Thus  under Assumption \ref{a1-1}, $\int_{\T^d}\Phi_\e (y)\,dy=0$.
Consequently,  under Assumption \ref{a1-2},
there exists a unique periodic   solution $\phi_0^\e : \T^d  \to \R^d$
in $C^1(\T^d)\cap  \D(\sL)$
to the following corrector equation in the  pointwise sense
\begin{equation}\label{e2-2}
\sL \phi_0^\e =- \Phi_\e  \,\,\hbox{on  }  \T^d
\ \hbox{ with } \ \int_{\T^d}\phi_0^\e (y)\,dy=0,
\end{equation}
and   there is a constant $c_0>0$ (which is independent of $\e$) so that
\begin{equation}\label{e2-2a}
\|\phi_0^\e\|_\infty+\|\nabla \phi_0^\e\|\le c_0\|\Phi_\e\|_\infty.
\end{equation}
Below we write the components of $\phi_0^\e(y)$ by $\phi_0^\e(y)=(\phi_{0,1}^\e(y),\cdots,\phi_{0,d}^\e(y))$.
We define the following operator $\bar \sL$ with constant coefficients,
which will turn out to be the limit operator of $\sL_\e$ as $\e \to 0$.
For $f\in C_b^2(\R^d)$, define
\begin{equation}\label{e2-1a-}
\bar \sL f(x) =\begin{cases} \bar \sL_\alpha f(x):=
{\rm p.v.}\displaystyle\int_{\R^d}\left(f(y)-f(x)\right)\frac{\bar K}{|y-x|^{d+\alpha}}\,dy\quad &\hbox{ when Assumption \ref{a1-1}(i) holds}, \medskip \\
\bar \sL_2 f(x):=\frac{1}{2}\left\langle \bar A, \nabla^2 f(x)\right\rangle\quad &\hbox{ when Assumption \ref{a1-1}(ii) holds}, \medskip  \\
\bar \sL_{>2} f(x):=\frac{1}{2}\left\langle \bar A_0, \nabla^2 f(x)\right\rangle\quad &\hbox{ when Assumption \ref{a1-1}(iii) holds},
\end{cases}
\end{equation}where
$\bar K:=\int_{\T^d}\int_{\T^d}K(x,y)\,dx\,dy,$ $\bar A=\bar K A$
with
$A$ being the matrix  defined by \eqref{a1-1-3}, and
\begin{equation}\label{e2-3-2}
\bar A_0:=\int_{\T^d} \left(\int_{\R^d}\left(z+\phi_0(y+z)-\phi_0(y)\right)\otimes \left(z+\phi_0(y+z)-\phi_0(y)\right)K(y,y+z)
j(z)\,dz \,\right)dy.
\end{equation}
Here and in what follows, for the supercritical diffusive regime, i.e., when \eqref{a1-1-4} holds, we write
$\phi_0$ and $\Phi$ for $\phi_0^\e$ and $\Phi_\e$,
 respectively,  since they are independent of $\e$ by \eqref{e2-1a} and \eqref{e2-2}.
Note that $\bar \sL$ of \eqref{e2-1a-} is a L\'evy operator. It uniquely determines a L\'evy process $\bar X:=(\bar X_t)_{t\ge0}$ on $\R^d$
having $\bar \sL$ as its infinitesimal generator.

For any $\lambda>0$ and $h\in  C_\infty(\R^d)$, let
$$ \bar u(x) :=\bar G_\lambda h (x)= \Ee \left[  \int_0^\infty e^{-\lambda t} h (\bar X_t) \,dt  \Big| \bar X_0 =x\right]
$$
 be the $\lambda$-resolvent of $h$ for the L\'evy process $\bar X$. Note that $\bar u$ solves
\begin{equation}\label{e1-3}
( \lambda -\bar \sL)   \bar u  = h \quad \hbox{on } \R^d
\end{equation}
in the  pointwise sense.
Under Assumption \ref{a1-1} and Assumption \ref{a1-2}, Chen, Chen, Kumagai and Wang \cite{CCKW}
proved that,
for any $h\in C_\infty(\R^d)$ and $x\in \R^d$,
$$
 \lim_{\e\to 0}u_\e(x)=\bar u(x).
$$
We refer the reader to \cite{BCKW,CCKW0,CCKW1,CKK,CKW,CKW1,FHS,HDS,KPZ,KKM,PZ,PZ1,PZ2,KP,S,S1} and references therein for the recent progress
on qualitative homogenization for different type of non-local operators.

The results on quantitative homogenization for non-local operators are
relatively limited. Piatnitski, Sloushch, Suslina and Zhizhina \cite{PSSZ} established operator estimates in homogenization for a class of convolution-type non-local operators in the supercritical diffusive regime.
Later they studied in \cite{PSSZ24,PSSZ25,PSSZ26} operator estimates (with explicit rates) as well as
an optimal convergence rate of the resolvents
in homogenization problem of symmetric $\alpha$-stable-like operators
with periodic coefficients in $L^2(\R^d;dx)$ by means of the operator-theoretic approach
and the Gelfand transform.
 Chen, Chen, Kumagai and Wang \cite{CCKW2025} investigated quantitative homogenization for stable-like random walks in the i.i.d. random conductance model by introducing a regional corrector. Subsequently, accounting for the blow-up behavior of solutions near the boundary, Chen, Chen, Kumagai and Wang \cite{CCKW2} derived
convergence rate estimates for the periodic homogenization of Dirichlet problems associated with symmetric stable-like operators. To the best of our knowledge, quantitative estimates, particularly high-order estimates, for the periodic homogenization of general L\'evy-type operators (covering the
subcritical $\alpha$-stable regime, critical regime, and supercritical
diffusive regime specified in Assumption \ref{a1-1})
had not been  explored.
 In this paper, we address this problem by studying the higher-order convergence rates of $u_\e$ as $\e\to0$.

Higher order homogenization extends the classical homogenization by including higher-order terms in the expansion
to analyze scale effects and improve accuracy. It
can be used to determine the effective heterogeneous media by going beyond the traditional
averaging, capturing the microscopic oscillations and wave dispersion.
Higher order homogenization typically utilizes a two-scale asymptotic expansion, introducing a fast variable
for the micro-structure and a slow variable for the macro-scale, creating a hierarchy of equations and correctors.
This is the approach we will use and develop in this paper for non-local  L\'evy-type operators of the form \eqref{e1-1}.

\subsection{Three typical examples} To illustrate  the contributions of this paper, we first present three
 examples conforming to Assumption \ref{a1-1} in this section, demonstrating that our results apply to a broad class of non-local operators characterized by distinct jumping kernels.

We note that the operator \((\sL, C_c^2(\mathbb R^d))\) and its limit counterpart \((\bar \sL,C_c^2(\mathbb R^d))\) are symmetric on $L^2(\R^d;dx)$.
So it is routine to see that there exist (symmetric) closed extension \((\sL, \D_2(\sL))\) and \((\bar \sL, \D_2(\bar \sL))\) in $L^2(\R^d;dx)$ for
\((\sL, C_c^2(\mathbb R^d))\) and \((\bar \sL,C_c^2(\mathbb R^d))\), respectively.
Throughout this paper, we always use
$\sL$ to denote
the action to functions both in $\D(\sL)$ and $\D_2(\sL)$ without cause of confusion.
In this sense, the solutions $u_\e$ and $\bar u$ to the equations \eqref{e1-3-} and \eqref{e1-3} respectively lie in $L^2(\R^d;dx)$ provided $h\in C_\infty\cap L^2(\R^d;dx).$
In particular, by the symmetric property of $\sL_\eps$ in $L^2(\R^d;dx)$ it is easy to verify that
\begin{equation}\label{e1-2a}
\|u_\e\|_{L^2(\R^d;dx)}\le \lambda^{-1}\|h\|_{L^2(\R^d;dx)}.
\end{equation}

To fully exploit the symmetry property inherent
in our framework,
we consider a family of test functions $h\in \M_\beta\subset C_\infty\cap L^2(\R^d;dx)$ for some proper $\beta>0$ (see \eqref{e1-3a} below), and then investigate higher-order homogenization problems in the space $L^2(\R^d;dx)$.
 To derive high-order estimates   between $u_\e$ and $\bar u$,
 we inductively introduce the high-order correctors $\psi_n^\e$ and $\phi_n^\e$ (defined by \eqref{e2-13} and \eqref{e2-14} below), which correspond respectively to the slow and fast variables in the homogenization framework.
Then,
for $n\ge 1$, we can define the $n$-th order expansion $v_n^\e:\R^d\to \R$ for $u_\e$ as follows
(see also \eqref{e2-14a} below)
\begin{equation*}
\begin{split}
v_{n+1}^\e(x)
&=v_{n}^\e(x)-\psi_{n+1}^\e(x)-\e\left\langle \nabla \psi_{n+1}^\e(x), \phi_0^\e\left(\e^{-1} x \right)\right\rangle-
\varphi(\e)^{-1}\phi_{n+1}^\e\left(x,\e^{-1} x \right)\\
&=u_\e(x)-\left(\bar u(x)+\sum_{k=2}^{n+1} \psi_k^\e(x)\right)-\e\left\langle
\left(\nabla \bar u(x)+\sum_{k=2}^{n+1} \nabla \psi_k^\e(x)\right),\phi_0^\e\left(\e^{-1} x \right) \right\rangle-
\varphi(\e)^{-1}\sum_{k=1}^{n+1} \phi_k^\e\left(x,\e^{-1} x \right).
\end{split}
\end{equation*}
In particular,
the correctors
$\psi_n^\e$ and $\phi_n^\e$ serve respectively to eliminate the lower-order terms associated with the slow and fast variables arising from the preceding inductive step, thereby generating new residual terms with higher-order errors.
We also refer the reader to Remark \ref{r1-3} below for detailed explanation
about different terms in $v_{n+1}^\e$.
Our main results, Theorems \ref{thm1}, \ref{thm2} and \ref{thm3}, shows in a sense that the error term $v_{n+1}^\eps$  whose $L^2$-norm on $\R^d$  is of a higher order
in $\eps$ than that of $v_n^\e$.  See also Examples \ref{ex1-1}-\ref{ex1-3} below.

\medskip

Set
\begin{equation}\label{ex1-1-1}
  \xi_0(\e):=\e^{2-\alpha}\I_{\{\alpha\in (1,2)\}}+\e (\log \e )^2 \I_{\{\alpha=1\}}
+\e^{\alpha}\I_{\{\alpha\in (0,1)\}}.
\end{equation}

For the sake of simplicity,
we set $j(z)=|z|^{-d-\alpha_0}$ for all $z\in \R^d$ satisfying $|z|\le 2$, where $\alpha_0\in (1,2)$ unless particularly emphasized.
(Note that $|z|\le 2$ can be replaced by $|z|\le C$ for any $C>0$.
Here we take $C=2$ for simplicity when encountering the logarithmic function  later.)

\medskip
The first example is the most typical
case of jumping kernel of polynomial type.

\begin{example}\label{ex1-1}\it
Suppose that $j(z)=\frac{1}{|z|^{d+\alpha}}$
for all $z\in \R^d$ with $|z|>2$.
\begin{itemize}
\item [(1)]
$($Subcritical $\alpha$-stable regime$)$
If $\alpha\in (0,2)$,  ,
then it holds that
\begin{equation}\label{ex1-1-1b}
\|v_n^\e\|_{L^2(\R^d;dx)}\le C_0(n) \xi_0(\e)^n,\quad n\ge 1,\ \e\in (0,1).
\end{equation}

\item [(2)] $($Critical regime$)$  If $\alpha=2$,
then
it holds that
$$
\|v_n^\e\|_{L^2(\R^d;dx)}\le C_0(n)\left(\frac{1}{|\log \e|}\right)^{n},\quad \ n\ge 1,\ \e\in (0,1).
$$
\item [(3)]
$($Supercritical diffusive regime$)$
If $\alpha>2$, then it holds that
$$
\|v_n^\e\|_{L^2(\R^d;dx)}\le C_0(n) \left(\max\{\e,\e^{\alpha-2}\}\right)^n,\quad \ n\ge 1,\ \e\in (0,1).
$$
\end{itemize}
\end{example}

\medskip

The next example is when
$j(z)$ has an additional multiplicative factor $(\log |z|)^m$.

\begin{example}\label{ex1-2}\it
Suppose that $j(z)= \frac{(\log |z|)^m}{|z|^{d+\alpha}}$
for all $z\in \R^d$ with $|z|>2$.
\begin{itemize}
\item [(1)]
$($Subcritical $\alpha$-stable regime$)$
 If $\alpha\in (0,2)$ and $m\in \R$,
 then it holds that
\begin{equation}\label{ex1-1-2}
\|v_n^\e\|_{L^2(\R^d;dx)}\le C_0(n,m)
 \left(\frac{1}{|\log \e|}\right)^n,\quad n\ge 1,\ \e\in (0,1).
\end{equation}
\item [(2)] $($Critical regime$)$  If $\alpha=2$ and $m\in (-1,\infty)$,
then
$$
\|v_n^\e\|_{L^2(\R^d;dx)}\le
C_0(n,m)\left(\frac{1}{|\log \e|}\right)^{\min\{1,m+1\}n},\quad \ n\ge 1,\ \e\in (0,1).
$$
Furthermore, if $\alpha=2$ and $m=-1$,
then
$$
\|v_n^\e\|_{L^2(\R^d;dx)}\le C_0(n)\left(\frac{1}{|\log \log \e^{-1}|}\right)^{n},\quad \ n\ge 1,\ \e\in (0,1).
$$
\item [(3)]
$($Supercritical diffusive regime$)$
 If $\alpha=2$ and $m<-1$,
  then
$$
\|v_n^\e\|_{L^2(\R^d;dx)}\le C_0(n,m)\left(\frac{1}{|\log \e|}\right)^{(m+1)n},
\quad \ n\ge 1,\ \e\in (0,1);$$
If $\alpha>2$ and $m\in \R$,  then
$$
\|v_n^\e\|_{L^2(\R^d;dx)}\le C_1(n,m)\left(\max\{\e, \e^{\alpha-2}|\log \e|^m\}\right)^n,
\quad \ n\ge 1,\ \e\in (0,1).$$
\end{itemize}
\end{example}

\medskip
For the subcritical $\alpha$-stable regime, we have an additional example where the polynomial is mixed order as follows.

\begin{example}\label{ex1-3}\it$($Mixed $\alpha$-stable regime$)$
\begin{itemize}
\item [(1)] If there are
$0<\alpha_1 <\alpha<2$ such that   $ j(z)=\frac{1}{|z|^{d+\alpha}+|z|^{d+\alpha_1}}$ for all $z\in \R^d$ with $|z|>2$,
then
$$
\|v_n^\e\|_{L^2(\R^d;dx)}\le C_0(n) \left( \max\{ \xi_0(\e),\e^{\alpha-\alpha_1}\}\right)^n,\quad n\ge 1,\ \e\in (0,1).
$$

\item [(2)] If there are $\alpha\in (0,2)$ and
$\alpha<\alpha_1$
such that $j(z)=
\frac{1}{|z|^{d+\alpha}}+\frac{1}{|z|^{d+\alpha_1}}
$
 for all $z\in \R^d$ with $|z|>2$,
then
$$
\|v_n^\e\|_{L^2(\R^d;dx)}\le C_0(n)\left(\max\{\xi_0(\e),
\e^{\alpha_1-\alpha}\}\right)^n,\quad n\ge 1,\ \e\in (0,1).$$
\end{itemize}
\end{example}

\medskip
In the above examples, the constants $C_0(n), C_0(n,m) $ and $C_1(n,m)$ are independent of $\e$. Moreover,
it hold that $\lim_{m \to 0}C_0(n,m)>0$ for the subcritical $\alpha$-stable regime,
$\lim_{m\downarrow -1} C_0(n,m)=\infty$
for the critical regime, and
$\lim_{m \uparrow -1}C_0(n,m)=\infty$
for the supercritical diffusive regime.

\subsection{Remarks and novel contributions}
For models related to
the
second-order differential operators (or discrete elliptic differential operators), important progress has been made on
the
quantitative theory of (both periodic and stochastic) homogenizations recently;
 see \cite{ABK, AK, AKM1, AKM, CMOW, GNO, GO, KLS, KLS1, KLS2, Sh} and the references cited therein
for more details.
Moveover,
higher-order
convergence rates of homogenization have been further explored.
In a series of seminal works \cite{KL,KL1,KL2}, Kim and Lee established higher-order convergence rates of periodic homogenization for fully nonlinear elliptic equations, parabolic equations with oscillatory initial data, and viscous Hamilton-Jacobi equations, respectively. Gu \cite{G} obtained
higher-order convergence rates of homogenization for divergence-form elliptic equations in finite-range dependent ergodic environments. Bella, Fehrman, Fischer and Otto \cite{BFFO} derived high-order error estimates in weak norms for the stochastic homogenization of elliptic equations. Kleptsyna, Piatnitski and Popier \cite{KPP} further analyzed
higher-order convergence rates of periodic homogenization for a class of non-autonomous second-order differential operators.
For  optimal convergence rates of periodic homogenization for second-order differential operators
(with non-divergence form), we refer the reader to \cite{GST, ST} and the references therein.

We close this section with several remarks on the aforementioned examples and our main results,
emphasizing the key innovations and novel contributions of this paper.

\begin{itemize}
\item [(i)] In contrast to the high-order estimates for differential operators established in \cite{G,KL,KL1,KL2}, a core challenge in our setting stems from the diversity and complexity of the jumping kernel
for
the non-local operator $\sL$. To address this, we develop a systematic framework to dissect the kernel's contributions across small, intermediate, and large spatial scales, tailored to the distinct regimes specified earlier. This multi-scale analysis is encapsulated by the terms
$\gamma_1(\e)$--$\gamma_{9}(\e)$
detailed in Section \ref{section2}. As demonstrated by Examples \ref{ex1-1}--\ref{ex1-3}, the convergence rate is ultimately governed by the interplay of these scale-dependent effects; notably, even within the same regime, the dominant terms driving the estimates can differ
substantially  among different jumping kernels.

    \item [(ii)]
As mentioned above, in order to derive
high-order estimates, we inductively introduce the high-order correctors $\psi_n^\e$ and $\phi_n^\e$
by \eqref{e2-13} and \eqref{e2-14} below,
which correspond, respectively, to the slow and fast variables in the homogenization framework.
From a probabilistic perspective, the non-local nature of $\sL$ necessitates a quantitative analysis of homogenization effects for two distinct variables: $x$, denoting the current position of the stochastic process $(X_t^\e)_{t\ge0}$, and $z$, representing the magnitude of jumps in the process $(X_t^\e)_{t\ge0}$. This dual-variable dependence induces regime-specific coupling mechanisms between the two correctors -- a key distinction from the differential operator setting in \cite{G,KL,KL1,KL2}, where homogenization only needs to account for the position variable $x$.
In fact, via careful analysis on the interaction of these two variables, we will develop some
systematic methods to obtain quantitative estimates of $\psi_n^\e$ and $\phi_n^\e$, where the effects of
different types of jumping kernels have been taken into account.

    \item [(iii)]    For the subcritical $\alpha$-stable regime, Theorem \ref{thm1} below,
Example \ref{ex1-1}(1), Example \ref{ex1-2}(1)
and Example \ref{ex1-3}
 together reveal that symmetric non-local operators with homogeneous jumping kernels matching the limit operator outside a compact set (i.e., there exists a constant $C_0>0$ such that $j(z)=\frac{1}{|z|^{d+\alpha}}$ for all $z\in \R^d$ with $|z|>C_0$)
attains
 the optimal (fastest) convergence rate, with the $n$-th order expansion yielding a rate of $\xi_0(\e)^n$. In the case $n=1$, this result recovers the Berry-Esseen bounds
   established in \cite{CNX,X}
and \cite[Theorem 4]{CNXYZ}
  for i.i.d. sequences of symmetric  random variables in the domain of attraction of the $\alpha$-stable law.
 (For
i.i.d.
 sequence of symmetric  random variables  whose total variational measure away from the symmetric $\alpha$-stable law has finite second moment,
 it is shown in \cite{B} the Berry-Esseen bound can be improved to the order $\eps$ for $0<\alpha <1$.)
 By contrast, for general non-local operators, discrepancies between the original and limit jumping kernels can lead to a deterioration in convergence speed, as exemplified by
Example \ref{ex1-2}(1) and Example \ref{ex1-3}.
            We further emphasize that the assertion in \eqref{ex1-1-2} is consistent with \cite[Proposition 1]{YP}, which establishes analogous convergence rates for sums of one-dimensional i.i.d.\ $\alpha$-stable random variables with logarithmic perturbations, as such sums converge to an $\alpha$-stable law.

                          \item [(iv)] As elaborated in Remark \ref{r2-2} below, the regularity of the solution $\bar u$ to the limit equation \eqref{e1-3} exerts a fundamental influence on the convergence rate. This dependence originates from the homogenization error associated with the jump-size variable $z$, a distinctive feature of symmetric non-local operators $\sL$ that may not be extended to their non-symmetric counterparts.

                                     \item [(v)] There is an interesting aspect in Example \ref{ex1-3}.
 Indeed, both for (1) and (2),  $j(z)\asymp |z|^{-d-\alpha}$ for $|z|\ge 2$
 and $j(z)=|z|^{-d-\alpha_0}$ for $|z|\le 2$.
The estimates of $\|v_n^\e\|_{L^2(\R^d;dx)}$  all contain a term $\e^{|\alpha-\alpha_1|}$, which slows down to order $\eps^0=1$ as $\alpha_1\to \alpha$.
Some explanations are in order for this counterintuitive phenomenon as well as that of Example \ref{ex1-2}.
For this, let us look at a simple example, where $\sL$ is the L\'evy generator of \eqref{e1-1} with $K(x, y) \equiv 1$ and $j(z)= \frac{1}{|z|^{d+\alpha}}+\frac{1}{|z|^{d+\alpha_1}}$
for all $z\in \R^d\setminus \{0\}$ with $1<\alpha <\alpha_1$; that is, $\sL$ is the infinitesimal generator of a L\'evy process $(X_t)_{t\ge0}$ that is the independent sum of
an isotropic $\alpha$-stable process $Y:=(Y_t)_{t\ge0}$ and an isotropic $\alpha_1$-stable process $Z:=(Z_t)_{t\ge0}$ on $\R^d$.
 Although   $j(z)$ is not set to be  $\frac{1}{|z|^{d+\alpha}}$ on $\{|z|\leq 2\}$, its scaled process $(X^\eps_t)_{t\ge0}:= (\eps X_{\eps^{-\alpha}t})_{t\ge0}$
 shares the same asymptotic behavior as $\eps \to 0$. Indeed, by the $\alpha$-stable scaling,
 $$
 X^\eps_t = \eps Y_{\eps^{-\alpha}t} + \eps  Z_{\eps^{-\alpha}t} \overset{d}=
 Y_t + \eps^{1-(\alpha/\alpha_1)}  \widetilde Z_t,\quad t\ge0,
 $$
 which converges in law to the isotropic $\alpha$-stable process $Y$ as $\eps \to 0$. Here the notation $\overset{d}=$ means having the same distribution, and    $(\widetilde Z_t)_{t\ge0}$ is an independent copy of $Z$.
 Note that
 $$
\sL_\e f(x)={\rm p.v.}\int_{\R^d}\left(f(y)-f(x)\right) \left( \frac{1}{|x-y|^{d+\alpha}} + \frac{\eps^{\alpha_1-\alpha} }{|x-y|^{d+\alpha_1}}
\right) \, dy
\quad \hbox{ for } f\in C_c^2 (\R^d).
 $$
 Denote the resolvents of the L\'evy processes $Y$ and $Z$ by $\{\bar G^{(\alpha)}_\lambda; \lambda >0\}$ and
 $\{\bar G^{(\alpha_1)}_\lambda; \lambda >0\}$, respectively. Then we have, at least at the heuristic level,
  the following Duhamel's formula for $f\in C^\infty_c(\R^d)$:
 \begin{equation}\begin{split}
 G^{(\eps)}_\lambda f
 &= \bar G^{ (\alpha)}_\lambda f + \eps^{\alpha_1-\alpha} \bar G^{(\alpha)}_\lambda (\Delta^{\alpha_1/2} G^{(\eps)}_\lambda  f)
 =\bar G^{ (\alpha)}_\lambda f + \sum_{n=1}^\infty \eps^{(\alpha_1-\alpha)n} \bar G^{(\alpha)}_\lambda (\Delta^{\alpha_1/2} \bar G^{(\alpha)}_\lambda )^n f \\
 &=\bar G^{ (\alpha)}_\lambda f + \sum_{n=1}^\infty \eps^{(\alpha_1-\alpha)n} \bar G^{(\alpha)}_\lambda ( \bar G^{(\alpha)}_\lambda \Delta^{\alpha_1/2} )^n f ,
 \label{e:1.18}
 \end{split}\end{equation}
 where $\Delta^{\alpha_1/2}$ denotes the infinitesimal generator of the isotropic $\alpha_1$-stable process $Y$, which is a constant multiple
 of the fractional Laplacian of order $\alpha_1/2$. The higher-order expansion \eqref{e:1.18} of $G^{(\eps)}_\lambda f$ in $\eps$ indicates that the estimates
 in Example \ref{ex1-3} are reasonable and sharp.
 \end{itemize}

\ \

The remainder of this paper is organized as follows. In
Section \ref{section2},
we present the main results of
this paper,
which are partitioned into three subsections corresponding to the subcritical $\alpha$-stable, critical, and supercritical diffusive regimes specified in Assumption \ref{a1-1}. In each subsection, following the introduction of necessary notations, we provide a detailed definition of the $n$-th order expansion $v_n^\e$ for $u^\e$, and establish the corresponding upper bounds. Section \ref{section3} compiles preliminary results that underpin the proofs of our main theorems, with explicit bounds derived for $\Lambda_\e^i$ ($1\le i\le 7$) and $\Theta_\e^i$ ($1\le i\le 3$). Finally, Section \ref{section4} is dedicated to the rigorous proofs of the main results and Examples \ref{ex1-1}--\ref{ex1-3}.

\section{Main Results}\label{section2}

In this section, we will present main results of our paper. It is divided into three subsections according to
the subcritical $\alpha$-stable, critical, and supercritical diffusive regimes
specified in Assumption \ref{a1-1}.
For every $\beta>0$, let
\begin{equation}\label{e1-3a}
\M_\beta:=\{f\in C_b^\infty(\R^d):\|f\|_{\M_{\beta},k}\le C_0(k) \hbox{ for every }k\ge 0\},
\end{equation}
where
$
\|f\|_{\M_{\beta},k}:=\sup_{x\in \R^d}\{|\nabla^k f(x)|(1+|x|)^{d+\beta}\}$ for all $k\ge0,
$
and $\nabla^k f$ denotes the $k$-th order gradient for the function $f$. In particular,
$
|\nabla^k f(x)|\le \|f\|_{\M_{\beta},k}(1+|x|)^{-d-\beta}$ for all $x\in \R^d$ and $k\ge 0.
$
Observe that $\M_\beta\subset  C_\infty(\R^d)\cap L^2(\R^d; dx)$.

\subsection{Subcritical $\alpha$-stable regime with $0<\alpha <2$}\label{S:2.1}

Let $\beta >0$.
For every $f\in \M_\beta$ and $x,z\in \R^d$, let
\begin{equation}\label{e1-4}
\delta f(x;z):=f(x+z)-f(x)-\langle \nabla f(x), z \I_{\{|z|\le 1\}}\rangle.
\end{equation}
For every $f\in \M_{\beta}$,
$x\in \R^d$,  $y\in\T^d$ and $\eps>0$, we define
\begin{equation}\label{e1-5a}
\begin{split}
\Lambda_\e^1 f(x,y)&=\int_{\R^d}\delta f(x;z)
K(y,y+\e^{-1} z )\left(\varphi(\e)\e^{-d}j (\e^{-1} z )-\frac{1}{|z|^{d+\alpha}}\right)\,dz,\\
\Lambda_\e^2 f(x,y)&=\int_{\R^d}\delta f(x;z) \frac{K(y,y+\e^{-1} z )-\bar K(y)}{|z|^{d+\alpha}} \,dz,\\
\Theta_\e^1 f(x,y)&=\int_{\R^d}\delta f(x;z)\frac{\bar K(y)-\bar K}{|z|^{d+\alpha}}\,dz,\end{split}
\end{equation}
where $\bar K(y):=\displaystyle\int_{\T^d}K(y,y+z)\,dz$.
For every $f\in \M_{\beta}$, $\phi\in C^1(\T^d)$,
$x\in \R^d$,  $y\in \T^d$ and $\eps>0$, we set
\begin{equation}\label{e1-6}
\Gamma_\e(f,\phi)(x,y):=\varphi(\e)\e^{-d}\int_{\R^d}(f(x+z)-f(x))
(\phi(y+\e^{-1} z )-\phi(y))K(y,y+\e^{-1} z )j (\e^{-1} z )\,dz,
\end{equation}
\begin{equation}\label{l2-1-3a}
\begin{split}
\Upsilon_1^\e f(x,y):= &\sum_{i=1}^2\Lambda_{\e}^i f(x,y)+\Theta_{\e}^1f(x,y)+
\e\sum_{i=1}^d\Gamma_\e(\partial_{x_i}f,\phi_{0,i}^\e)(x,y)\\
&+\e\left\langle \bar \sL_\alpha (\nabla f)(x)+\sum_{i=1}^2\Lambda_\e^i(\nabla f)
(x,y)+\Theta_\e^1(\nabla f)
(x,y), \phi_0^\e(y)\right\rangle\\
&+\e^{2}\varphi(\e)\left\langle
\nabla^2 f(x), \phi_0^\e(y)\otimes \Phi_\e(y)\right\rangle-\lambda\e
\left\langle \nabla f(x), \phi_0^\e(y)\right\rangle
\end{split}
\end{equation}
and
\begin{equation}\label{l2-1-3}
F_1^\e(x,y)  := \Upsilon_\e^1 \bar u(x,y),   \quad \bar F_1^\e(x):=\int_{\T^d}F_1^\e(x,y)\,dy,
\end{equation}
where $\phi_0^\e$ is the solution to \eqref{e2-2},
$\bar \sL_\alpha$ is given in \eqref{e2-1a-}, and $\bar u$ is the solution to \eqref{e1-3}.
According
to Assumption \ref{a1-2}, there exists a unique  function
 $\phi_1^\e:\R^d \times \T^d \to \R$
such that
\begin{equation}\label{e2-3aa}
-\sL \phi_1^\e(x,\cdot)(y)=F_1^\e(x,y)-\bar F_1^\e(x) \hbox{ for all } x\in \R^d \hbox{ and }y\in \T^d
\hbox{ with } \int_{\T^d}\phi_1^\e(x,y)\,dy=0,
\end{equation}
thanks to the fact that $\displaystyle\int_{\T^d}(F_1^\e(x,y)-\bar F_1^\e(x))\,dy=0$.
We note that the smoothness  of $\bar u$ depends on the function $h$ given in \eqref{e1-3}. As
 shown in Lemma \ref{l1-1} below, for $h\in \M_\beta$, $\bar u\in \M_{\beta_0}$, where $\beta_0:=\min\{\alpha,\beta\}$.
We
 then introduce the
 first order expansion
\begin{equation}\label{e2-7}
v_1^\e(x):=u_\e(x)-\bar u(x)-\e\left\langle\nabla \bar u(x),
\phi_0^\e\left(\e^{-1} x \right)\right\rangle-\varphi(\e)^{-1}\phi_1^\e\left(x,\e^{-1} x \right),\quad x\in \R^d.
\end{equation}

Inductively, for $n\ge 1$, $x\in \R^d$ and $y\in \T^d$, define
\begin{equation}\label{l2-2-2}
\begin{split}
G_{n}^\varepsilon(x,y)&=\varepsilon^{-d}\int_{\R^d} \delta\phi_n^\varepsilon(\cdot, y)(x;z)
K\left(y,y+\e^{-1} z\right)j \left(\e^{-1} z \right)\,dz+\e\left\langle \nabla_x \phi_n^\e\left(x,y\right),\Phi_\e\left(y\right)\right\rangle\\
&\quad+\varepsilon^{-d}\int_{\R^d} \delta^\varepsilon_2\phi_n^\varepsilon(x,y;z)K\left(y,y+\e^{-1} z\right)j \left(\e^{-1} z \right)\,dz
-\lambda \varphi(\e)^{-1}\phi_n^\e(x,y),\\
\bar G_n^\e(x)&=\int_{\T^d}G_n^\e(x,y)\,dy,\\
F_{n+1}^\e(x,y)&=\Upsilon_1^\e \psi_{n+1}^\e(x,y),\quad \bar F_{n+1}^\e(x)=\int_{\T^d} F_{n+1}^\e(x,y)\,dy.
\end{split}
\end{equation}
Here,  for every $n\ge1$, $\psi_{n+1}^\e:\R^d \to \R$ is the unique solution to
the following resolvent equation
\begin{equation}\label{e2-13}
\lambda \psi_{n+1}^\e(x)-\bar \sL_\alpha \psi_{n+1}^\e(x)=\bar F_n^\e(x)+\bar G_n^\e(x), \quad x\in \R^d,
\end{equation}
and
$\phi_n^\e:\R^d\times \T^d \to \R$ is the unique solution to the following equation
\begin{equation}\label{e2-14}
 -\sL \phi_{n+1}^\e(x,\cdot)(y)=G_n^\e(x,y)-\bar G_n^\e(x)+F_{n+1}^\e(x,y)-\bar F_{n+1}^\e(x),\quad x\in \R^d,\ y\in \T^d
\end{equation}
with $ \int_{\T^d}\phi_{n+1}^\e(x,y)\,dy
=0$;
while the operator $\delta_2^\e$ is defined by
\begin{equation}\label{l2-2-2a}
\delta_2^\varepsilon \phi_n^\e(x,y;z)=\phi_n^\e\left(x+z,y+\e^{-1} z\right)-
\phi_n^\e\left(x,y+\e^{-1} z\right)-\phi_n^\e(x+z,y)+\phi_n^\e(x,y),
\end{equation}
and $\delta\phi_1^\varepsilon(\cdot, y)(x;z)$ denotes the operator $\delta f(x;z)$ defined by \eqref{e1-4} acting on the variable $x\in \R^d$.
Note that the above definition \eqref{e2-14}  for $\phi_{n+1}^\e$ with $n\geq 1$ is consistent with \eqref{e2-3aa} for $\phi_1^\e$ if we set
  $G_0^\e(x,y)=\bar G_0^\e(x)=0$. According to \eqref{e2-14}, \eqref{l2-2-1}  and \eqref{e:add--} below,
$$
G_n^\e(x,x/\e) =\varphi(\e)^{-1}\sL_\e\phi_n^\e(\cdot,\cdot/\e)(x)-\sL\phi_n^\e(x,\cdot/\e)(x)
- \lambda \varphi(\e)^{-1}\phi_n^\e(x,x/\e).
$$
This indicates that $G^\e_n(x,y)$ reflects the homogenized error between the scaled operator $\sL_\e$ and the original operator $\sL$ acting on the function
 $\phi^\e_n(x,y)$.
For $n\ge 1$,  we will define the $(n+1)$-th expansion
$v_{n+1}^\e:\R^d\to \R$ as follows
\begin{equation}\label{e2-14a}
\begin{split} v_{n+1}^\e(x)
&=v_{n}^\e(x)-\psi_{n+1}^\e(x)-\e\left\langle \nabla \psi_{n+1}^\e(x), \phi_0^\e\left(\e^{-1} x \right)\right\rangle-
\varphi(\e)^{-1}\phi_{n+1}^\e\left(x,\e^{-1} x \right)\\
&=u_\e(x)-\bar u(x)-\sum_{k=2}^{n+1} \psi_k^\e(x)-\e\left\langle
\left(\nabla \bar u(x)+\sum_{k=2}^{n+1} \nabla \psi_k^\e(x)\right),\phi_0^\e\left(\e^{-1} x \right) \right\rangle-\varphi(\e)^{-1}\sum_{k=1}^{n+1} \phi_k^\e\left(x,\e^{-1} x \right).
\end{split}\end{equation}

\begin{remark}\label{r1-3}
Let us
give some explanation on the high order expansion $v_n^\e$. Here we only take
the subcritical $\alpha$-stable regime as an example. For $n=1$,
by \eqref{l2-1-1}, \eqref{e2-9a} and \eqref{l2-2-3} below, it holds that
\begin{align*}
\left(\lambda-\sL_\e\right)v_1^\e(x)=\bar F_1^\e\left(x\right)+G_1^\e\left(x,\e^{-1} x \right),
\end{align*}
which along with \eqref{e1-2a}  induces
\begin{align}\label{p2-1-1--}
\|v_1^\e\|_{L^2(\R^d;dx)}\le c_1\left(\xi_{1,\beta_0}(\e)+\xi_{2,\beta_0}(\e)\right),
\end{align}
where $\xi_{1,\beta_0}(\e)$ and $\xi_{2,\beta_0}(\e)$ are defined by \eqref{e1-8a} with $\beta_0:=\min\{\alpha,\beta\}$
for some $\beta>0$.

To obtain estimates for higher order, we will introduce the correctors $\psi_n^\e$ and $\phi_n^\e$ by induction for
$n\ge 2$
by
\begin{align*}
\left(\lambda-\sL_\e\right)\left(\psi_n^\e(\cdot)+\e\left\langle \nabla \psi_n^\e(\cdot), \phi_0^\e\left(\e^{-1}\cdot\right)\right\rangle\right)(x)
=\left(\lambda-\bar \sL_{\alpha}\right)\psi_n^\e(x)-F_n^\e\left(x,\e^{-1} x \right)=
\bar F_{n-1}^\e(x)+\bar G_{n-1}^\e(x)-F_n^\e\left(x,\e^{-1} x \right)
\end{align*}
and
\begin{align*}
\varphi(\e)^{-1}\left(\lambda-\sL_\e\right)\phi_n^\e\left(\cdot,
\e^{-1} \cdot
\right)(x)=
G_{n-1}^\e\left(x,\e^{-1} x \right)-\bar G_{n-1}^\e(x)+F_{n}^\e\left(x,\e^{-1} x \right)-\bar F_{n}^\e(x)
-G_{n}^\e\left(x,\e^{-1} x \right);
\end{align*}
see \eqref{e2-1} and \eqref{e:add--} below.
The correctors $\psi_n^\e$ and $\phi_n^\e$ are introduced to eliminate the lower-order terms $\bar F_{n-1}^\e(x)$ and $G_{n-1}^\e\left(x,\e^{-1} x \right)$
from the previous inductive step, respectively, which will in turn generate new terms
$\bar F_n^\e\left(x\right)$ and $G_n^\e\left(x,\e^{-1} x \right)$
that posses higher-order errors.
Furthermore, according to \eqref{e2-9a}, the error term $\xi_{1,\beta_0}(\e)$ in \eqref{p2-1-1--} corresponds to $\bar F_1^\e(x)$ (which are
$\xi_{3,\beta}(\e)$ and $\xi_{4,\beta}(\e)$ for the critical regime and the supercritical diffusive regime respectively; see \eqref{e3-9} and \eqref{t1-3-4}); on the other hand, by Lemma \ref{l2-2},
the error term $\xi_{2,\beta_0}(\e)$ is related to $G_1^\e\left(x,\e^{-1} x \right)$, which can be viewed the interaction between the fast and slow variables in
$\varphi(\e)^{-1}\sL_\e\phi_1^\e\left(\cdot,
\e^{-1} \cdot
\right)(x)$.
\end{remark}

For $\eps >0$, define
\begin{equation}\label{e1-7a}
\Pi_\e(z):=\left|\varphi(\e)\e^{-d}j \left(\e^{-1} z \right)-\frac{1}{|z|^{d+\alpha}}\right|,\quad z\in \R^d.
\end{equation}
For  $\eps>0$ and $\beta >0$, set
\begin{equation}\label{e1-7}
\begin{split}
&\gamma_{1,\beta}(\e):=\int_{\R^d}\left(1\wedge |z|^2\right)\Pi_\e(z)\,dz+\sup_{x\in \R^d}\left\{(1+|x|)^{d+\beta}\sup_{z\in \R^d:|z|\ge \frac{1+|x|}{2}}\Pi_\e(z)\right\},\\
&\gamma_2(\e):=\varphi(\e)\left(\e^2\int_{\{|z|\le 4\sqrt{d}\}}|z|^2j(z)\,dz+\e^2\int_{\{1<|z|\le \e^{-1}\}}|z|j(z)\,dz+\e\int_{\{|z|>\e^{-1}\}}j(z)\,dz\right),\\
&\gamma_{3,\beta}(\e):=\varphi(\e)
\left(\e^{-d+1}\sup_{x\in \R^d}\left\{(1+|x|)^{d+\beta}\sup_{z\in \R^d:|z|\ge \frac{1+|x|}{2}}j \left(\e^{-1} z \right)\right\}+
\e\int_{\{|z|>\e^{-1}\}}j(z)\,dz\right),\\
&\gamma_4(\e):= 1+\int_{\{1<|z|\le \e^{-1}\}}|z|j(z)\,dz.
\end{split}
\end{equation}
Define
\begin{equation}\label{e1-8a}
\begin{split}
  \xi_{1,\beta}(\e)&=(
\xi_0(\e)
+\gamma_{1,\beta}(\e))(1+\e\gamma_4(\e))+\left(\gamma_2(\e)+\gamma_{3,\beta}(\e)\right)\gamma_4(\e)
+\e\gamma_4(\e)+\e^2\varphi(\e)\gamma_4(\e)^2,\\
\xi_{2,\beta}(\e)&=\varphi(\e)^{-1}\e^{-1}\left(\gamma_2(\e)+\gamma_{3,\beta}(\e)\right)
+\e\gamma_4(\e)+\varphi(\e)^{-1},
\end{split}
\end{equation} where $\xi_0(\xi)$ is defined by \eqref{ex1-1-1}.

\begin{theorem}\label{thm1}
Suppose that Assumption {\rm\ref{a1-1}}{\rm(i)} and Assumption {\rm\ref{a1-2}} hold, and
that there exists $\beta>0$ such that
\begin{equation}\label{t1-1-1}
\lim_{\e \to 0}\xi_{1,\beta}(\e)=0,
\end{equation}
where $\xi_{1,\beta}(\e)$ is defined by \eqref{e1-8a}. Fix this $\beta$.
For any $n\ge1$, let $v_n^\e$ be defined by \eqref{e2-14a} above with $h\in \M_{\beta}$. Then, for any $n\ge1$, there exists a constant $C_0(n)>0$ such that for all $\e>0$,
\begin{equation}\label{p2-1-1}
\begin{split}
\|v_{n}^\e\|_{L^2(\R^d;dx)}\le
C_0(n)\left(\xi_{1,\beta_0}(\e)+\xi_{2,\beta_0}(\e)\right)^n,
\end{split}
\end{equation}
where $\beta_0:=\min\{\alpha,\beta\}$.
\end{theorem}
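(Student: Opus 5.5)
We argue by induction on $n$, working throughout in $L^2(\R^d;dx)$ and using the resolvent bound \eqref{e1-2a} for $\lambda-\sL_\e$. The key identity to establish is
\begin{equation*}
(\lambda-\sL_\e)v_n^\e(x)=\bar F_n^\e(x)+G_n^\e\left(x,\e^{-1}x\right),\qquad n\ge1 .
\end{equation*}
Granting it, we get $\|v_n^\e\|_{L^2}\le \lambda^{-1}\big(\|\bar F_n^\e\|_{L^2}+\|G_n^\e(\cdot,\cdot/\e)\|_{L^2}\big)$. The theorem then reduces to showing
\begin{equation*}
\|\bar F_n^\e\|_{L^2}+\|G_n^\e(\cdot,\cdot/\e)\|_{L^2}\le C(n)\,\xi(\e)^n,\qquad \xi(\e):=\xi_{1,\beta_0}(\e)+\xi_{2,\beta_0}(\e).
\end{equation*}

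\textbf{Step 1: the case $n=1$.} First we apply $\lambda-\sL_\e$ to $\bar u+\e\langle\nabla\bar u,\phi_0^\e(\cdot/\e)\rangle$. We split the scaled kernel $\varphi(\e)\e^{-d}j(\e^{-1}z)K$ as $|z|^{-d-\alpha}\bar K$ plus the three discrepancies $\Lambda^1_\e,\Lambda^2_\e,\Theta^1_\e$, and we expand the product using the carr\'e du champ term $\Gamma_\e$. The corrector equation \eqref{e2-2} cancels the $O(\e^{-1})$ drift $\varphi(\e)\e\langle\nabla\bar u,\Phi_\e\rangle$, and \eqref{e1-3} cancels $(\lambda-\bar\sL_\alpha)\bar u-h$. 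What remains is exactly $-F_1^\e(x,x/\e)=-\Upsilon^1_\e\bar u(x,x/\e)$.

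Next we use \eqref{e2-3aa}. Since $\varphi(\e)^{-1}\sL_\e[\phi(\cdot,\cdot/\e)]=\sL\phi(x,\cdot)(x/\e)+G$-type remainder (the displayed identity after \eqref{e2-14}), we obtain $\varphi(\e)^{-1}(\lambda-\sL_\e)\phi_1^\e(\cdot,\cdot/\e)=F_1^\e-\bar F_1^\e-G_1^\e$. Adding the two computations gives the key identity for $n=1$.

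For the inductive step we repeat the same computation with $\psi_{n+1}^\e$ in place of $\bar u$. Equation \eqref{e2-13} replaces $h$ by $\bar F_n^\e+\bar G_n^\e$, and \eqref{e2-14} absorbs the oscillating parts. The telescoping \eqref{e2-14a} then yields the identity with $n+1$ in place of $n$, exactly as sketched in Remark \ref{r1-3}.

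\textbf{Step 2: quantitative bounds, propagated inductively.} We propagate the bounds in weighted norms, carried along as an induction hypothesis:
\begin{equation*}
\|\psi_{n+1}^\e\|_{\M_{\beta_0},k}\le C(n,k)\,\xi(\e)^n,\qquad \sup_y\|\phi_n^\e(\cdot,y)\|_{\M_{\beta_0},k}+\sup_y\|\nabla_y\phi_n^\e(\cdot,y)\|_{\M_{\beta_0},k}\le C(n,k)\,\xi(\e)^{n-1}\xi_{1,\beta_0}(\e).
\end{equation*}
These follow from three ingredients.

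\emph{(a) Estimates for $\Upsilon^1_\e f$.} For $f\in\M_{\beta_0}$ we prove $|\Upsilon^1_\e f(x,y)|\le C\,\xi_{1,\beta_0}(\e)\,(1+|x|)^{-d-\beta_0}\sum_{k\le 3}\|f\|_{\M_{\beta_0},k}$, and similarly for its $x$-derivatives. The terms are handled as follows.
\begin{itemize}
\item $\Lambda^1_\e$ is controlled by $\gamma_{1,\beta}$. We split $|z|\le1$ (Taylor), $1<|z|<(1+|x|)/2$, and $|z|\ge(1+|x|)/2$ (the sup term).
\item $\Lambda^2_\e$ has the oscillating factor $K(y,y+z/\e)-\bar K(y)$ of zero mean in the fast variable. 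We integrate it against $\delta f$ by periodic averaging on cells of size $\e$, which produces $\xi_0(\e)$; the three cases $\alpha\gtrless1$ arise from the integrability of $|z|^{1-d-\alpha}$ near $0$ and $\infty$.
\item $\Gamma_\e$ and the $\e^2\varphi(\e)$ term are bounded through $\gamma_2,\gamma_{3,\beta},\gamma_4$, using \eqref{e2-2a} and $\|\Phi_\e\|_\infty\lesssim\gamma_4(\e)$.
\end{itemize}
The term $\Theta^1_\e$ needs separate treatment, since it is not small pointwise. Its average over $y$ vanishes, so it contributes nothing to $\bar F$; inside $F-\bar F$ it is harmless because it is solved away by $\phi_{n+1}^\e$ with a factor $\varphi(\e)^{-1}$.

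\emph{(b) Estimates for $G_n^\e$.} We show $\|G_n^\e(\cdot,\cdot/\e)\|_{L^2}\lesssim \xi_{2,\beta_0}(\e)\,\|\phi_n^\e\|$. The factor $\varphi(\e)^{-1}\e^{-1}(\gamma_2+\gamma_{3})$ comes from the $\delta$ and $\delta_2^\e$ terms after using $C^1$ bounds in $y$ from Assumption \ref{a1-2}. The factor $\e\gamma_4$ comes from the drift term, and $\varphi(\e)^{-1}$ from the $\lambda$-term.

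\emph{(c) Regularity of $\psi_{n+1}^\e$.} Lemma \ref{l1-1}-type regularity (the resolvent of $\bar\sL_\alpha$ maps $\M_{\beta}$ into $\M_{\min\{\alpha,\beta\}}$) gives the bound on $\psi_{n+1}^\e$. Assumption \ref{a1-2}, applied for each $x$ and differentiated in $x$ (linear dependence), gives the bound on $\phi_{n+1}^\e$.

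\textbf{Main obstacle.} The hard part is keeping the decay class $\M_{\beta_0}$ stable under iteration while still gaining a full factor $\xi(\e)$ at each step. Two issues interact here:
\begin{itemize}
\item The resolvent of $\bar\sL_\alpha$ cannot improve decay beyond $\alpha$. This is exactly why we fix $\beta_0=\min\{\alpha,\beta\}$ once and for all: since $\beta_0\le\alpha$, the class no longer degrades.
\item The estimate of $\Lambda^2_\e$ requires a genuine two-scale averaging argument (a cell-by-cell Riemann-sum error), uniformly in the slow variable and for all derivatives.
\end{itemize}
I would first prove a single lemma bounding $\Upsilon^1_\e$ and the $G$-terms as linear operators $\M_{\beta_0}\to$ weighted $L^\infty$ with norm $\lesssim\xi(\e)$. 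Iterating that lemma then gives the bound $C_0(n)\,\xi(\e)^n$ directly.
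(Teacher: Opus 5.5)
Your route is the paper's: the residual identity $(\lambda-\sL_\e)v_n^\e=\bar F_n^\e+G_n^\e(\cdot,\e^{-1}\cdot)$, the $L^2$ bound \eqref{e1-2a}, and an induction in the seminorms $\|\cdot\|_{\M_{\beta_0},k}$ driven by Lemma \ref{l1-1}, Assumption \ref{a1-2} and the $\xi_{2,\beta_0}$-gain of Lemma \ref{l2-2}. However, the induction hypothesis on $\phi_n^\e$ in your Step 2 is false, already at $n=1$.

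Since $\bar\sL_\alpha f(x)=\bar K\int_{\R^d}\delta f(x;z)|z|^{-d-\alpha}\,dz$, one has exactly
\[
\Theta_\e^1 f(x,y)=\frac{\bar K(y)-\bar K}{\bar K}\,\bar\sL_\alpha f(x).
\]
This term is independent of $\e$ and of order one whenever $\bar K(\cdot)$ is non-constant. Several consequences follow:
\begin{itemize}
\item $\Upsilon_\e^1$ is not an operator of norm $\lesssim\xi(\e)$, as you claim in (a) and in your closing sentence. It is only $O(1)$ (Lemma \ref{l1-6}), and the $\xi_{1,\beta_0}$-gain holds only for its $y$-average.
\item Hence $F_1^\e-\bar F_1^\e=\Theta^1_\e\bar u+O(\xi_{1,\beta_0}(\e))$, and $\phi_1^\e$ is of order one, not $O(\xi_{1,\beta_0}(\e))$.
\item For general $n$, your bound on $\phi_n^\e$ is too optimistic by a factor $\xi_{1,\beta_0}(\e)$. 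The reason is that $F_n^\e-\bar F_n^\e$ contains $\Theta^1_\e\psi_n^\e$, which is as large as $\psi_n^\e$ itself.
\end{itemize}

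What can actually be propagated, with your $\xi=\xi_{1,\beta_0}+\xi_{2,\beta_0}$, is the asymmetric scheme of \eqref{e2-4}, \eqref{e2-9a} and \eqref{p2-1-4}:
\begin{align*}
&\sup_{y\in\T^d}\|F_n^\e(\cdot,y)\|_{\M_{\beta_0},k}+\sup_{y\in\T^d}\big(\|\phi_n^\e(\cdot,y)\|_{\M_{\beta_0},k}+\|\nabla_y\phi_n^\e(\cdot,y)\|_{\M_{\beta_0},k}\big)\lesssim \xi(\e)^{n-1},\\
&\|\bar F_n^\e\|_{\M_{\beta_0},k}\lesssim \xi_{1,\beta_0}(\e)\,\xi(\e)^{n-1},\qquad \sup_{y\in\T^d}\|G_n^\e(\cdot,y)\|_{\M_{\beta_0},k}+\|\bar G_n^\e\|_{\M_{\beta_0},k}\lesssim \xi_{2,\beta_0}(\e)\,\xi(\e)^{n-1}.
\end{align*}
Together with this one gets $\|\psi_{n+1}^\e\|_{\M_{\beta_0},k}\lesssim\|\bar F_n^\e\|+\|\bar G_n^\e\|\lesssim\xi(\e)^n$, losing finitely many derivatives per step.

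The $\Theta$-term is harmless, but not because of the factor $\varphi(\e)^{-1}$ in front of $\phi_{n+1}^\e$ in $v_{n+1}^\e$. It is harmless because $\phi_{n+1}^\e$ re-enters the residual only through $G_{n+1}^\e$, whose defining integrals carry the full gain $\xi_{2,\beta_0}(\e)$ of Lemma \ref{l2-2}. The gain for $\bar F_{n+1}^\e$ comes separately from $\int_{\T^d}\Theta^1_\e\psi_{n+1}^\e(x,y)\,dy=0$.

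With this correction, the smallness of $G_n^\e$ comes entirely from the $\xi_{2,\beta_0}$ prefactor, not from smallness of $\phi_n^\e$. The recursion then closes to $\|\bar F_n^\e\|_{L^2}+\|G_n^\e(\cdot,\e^{-1}\cdot)\|_{L^2}\lesssim\xi(\e)^n$, and your argument becomes the paper's.
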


\begin{remark}\label{r1-2}
\begin{itemize}
\item[{\rm(i)}] Note that $\xi_{1,\beta_1}(\e)\le \xi_{1,\beta_2}(\e)$ and $\M_{\beta_2}\subset \M_{\beta_1}$ for
every $\beta_1\le \beta_2$.
Hence, when \eqref{t1-1-1}
holds for some $\beta>0$, it
holds
for all $\beta'\in (0,\beta]$.
As explained in Remark \ref{r1-3}, the error term $\xi_{1,\beta_0}(\e)$ is induced by
the function $\bar F_1^\e(x)$; see \eqref{e2-9a} below. Therefore, it is natural to assume that \eqref{t1-1-1} is satisfied; otherwise, $\lim_{\e \to 0}\|u_\e-\bar u\|_{L^2(\R^d;dx)}=0$ may not hold true.

\item[{\rm(ii)}] As indicated by Remarks \ref{r1-1} and \ref{r2-1} below, if there is $\theta\in [0,1)$ such that
$$
\int_{\{|z|\le 1\}}|z|^{1+\theta}j(z)\,dz<\infty,
$$
then one can relax Assumption \ref{a1-2}
to
 the following assumption.

\noindent \emph{Assumption {\rm(H):}
For every $f\in C(\T^d)$ with $\int_{\T^d}f(y)\,dy=0$, there exists a unique multivariate 1-periodic
function $\phi_f\in C^ \theta(\T^d)\cap
\D(\sL)$ such that $\sL \phi_f=f$ on $\T^d
$ in the  pointwise sense having $\int_{\T^d}\phi_f(y)\,dy=0$. Moveover,
$\|\phi_f\|_{\theta}\le C_0\|f\|_\infty$,
where the norm $\|\phi_f\|_{\theta}$ is defined by \eqref{r1-1-1a} below and $C_0>0$ is independent of $f$.}

\noindent
Under
Assumption {\rm\ref{a1-1}}{\rm(i)}, Assumption {\rm (H)} and \eqref{t1-1-1}, we can prove
\begin{equation}\label{r1-2-1}
\|v_{n}^\e\|_{L^2(\R^d;dx)}\le
C_0(n)(\xi_{1,\beta_0}(\e)+\tilde \xi_{2,\beta_0}(\e))^n,\ n\ge 1,\ \e\in (0,1),
\end{equation}
where $\tilde \xi_{2,\beta_0}$ is defined by \eqref{r2-1-3} below.
We note that, if there
exist
constants
 $c_0>0$ and $\alpha_0\in (0,1]$ such that $j(z)=\frac{c_0}{|z|^{d+\alpha_0}}$ for all $z\in \R^d$ with $|z|\le 1$, then Assumption {\rm(H)} above holds true.
\end{itemize}
\end{remark}

\subsection{Critical regime}\label{S:2.2}
In this subsection, we assume that Assumption {\rm\ref{a1-1}}{\rm(ii)} and Assumption {\rm\ref{a1-2}} hold.
We also make the following assumption.

\begin{assumption}\label{a2-1} \it
There exist a function $a_0:\R^d \to [0,\infty)$ and a constant $C>0$ such that for all $z_1,z_2\in \R^d$ with $|z_1|\wedge |z_2|\ge 4$ and $|z_1-z_2|\le 1$,
\begin{equation}\label{a2-1-1}
|j(z_1)-j(z_2)|\le Ca_0(z_1)|z_1-z_2|
\end{equation}
\end{assumption}

For every $f\in \M_{\beta}$, $x\in \R^d$ and $y\in \T^d$, define
\begin{equation}\label{e3-3a}
\begin{split}
 \Lambda_\e^3 f(x,y)&=\e^{-d}\varphi(\e)\int_{\R^d}\delta f(x;z)\left(K\left(y,y+\e^{-1} z \right)-\bar K(y)\right)j \left(\e^{-1} z \right)\,dz,\\
\Lambda_\e^4 f(x,y)&=\bar K(y)\left(\e^{-d}\varphi(\e)\int_{\R^d}\delta f(x;z)j \left(\e^{-1} z \right)dz-\frac{1}{2}
\left\langle \nabla^2\bar u(x), A\right\rangle\right),\\
\Theta_\e^2 f(x,y)&=\frac{1}{2}\left\langle \nabla^2 f(x), \bar K(y)A-\bar A\right\rangle,\\
\Upsilon_\e^2 f(x,y)&=\sum_{i=3}^4\Lambda_{\e}^i f(x,y)+\Theta_{\e}^2f(x,y)+
\e\sum_{i=1}^d\Gamma_\e\left(\partial_{x_i}f,\phi_{0,i}^\e\right)\left(x,y\right)\\
&\quad+\e\left\langle \bar \sL_2 \left(\nabla f\right)(x)+\sum_{i=3}^4\Lambda_\e^i\left(\nabla f\right)
\left(x,y\right)+\Theta_\e^2\left(\nabla f\right)
\left(x,y\right), \phi_0^\e\left(y\right)\right\rangle\\
&\quad+\e^{2}\varphi(\e)\left\langle
\nabla^2 f(x), \phi_0^\e\left(y\right)\otimes \Phi_\e\left(y\right)\right\rangle-\lambda\e
\left\langle \nabla f(x), \phi_0^\e(y)\right\rangle,
\end{split}
\end{equation}
where $\delta f(x;z)$ is defined
by \eqref{e1-4}, $A$ is defined by \eqref{a1-1-3}, $\bar A$ is given in
\eqref{e2-1a-}, and some other notations are those given in Subsection \ref{S:2.1}.
In particular, $\Upsilon_\e^2f(x,y)$ for the critical case is defined in the same way as
$\Upsilon_1^\e f(x,y)$ in \eqref{l2-1-3a} for the subcritical $\alpha$-stable regime with the operators $\Lambda_\e^1$, $\Lambda_\e^2$, $\Theta_\e^1$ and $\bar \sL_\alpha$ replaced by
$\Lambda_\e^3$, $\Lambda_\e^4$, $\Theta_\e^2$ and $\bar \sL_2$ respectively.

Let $\bar u$ be the solution to \eqref{e1-3}. Define
\begin{equation}\label{e3-4}
F_1^\e(x,y) =\Upsilon_\e^2 \bar u(x,y),\,\,\bar F_1^\e(x)=\int_{\T^d}F_1^\e(x,y)dy,\quad x\in \R^d,\ y\in \T^d.
\end{equation}
Then, with $F_1^\e$ and $\bar F_1^\e$ as \eqref{e3-4} at hand,  we can define $v_1^\e:\R^d \to \R$, $\psi_1^\e:\R^d\times\T^d\to \R^d$, and, inductively, for all $n\ge1$, define
$G_n^\e:\R^d\times \T^d \to \R$,
$\bar G_n^\e:\R^d \to \R$, $F_{n+1}^\e:\R^d \times \T^d \to \R$,
$\bar F_{n+1}^\e:\R^d \to \R$, $\phi_n^\e:\R^d\times \T^d \to \R$ and $\psi_{n+1}^\e:\R^d \to \R$
by the same way as those for \eqref{e2-3aa}--
\eqref{e2-14}
in  the subcritical $\alpha$-stable regime. The only
difference is that here we use
$\Upsilon_\e^2$ instead of $\Upsilon_\e^1$. With all the notations above, we can define $v^\e_n:\R^d\to \R$ for all $n\ge1$ as \eqref{e2-14a}.

For fixed $\beta>0$, let
\begin{equation}\label{e3-5a}
\begin{split}
&\gamma_5(\e):=\varphi(\e)\left(\e^2\int_{\{1\le |z|\le \e^{-1}\}}|z|^2a_0(z)\,dz+\int_{\{|z|>\e^{-1}\}}a_0(z)\,dz\right),\\
&\gamma_{6,\beta}(\e):=\varphi(\e)\left(\e^{-d}\sup_{x\in \R^d}\left\{(1+|x|)^{d+\beta} \sup_{z\in \R^d:|z|\ge \frac{1+|x|}{2}}a_0\left(\e^{-1} z \right)\right\}
+\int_{\{|z|>\e^{-1}\}}a_0(z)\,dz\right),\\
&\gamma_7(\e):=\left|\varphi(\e)\int_{\{|z|\le \e^{-1}\}}(z\otimes z)j(z)\,dz-A\right|,\\
&\gamma_8(\e):=\varphi(\e)\e^3\int_{\{|z|\le \e^{-1}\}}|z|^3j(z)\,dz,
\end{split}
\end{equation}
and
\begin{equation}\label{e3-6}
\xi_{3,\beta}(\e):=(\gamma_2(\e)+\e^{-1}\gamma_{3,\beta}(\e)
+\gamma_5(\e)+\gamma_{6,\beta}(\e)+\gamma_7(\e)+\gamma_8(\e))(1+\e\gamma_4(\e))+\e\gamma_4(\e)+\e^2\varphi(\e)\gamma_4(\e)^2.
\end{equation}
where $A$ is the $d\times d$ matrix given in \eqref{a1-1-3}, $a_0:\R^d \to \R_+$ is the function given in \eqref{a2-1-1}, and
$\gamma_2(\e)$, $\gamma_{3,\beta}(\e)$ and $\gamma_4(\e)$ are defined in \eqref{e1-7}.

\begin{theorem}\label{thm2}
Suppose that Assumption {\rm\ref{a1-1}}{\rm(ii)}, Assumption {\rm\ref{a1-2}} and Assumption {\rm\ref{a2-1}} hold, and that
there exists $\beta>0$ such that
\begin{equation}\label{t1-2-1}
\lim_{\e \to 0}\xi_{3,\beta}(\e)=0,
\end{equation}
where $\xi_{3,\beta}(\e)$ is defined by \eqref{e3-6}. Fix this $\beta$, and let $v_n:\R^d\to \R$ be defined as \eqref{e2-14a} with $h\in \M_{\beta}$.
Then, for any $n\ge1$, there is $C_0(n)>0$ such that for all $\e\in (0,1)$,
\begin{equation}\label{t1-2-2}
\|v_{n}^\e\|_{L^2(\R^d;dx)}\le
C_0(n)\left(\xi_{2,\beta}(\e)+\xi_{3,\beta}(\e)\right)^n,
\end{equation}
where $\xi_{2,\beta}(\e)$ and $\xi_{3,\beta}(\e)$ are defined by \eqref{e1-8a} and \eqref{e3-6}, respectively.
\end{theorem}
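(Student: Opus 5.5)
We mirror the subcritical argument of Theorem \ref{thm1}, with $\Upsilon_\e^1$ replaced by $\Upsilon_\e^2$ throughout, and run an induction on $n$ driven by the $L^2$ resolvent bound \eqref{e1-2a}. The backbone is the identity
\[
(\lambda-\sL_\e)v_n^\e(x)=\bar F_n^\e(x)+G_n^\e\left(x,\e^{-1}x\right).
\]
This identity is algebraic in the construction, so it does not depend on the regime. For $n=1$, we apply $\lambda-\sL_\e$ to $\bar u+\e\langle\nabla\bar u,\phi_0^\e(\e^{-1}\cdot)\rangle$ and expand. Splitting the kernel at $|z|\le 1$ and $|z|>1$, the first-order terms cancel against $\sL\phi_0^\e=-\Phi_\e$. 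We then compare $\e^{-d}\varphi(\e)\int\delta f(x;z)K(y,y+\e^{-1}z)j(\e^{-1}z)\,dz$ with $\bar\sL_2 f(x)$ through the chain $K(y,y+\e^{-1}z)\to\bar K(y)\to\bar K$ and $\e^{-d}\varphi(\e)\int\delta f\, j\to\frac12\langle\nabla^2 f,A\rangle$. This chain produces exactly $\Lambda^3_\e,\Lambda^4_\e,\Theta^2_\e$, so that $(\lambda-\sL_\e)(\cdots)=h-F_1^\e(x,\e^{-1}x)$. Subtracting $\varphi(\e)^{-1}\phi_1^\e(x,\e^{-1}x)$ and using \eqref{e2-3aa}, the oscillating part $F_1^\e-\bar F_1^\e$ is replaced by $G_1^\e$. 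The inductive step is identical, using \eqref{e2-13}--\eqref{e2-14}.

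Given the identity, \eqref{e1-2a} reduces everything to bounding $\|\bar F_n^\e\|_{L^2}$ and $\sup_y\|G_n^\e(\cdot,y)\|_{L^2}$. The $L^2$ norm of $x\mapsto G(x,x/\e)$ is controlled by $\|\sup_y|G(\cdot,y)|\|_{L^2}$. We carry along the weighted quantity
\[
N_k(f):=\sup_{x}(1+|x|)^{d+\beta}\max_{j\le k}|\nabla^jf(x)|,
\]
and prove the following statements.

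\emph{(a) Estimate for $\Upsilon^2_\e$.} For $f\in\M_\beta$,
\[
\Big|\int_{\T^d}\Upsilon^2_\e f(x,y)\,dy\Big|\le C\,\xi_{3,\beta}(\e)\,(1+|x|)^{-d-\beta}N_k(f).
\]
Here the individual terms come from the following sources.
\begin{itemize}
\item $\gamma_7$ comes from $\Lambda^4_\e$ near the origin.
\item $\gamma_8$ comes from the third-order Taylor remainder.
\item $\gamma_2,\gamma_3$ come from the large jumps $|z|>\e^{-1}$ and the cutoff in $\delta f$.
\item $\Theta^2_\e$ integrates to zero in $y$, because $\int\bar K(y)\,dy=\bar K$.
\item $\Lambda^3_\e$ requires averaging the oscillation of $K(y,y+\e^{-1}z)-\bar K(y)$ in $z$.
\end{itemize}
For $\Lambda^3_\e$, we partition $\R^d$ into $\e$-cells and compare $j(\e^{-1}z)$ with its value at the cell center via Assumption \ref{a2-1}. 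The Lipschitz error of $j$ gives $\gamma_5$, and the weighted decay at large $|z|$ gives $\gamma_{6,\beta}$. The $\Gamma_\e$ and $\phi_0^\e\otimes\Phi_\e$ terms give $\e\gamma_4(\e)$ and $\e^2\varphi(\e)\gamma_4^2$, using $\|\Phi_\e\|_\infty\le c\gamma_4(\e)$ and \eqref{e2-2a}. The factor $(1+\e\gamma_4)$ accounts for the terms multiplied by $\e\phi^\e_0$.

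\emph{(b) Pointwise estimate for $\Upsilon^2_\e$.} Without averaging in $y$,
\[
|\Upsilon^2_\e f(x,y)|\le C(1+|x|)^{-d-\beta}N_k(f).
\]
Consequently, by Assumption \ref{a1-2} applied for each fixed $x$, and after differentiating in $x$, the correctors $\phi_n^\e$ and their $x$-derivatives obey the same weighted bounds.

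\emph{(c) Estimate for $G_n^\e$.}
\[
|G_n^\e(x,y)|\le C\,\xi_{2,\beta}(\e)(1+|x|)^{-d-\beta}N_{k+2}(\phi_n^\e).
\]
This uses $C^1$ control of $\phi_n^\e$ in $y$ for the mixed difference $\delta^\e_2$, and $\e^{-d}\int(1\wedge|z|^2)j(\e^{-1}z)\,dz\lesssim\varphi(\e)^{-1}(\gamma_2+\gamma_3)\e^{-1}$ after rescaling.

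\emph{(d) Regularity of the slow correctors.} Since $\bar\sL_2$ is a constant-coefficient elliptic operator, the resolvent equation \eqref{e2-13} preserves the weighted class $\M_\beta$ with norms $N_k$ (the analogue of Lemma \ref{l1-1}; no $\beta_0$ loss occurs for the Gaussian case). This gives $N_k(\psi_{n+1}^\e)\le C(\xi_2+\xi_3)^n$.

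Combining (a)--(d) inductively yields $N_k(\bar F_n^\e)+N_k(G_n^\e)\le C(n)(\xi_{2,\beta}+\xi_{3,\beta})^n$. Since $(1+|x|)^{-d-\beta}\in L^2$, this gives \eqref{t1-2-2}. The number of derivatives $k$ grows with $n$, which is harmless because $h\in C_b^\infty$.

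The main obstacle is step (a) for $\Lambda^3_\e$ and $\Lambda^4_\e$: extracting the homogenization gain from the oscillating factor $K(y,y+\e^{-1}z)$ against the non-integrable-at-scale kernel $\e^{-d}\varphi(\e)j(\e^{-1}z)$. In the critical regime, no limit jump kernel absorbs it. One must use cell averaging together with Assumption \ref{a2-1}, while also keeping the weighted decay in $x$, which requires splitting $|z|\ge(1+|x|)/2$ separately (this is the origin of $\gamma_{6,\beta}$). I would first try to reuse the subcritical bounds on $\Lambda^2_\e$ verbatim, replacing $|z|^{-d-\alpha}$ by $a_0$-Lipschitz cell averaging.
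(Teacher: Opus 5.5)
Your proposal is correct and follows essentially the same route as the paper. The shared ingredients are the splitting \eqref{e3-7} of $\sL_\varepsilon$ into $\bar\sL_2+\Lambda^3_\varepsilon+\Lambda^4_\varepsilon+\Theta^2_\varepsilon$ plus the drift, the pointwise and $y$-averaged bounds of Lemmas \ref{l3-2}--\ref{l3-4} (with $\Theta^2_\varepsilon$ dropping out of $\bar F_n^\varepsilon$), the $\xi_{2,\beta}$-gain for $G_n^\varepsilon$ as in \eqref{l2-2-3}, Lemma \ref{l3-1} for $\psi^\varepsilon_{n+1}$, and the identity $(\lambda-\sL_\varepsilon)v_n^\varepsilon=\bar F_n^\varepsilon+G_n^\varepsilon(\cdot,\varepsilon^{-1}\cdot)$ combined with \eqref{e1-2a}.

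One bookkeeping correction: $\varepsilon\Gamma_\varepsilon(\partial_{x_i}f,\phi^\varepsilon_{0,i})$ contributes $(\gamma_2(\varepsilon)+\gamma_{3,\beta}(\varepsilon))\gamma_4(\varepsilon)$, not $\varepsilon\gamma_4(\varepsilon)$. The $\varepsilon\gamma_4$ terms instead come from pairing $\bar\sL_2\nabla f$, $\Theta^2_\varepsilon\nabla f$ and $\lambda\nabla f$ with $\varepsilon\phi_0^\varepsilon$; note that the $\Theta^2_\varepsilon\nabla f$ pairing does not average to zero in $y$. The $\Gamma_\varepsilon$ contribution is still absorbed into $\xi_{3,\beta}$, because \eqref{a1-1-2} forces $\int_{\{|z|>1\}}|z|j(z)\,dz<\infty$, so $\gamma_4$ is bounded in the critical regime.
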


\subsection{Supercritical diffusive regime} \label{S:2.3}

In this subsection, we assume that Assumption {\rm\ref{a1-1}}{\rm(iii)} and Assumption {\rm\ref{a1-2}} hold.
As mentioned before, in this case we take $\varphi(\e)=\e^{-2}$ and write the solution to \eqref{e2-2} as $\phi_0$ (since it is independent of $\e$).
For every $f\in \M_{\beta}$,  set
$$
\hat \delta f(x;z):=f(x+z)-f(x)-\langle \nabla f(x), z\rangle,\quad x,z\in \R^d,
$$
and, for any $x\in \R^d$ and $y\in \T^d$, define
\begin{equation}\label{e2-3-4}
\begin{split}
\Lambda_\e^5 f(x,y)&=\e^{-2}\int_{\R^d}\hat\delta f(x;\e z)K(y,y+z)j(z)\,dz-\frac{1}{2}
\left\langle \nabla^2 f(x), \int_{\R^d}(z\otimes z) K(y,y+z)j(z)\,dz\right\rangle,\\
\Lambda_\e^6 f(x,y)&=\e^{-1}\sum_{i=1}^d\Gamma_\e\left(\partial_{x_i}f, \phi_{0,i}\right)(x,y)-
\left\langle \nabla^2 f(x), \int_{\R^d} z\otimes \left(\phi_0(y+z)-\phi_0(y)\right)K(y,y+z)j(z)\,dz\right\rangle,\\
\Lambda_\e^7 f(x,y)&=\e^{-1}\left\langle\hbox{p.v.}
\int_{\R^d}\left(\nabla f(x+\e z)-\nabla f(x)\right)K(y,y+z)j(z)\,dz, \phi_0(y)\right\rangle -\left\langle \nabla^2 f(x),\Phi_0(y)\otimes \phi_0(y)\right\rangle,\\
\Theta_\e^3 f(x,y)&=\frac{1}{2}\left\langle \nabla^2 f(x), \bar A_0(y)-\bar A_0\right\rangle.
\end{split}
\end{equation}
Here, $\Gamma_\e(f,\phi)$ is defined by
\eqref{e1-6}, $\bar A_0$ by \eqref{e2-3-2}, and
\begin{equation}\label{e2-3-7a}
\bar A_0(y)=\int_{\R^d}(z\otimes z+2 z\otimes \left(\phi_0(y+z)-\phi_0(y)\right)
)K(y,y+z)j(z)\,dz+2\Phi_0(y)\otimes \phi_0(y),\quad y\in \T^d,
\end{equation}
where $\Phi_0$ is defined in \eqref{e2-1a}
(in the supercritical diffusive regime we also write $\Phi_\e$ defined by \eqref{e2-1a} as $\Phi_0$ since it is independent of $\e$),
and $\phi_0$ is the solution to \eqref{e2-2}. With these terms above, set
\begin{equation}\label{e2-3-8}
\Upsilon_\e^3 f(x,y):=\sum_{i=5}^7 \Lambda_\e^i f(x,y)+\Theta_\e^3 f(x,y)-\lambda \e\left\langle \nabla f(x), \phi_0(y)\right\rangle,
\quad x\in \R^d,\ y\in \T^d.
\end{equation}
For the solution $\bar u$ to \eqref{e1-3}, set
\begin{equation}\label{e2-3-9}
F_1^\e(x,y):=\Upsilon_\e^3 \bar u(x,y),\,\, \bar F_1^\e(x):=\int_{\T^d}F_1^\e(x,y)\,dy,\quad x\in \R^d,\ y\in \T^d.
\end{equation}
By Assumption \ref{a1-2} again, we can find a unique function $\phi_1^\e:\R^d \times \T^d \to \R$
such that
\begin{equation}\label{e2-3-10}
-\sL \phi_1^\e(x,\cdot)(y)=F_1^\e(x,y)-\bar F_1^\e(x) \hbox{  for } x\in \R^d,\ y\in \T^d
\hbox{ with }\ \int_{\T^d}\phi_1^\e(x,y)\,dy=0,
\end{equation}
due to the fact $\int_{\T^d}(F_1^\e(x,y)-\bar F_1^\e(x))\,dy=0$.
Now we can define the first order expansion
$$
v_1^\e(x)=u_\e(x)-\bar u(x)-\e\left\langle\nabla \bar u(x),
\phi_0\left(\e^{-1} x \right)\right\rangle-\e^2\phi_1^\e\left(x,\e^{-1} x \right),\quad x\in \R^d.
$$

Furthermore, by induction, for every $n\ge 1$, we can define $G_n^\e:\R^d \times \T^d \to \R$ and $\bar G_n^\e:\R^d \to \R$
by the same way as  \eqref{l2-2-2} with $\Phi_\e=\Phi$ given by \eqref{e2-1a}, and
define
\begin{equation}\label{e2-3-11}
F_{n+1}^\e(x,y)=\Upsilon_3^\e \psi_{n+1}^\e(x,y),\,\,  \bar F_{n+1}^\e(x)=\int_{\T^d} F_{n+1}^\e(x,z)\,dz,\quad x\in \R^d,\ y\in \T^d.
\end{equation}
Here, $\psi_{n+1}^\e$ and $\phi_{n+1}^\e$ are the solutions to
\eqref{e2-13} and \eqref{e2-14} respectively with $\bar \sL_\alpha$ replaced by $\bar \sL_{>2}$.  With all the notations above, we can define $v^\e_n:\R^d\to \R$ for all $n\ge1$ as \eqref{e2-14a}.

For a fixed $\beta>0$, define
\begin{equation}\label{e2-3-12}
 \xi_{4,\beta}(\e)=\e+\e^{-1}\gamma_{3,\beta}(\e)
+\gamma_9(\e),
\end{equation}where $$\gamma_9(\e)=\e\int_{\{|z|\le \e^{-1}\}}|z|^3j(z)\,dz+
\int_{\{|z|>\e^{-1}\}}|z|^2j(z)\,dz.$$

\begin{theorem}\label{thm3}
Suppose that Assumption {\rm\ref{a1-1}(iii)} and Assumption {\rm\ref{a1-2}} hold, and that there exists $\beta>0$ such that
\begin{equation}\label{t1-3-1}
\lim_{\e \to 0}\xi_{4,\beta}(\e)=0.
\end{equation} Fix this $\beta$, and let $v_n:\R^d\to \R$  be defined as \eqref{e2-14a} with $h\in \M_{\beta}$.
For any $n\ge1$, there is a constant $C(n)>0$ such that for all $\e\in (0,1)$
\begin{equation}\label{t1-3-2}
\|v_{n}^\e\|_{L^2(\R^d;dx)}\le
C(n)\left(\xi_{2,\beta}(\e)+\xi_{4,\beta}(\e)\right)^n,
\end{equation}
where $\xi_{2,\beta}(\e)$ and $\xi_{4,\beta}(\e)$ are defined by \eqref{e1-8a} and \eqref{e2-3-12} respectively.
\end{theorem}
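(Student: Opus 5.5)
We follow the scheme of Remark \ref{r1-3}, with $\bar\sL_{>2}$, $\varphi(\e)=\e^{-2}$ and $\Upsilon_\e^3$ in place of $\bar\sL_\alpha$ and $\Upsilon_\e^1$. The argument has three steps.

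\textbf{Step 1: the residual identity.} Apply $\lambda-\sL_\e$ to each building block of $v_n^\e$ and aim to show
\begin{equation*}
(\lambda-\sL_\e)v_n^\e(x)=\bar F_n^\e(x)+G_n^\e\left(x,\e^{-1}x\right),\qquad n\ge1.
\end{equation*}
For $n=1$, expand $\sL_\e\bar u$ using $\hat\delta$ and write the first-order term through $\Phi_0$. Then compute $\sL_\e$ of $\e\langle\nabla\bar u,\phi_0(\cdot/\e)\rangle$ via the product rule: the term $\e^{-1}\langle\nabla\bar u,\sL\phi_0\rangle=-\e^{-1}\langle\nabla\bar u,\Phi_0\rangle$ cancels the singular drift, leaving the cross term $\Gamma_\e$ and the term with $\nabla\bar u(x+\e z)-\nabla\bar u(x)$. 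After subtracting the second-order Taylor parts, the $O(1)$ terms regroup as $\frac12\langle\nabla^2\bar u,\bar A_0(y)\rangle$, and its average $\frac12\langle\nabla^2\bar u,\bar A_0\rangle=\bar\sL_{>2}\bar u$ cancels against $(\lambda-\bar\sL_{>2})\bar u=h$. The remainder is exactly $-F_1^\e(x,x/\e)$ built from $\Lambda^5_\e,\Lambda_\e^6,\Lambda_\e^7,\Theta^3_\e$. Next, $\e^2\phi_1^\e(x,x/\e)$ removes the oscillating part $F_1^\e-\bar F_1^\e$ via \eqref{e2-3-10}. The error from $\sL_\e$ acting in both variables of $\phi_1^\e(x,x/\e)$ is, by the decomposition with $\delta$, $\delta_2^\e$ and the $\Phi$-term, precisely $G_1^\e(x,x/\e)$. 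The inductive step is identical, using \eqref{e2-13}, \eqref{e2-14} and the two displayed identities in Remark \ref{r1-3}. The residues $\bar F_{n-1}^\e+\bar G_{n-1}^\e$ telescope, leaving $\bar F_n^\e+G_n^\e$. By \eqref{e1-2a} (the same bound holds for any $L^2$ right-hand side), it then suffices to bound $\|\bar F_n^\e\|_{L^2}+\|G_n^\e(\cdot,\cdot/\e)\|_{L^2}$.

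\textbf{Step 2: weighted estimates for the building blocks.} The natural norm is $\|\cdot\|_{\M_{\beta_0},k}$, i.e.\ decay $(1+|x|)^{-d-\beta_0}$ in $x$, applied to $x$-derivatives, uniformly in $y$. This norm controls the $L^2$ norm. The required bounds are as follows.
\begin{itemize}
\item[(a)] \emph{Regularity of $\bar u$.} $\bar u\in\M_{\beta}$ and $\bar G_\lambda$ preserves these norms; this follows from the Gaussian resolvent of $\bar\sL_{>2}$, in the spirit of Lemma \ref{l1-1}.
\item[(b)] \emph{The $\Upsilon_\e^3$ terms.} For $f\in\M_\beta$, $\|\Upsilon_\e^3 f\|_{\M_\beta,k}\le c\,\xi_{4,\beta}(\e)\sum_{j\le k+3}\|f\|_{\M_\beta,j}$. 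For $\Lambda^5_\e,\Lambda^6_\e,\Lambda^7_\e$, split $|z|\le\e^{-1}$, where a third-order Taylor expansion produces $\e\int_{\{|z|\le\e^{-1}\}}|z|^3j$ ($\gamma_9$), from $|z|>\e^{-1}$. On $|z|>\e^{-1}$, the second-moment tail $\int_{\{|z|>\e^{-1}\}}|z|^2j$ again gives $\gamma_9$. The weighted-decay part, where $x+\e z$ lands far from $x$, is handled by the region $|\e z|\ge(1+|x|)/2$, which gives $\e^{-1}\gamma_{3,\beta}$. The term $-\lambda\e\langle\nabla f,\phi_0\rangle$ gives $\e$.
\item[(c)] \emph{The correctors $\phi_n^\e$.} Assumption \ref{a1-2}, applied for each fixed $x$ and to $x$-derivatives, gives $\sup_y|\nabla_x^k\phi_n^\e(x,y)|+\sup_y|\nabla_y\nabla_x^k\phi_n^\e(x,y)|$ bounded by the weighted norm of the right-hand side.
\item[(d)] \emph{The terms $G_n^\e$.} For the first and fourth terms, use $\e^{-d}\int\delta\phi(x;z)Kj(z/\e)\,dz$ with the $C^2$ bound in $x$; this gives a factor of order $\e^2\gamma_4$-type, comparable to $\xi_{2,\beta}$. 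For the mixed difference $\delta_2^\e$, use the gradient bound in $y$ together with the $x$-gradient to gain $\e|z|\wedge1$ per factor. Together with $\varphi(\e)^{-1}=\e^2$, this yields exactly $\xi_{2,\beta}(\e)$.
\end{itemize}
Then $\|G_n^\e\|\le c\,\xi_{2,\beta}\cdot$(size of $\phi_n^\e$), while $\phi_n^\e$ has the size of $\bar F_n^\e,F_n^\e,G_{n-1}^\e$ and $\psi_{n+1}^\e$ has the size of $\bar F_n^\e+\bar G_n^\e$.

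\textbf{Step 3: induction.} Show that every $\M_{\beta_0},k$-seminorm of $\psi_{n+1}^\e$, $\phi_n^\e$, $F_n^\e$ and $G_n^\e$ is at most $C(n,k)(\xi_{2,\beta}+\xi_{4,\beta})^n$. Each step costs finitely many extra derivatives, which is harmless since $h\in\M_\beta$ has all seminorms finite. Combined with Step 1 and \eqref{e1-2a}, this gives \eqref{t1-3-2}.

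\textbf{Main obstacle.} The main obstacle is Step 2(d) and (b) in weighted form. Jumps of size $|\e z|\gtrsim1+|x|$ destroy the pointwise decay in $x$, so one must separate the region $|z|\ge(1+|x|)/(2\e)$ and control it by the tail supremum in $\gamma_{3,\beta}$. Here the decay rate can only be $\beta_0$, not better. Carefully tracking this loss uniformly through the induction is the delicate point; I would first prove a single lemma bounding $\Gamma_\e$, $\delta$ and $\delta_2^\e$ operators on $\M_\beta$-type functions and then reuse it at every stage.
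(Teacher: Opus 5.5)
Your architecture matches the paper's: the residual identity $(\lambda-\sL_\e)v_n^\e=\bar F_n^\e+G_n^\e(\cdot,\e^{-1}\cdot)$, weighted seminorm bounds, induction, and the resolvent bound \eqref{e1-2a}. However, Step 2(b) is false as stated, and Step 3 inherits the error.

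In (b) you dropped $\Theta_\e^3$. The term $\Theta_\e^3 f(x,y)=\frac12\langle\nabla^2 f(x),\bar A_0(y)-\bar A_0\rangle$ is of order one, so in general only $|\Upsilon_\e^3 f(x,y)|\le c\sum_{i=0}^3\|f\|_{\M_\beta,i}(1+|x|)^{-d-\beta}$ holds (Lemma \ref{l4-4}). The factor $\xi_{4,\beta}(\e)$ is gained only by $\Upsilon_\e^3 f-\Theta_\e^3 f$ (Lemmas \ref{l4-1}--\ref{l4-3}). It therefore passes to the $y$-average of $\Upsilon_\e^3 f$, because $\int_{\T^d}\Theta_\e^3 f(x,y)\,dy=0$.

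That cancellation is the ingredient missing from your estimates. It rests on $\int_{\T^d}\bar A_0(y)\,dy=\bar A_0$, which follows from the corrector equation \eqref{e2-2} together with the symmetry identity $-2\int_{\T^d}\sL\phi_0\otimes\phi_0\,dy=\int_{\T^d}\int_{\R^d}(\phi_0(y+z)-\phi_0(y))\otimes(\phi_0(y+z)-\phi_0(y))K(y,y+z)j(z)\,dz\,dy$. You invoke this averaging only heuristically in Step 1 ("its average cancels"). Quantitatively, it is exactly what makes $\bar F_n^\e$ small while $F_n^\e$ is not.

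Consequently, your induction hypothesis fails already at $n=1$. In general $F_1^\e=\Upsilon_\e^3\bar u$ and $\phi_1^\e$ are $O(1)$, not $O(\xi_{2,\beta}+\xi_{4,\beta})$; indeed $\phi_1^\e$ contains the second-order cell corrector driven by $\bar A_0(y)-\bar A_0$. With $\xi:=\xi_{2,\beta}(\e)+\xi_{4,\beta}(\e)$, the correct bookkeeping (cf.\ \eqref{t1-3-6}) is
\begin{gather*}
\sup_{y\in\T^d}\|F_n^\e(\cdot,y)\|_{\M_\beta,k}\le c\,\xi^{n-1},\qquad \|\bar F_n^\e\|_{\M_\beta,k}\le c\,\xi_{4,\beta}(\e)\,\xi^{n-1},\\
\sup_{y\in\T^d}\|G_n^\e(\cdot,y)\|_{\M_\beta,k}+\|\bar G_n^\e\|_{\M_\beta,k}\le c\,\xi_{2,\beta}(\e)\,\xi^{n-1}.
\end{gather*}
Thus $\phi_n^\e=O(\xi^{n-1})$ and $\psi_{n+1}^\e=O(\xi^n)$. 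These bounds propagate, at the cost of a few extra derivatives, through three asymmetric recursions:
\begin{gather*}
\|F_{n+1}^\e\|\lesssim\|\bar F_n^\e\|+\|\bar G_n^\e\|,\qquad \|\bar F_{n+1}^\e\|\lesssim\xi_{4,\beta}\bigl(\|\bar F_n^\e\|+\|\bar G_n^\e\|\bigr),\\
\|G_{n+1}^\e\|+\|\bar G_{n+1}^\e\|\lesssim\xi_{2,\beta}\bigl(\|G_n^\e\|+\|\bar G_n^\e\|+\|\bar F_n^\e\|\bigr).
\end{gather*}
The residual $\bar F_n^\e+G_n^\e(\cdot,\e^{-1}\cdot)$ is still $O(\xi^n)$, because the order-one oscillating part of $F_n^\e$ never enters it. It is absorbed by $\phi_n^\e$ and returns only through $G_n^\e$, which carries the factor $\xi_{2,\beta}$ from your Step 2(d). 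With this correction your argument yields \eqref{t1-3-2}.

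A minor point: there is no $\beta_0$ in this regime. The Gaussian resolvent of $\bar\sL_{>2}$ preserves $\M_\beta$ by Lemma \ref{l3-1} and Remark \ref{r3-1a}, with $\bar A_0$ positive definite by Lemma \ref{l5-2}. The far-jump region costs only the factor $\e^{-1}\gamma_{3,\beta}(\e)$, not any loss of decay.
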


\section{Preliminaries}\label{section3}

This section is again split into three parts. In each part, we will establish explicit bounds for $\Lambda_\e^i f(x,y)$ and $\Theta_\e^i f(x,y)$, which are crucial for the proofs of main results in the previous section.

\subsection{Subcritical $\alpha$-stable regime}
Throughout this part, we always assume that Assumption \ref{a1-1}(i) and Assumption \ref{a1-2} hold.
For a fixed $\beta>0$, set $\beta_0:=\alpha\wedge \beta=\min\{\alpha,\beta\}$.

\begin{lemma}\label{l1-1}
Assume that $g\in \M_{\beta}$ for some $\beta>0$, and let $u_{g,\alpha}\in C_b^\infty(\R^d)$ be the
pointwise solution to the following resolvent equation:
$$
\lambda u_{g,\alpha}(x)-\bar \sL_\alpha u_{g,\alpha}(x)=g(x),\quad x\in \R^d,
$$ where $\bar \sL_\alpha$ is defined in \eqref{e2-1a-}.
Then, $u_{g,\alpha}\in \M_{\beta_0}$, and there exists a constant $C_1:=C_1(\beta)>0$ $($which is independent of $g$$)$ such that for all $k\ge0$,
$$
\|u_{g,\alpha}\|_{\M_{\beta_0},k}\le C_1\|g\|_{\M_{\beta},k}.
$$
\end{lemma}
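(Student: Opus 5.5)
Since $\bar \sL_\alpha$ has constant coefficients, it commutes with derivatives. So $\nabla^k u_{g,\alpha}$ solves the same resolvent equation with right-hand side $\nabla^k g$, i.e. $\nabla^k u_{g,\alpha}=\bar G_\lambda(\nabla^k g)$. Here $\bar G_\lambda f=\int_0^\infty e^{-\lambda t}P_t f\,dt$ is the resolvent of the rotationally symmetric $\alpha$-stable process with jump intensity $\bar K|z|^{-d-\alpha}$. The lemma therefore reduces to the case $k=0$: for every $f\in C_b(\R^d)$ with $|f(x)|\le M(1+|x|)^{-d-\beta}$ we want
$$
|\bar G_\lambda f(x)|\le C_1 M(1+|x|)^{-d-\beta_0},
$$
with $C_1$ depending only on $d,\alpha,\beta,\lambda,\bar K$. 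Applying this with $f=\nabla^k g$ and $M=\|g\|_{\M_\beta,k}$ gives the claim. Smoothness of $u_{g,\alpha}$ and the justification of differentiating under the resolvent follow from dominated convergence, since all derivatives of $g$ are bounded.

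Write $\bar G_\lambda f(x)=\int_{\R^d} g_\lambda(x-y)f(y)\,dy$, where $g_\lambda(w)=\int_0^\infty e^{-\lambda t}p_t(w)\,dt$ and $p_t$ is the stable transition density. The key input is the standard two-sided heat kernel bound
$$
p_t(w)\asymp t^{-d/\alpha}\wedge \frac{t}{|w|^{d+\alpha}}.
$$
Integrating in $t$ against $e^{-\lambda t}$ gives
$$
g_\lambda(w)\le c\,|w|^{-d-\alpha}\quad\text{for } |w|\ge 1,
$$
together with $\int g_\lambda=\lambda^{-1}$. Near the origin, $g_\lambda(w)\le c|w|^{\alpha-d}$ if $\alpha<d$, and $g_\lambda$ is bounded, or has a logarithmic singularity, if $\alpha\ge d$ (possible only when $d=1$). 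In all cases $g_\lambda$ is integrable near $0$.

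Next comes the convolution estimate, which is the main step. For $|x|\le 2$ we use $|\bar G_\lambda f|\le \lambda^{-1}\|f\|_\infty\le \lambda^{-1}M$. For $|x|>2$ we split $\R^d$ into $D_1=\{|y|\le |x|/2\}$ and $D_2=\{|y|>|x|/2\}$.

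On $D_1$ we have $|x-y|\ge |x|/2\ge 1$, so $g_\lambda(x-y)\le c|x|^{-d-\alpha}$. Hence the $D_1$ contribution is at most
$$
c\,|x|^{-d-\alpha}\int_{\R^d} M(1+|y|)^{-d-\beta}\,dy\le cM|x|^{-d-\alpha}.
$$
This needs $\beta>0$, so that $(1+|y|)^{-d-\beta}$ is integrable, and it is the source of the decay rate $\alpha$.

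On $D_2$ we have $|f(y)|\le cM(1+|x|)^{-d-\beta}$. Since $\int g_\lambda=\lambda^{-1}$, the $D_2$ contribution is at most $cM(1+|x|)^{-d-\beta}$.

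Combining the two regions gives decay of order $(1+|x|)^{-d-\min\{\alpha,\beta\}}$, which is exactly $\beta_0$. The constants depend only on $\beta$ and the fixed parameters, and not on $g$.

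The only genuinely nontrivial input is the heat-kernel tail bound $g_\lambda(w)\lesssim |w|^{-d-\alpha}$ for $|w|\ge1$. I would take it from the known sharp estimates for stable densities; alternatively, it follows from the scaling $p_t(w)=t^{-d/\alpha}p_1(t^{-1/\alpha}w)$ combined with $p_1(w)\asymp(1+|w|)^{-d-\alpha}$. Everything else is elementary bookkeeping.
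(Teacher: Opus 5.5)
Your proof is correct and follows essentially the same route as the paper. Both commute $\nabla^k$ with the constant-coefficient resolvent, use the stable heat-kernel upper bound $t^{-d/\alpha}\wedge t|w|^{-d-\alpha}$, and split the convolution into $\{|y|\le |x|/2\}$, which gives the $(1+|x|)^{-d-\alpha}$ decay, and $\{|y|>|x|/2\}$, which gives the $(1+|x|)^{-d-\beta}$ decay. The only cosmetic difference is that you integrate in $t$ first to bound the resolvent kernel, whereas the paper bounds the spatial integral for each fixed $t$ and integrates $(1+t)e^{-\lambda t}$ at the end.
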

\begin{proof} Let $\{\bar T_t\}_{t\ge 0}$ and $\{\bar p_\alpha(t,x)\}_{t>0,x\in \R^d}$
denote the semigroup and the heat kernel associated with the operator $\bar \sL_\alpha$, respectively.
Since $g\in \M_{\beta}$,  for every $k\ge 0$ and $x\in \R^d$,
\begin{align*}
\nabla^k u_{g,\alpha}(x)&=\int_0^\infty e^{-\lambda t}\nabla^k \bar T_t g(x)\,dt=
\int_{0}^\infty e^{-\lambda t}\nabla^k\left(\int_{\R^d}\bar p_\alpha(t,\cdot-y)g(y)\,dy\right)(x)\,dt\\
&=\int_{0}^\infty e^{-\lambda t} \int_{\R^d}\bar p_\alpha(t,x-y)\nabla^k g(y)\,dy \,dt.
\end{align*}
It is well known that (see e.g. \cite{CK})
$$
\bar p_\alpha(t,x)\le c_1\left(t^{-d/\alpha}\wedge \frac{t}{|x|^{d+\alpha}}\right),\quad  t>0,\ x\in \R^d.
$$
Then, for all $k\ge0$ and $x\in \R^d$,
$$
|\nabla^k  u_{g,\alpha}(x)| \le c_2\|g\|_{\M_{\beta},k}\int_{0}^\infty e^{-\lambda t}
 \int_{\R^d}\left(t^{-d/\alpha}\wedge \frac{t}{|x-y|^{d+\alpha}}\right)(1+|y|)^{-d-\beta}\,dy\,dt.
$$

Set
\begin{align*}
\int_{\R^d}\left(t^{-d/\alpha}\wedge \frac{t}{|x-y|^{d+\alpha}}\right)(1+|y|)^{-d-\beta}\,dy&=\left(\int_{\{|y|\le |x|/2\}}+\int_{\{|y|>|x|/2\}}\right)
\left(t^{-d/\alpha}\wedge \frac{t}{|x-y|^{d+\alpha}}\right)(1+|y|)^{-d-\beta}\,dy\\
&=:I_1(t)+I_2(t).
\end{align*}
Since $|x-y|\ge |x|-|y|\ge |x|/2$ for every $x,y\in \R^d$ with $|y|\le |x|/2$,
\begin{align*}
I_1(t)&\le
\begin{cases}
\displaystyle\int_{\R^d}\left(t^{-d/\alpha}\wedge \frac{t}{|x-y|^{d+\alpha}}\right)dy\le c_3(1+|x|)^{-d-\alpha},\ \ &\ |x|\le 2,\\
c_4t|x|^{-d-\alpha}\displaystyle\int_{\R^d}(1+|y|)^{-d-\beta}dy\le c_5t(1+|x|)^{-d-\alpha},\ \ &\ |x|>2.
\end{cases}
\end{align*}
On the other hand,
\begin{align*}
I_2(t)&\le c_6(1+|x|)^{-d-\beta}\int_{\R^d}\left(t^{-d/\alpha}\wedge \frac{t}{|x-y|^{d+\alpha}}\right)dy
\le c_7(1+|x|)^{-d-\beta}.
\end{align*}
Putting all the estimates above together yields that for all  $k\ge0$ and $x\in \R^d$,
\begin{align*}
|\nabla^k u_{g,\alpha}(x)|&\le c_8\|g\|_{\M_\beta,k}\left[(1+|x|)^{-d-\alpha}+(1+|x|)^{-d-\beta}\right]  \left(\int_0^\infty(1+t)e^{-\lambda t}\,dt\right)\\
&\le c_9\|g\|_{\M_{\beta},k}(1+|x|)^{-d-\beta_0}.
\end{align*}
This proves the desired assertion.
\end{proof}
\begin{lemma}\label{l1-5} Let $\Lambda_\e^1 f(x,y)$  be defined in \eqref{e1-5a}.
For every $\beta>0$, there exists a constant $C_2:=C_2(\beta)>0$ such that for every $f\in \M_\beta$, $x\in \R^d$ and $y\in \T^d$,
\begin{equation}\label{l1-5-1}
|\Lambda_\e^1 f(x,y)|\le C_2
\gamma_{1,\beta}(\e)
\left(\sum_{i=0}^2\|f\|_{\M_{\beta},i}\right)
(1+|x|)^{-d-\beta},
\quad
\end{equation}
where  $\gamma_{1,\beta}(\e)$ is defined in \eqref{e1-7}.
\end{lemma}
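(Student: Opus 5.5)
Since $K$ is bounded by $K_2$, we have directly
$$|\Lambda_\e^1 f(x,y)|\le K_2\int_{\R^d}|\delta f(x;z)|\,\Pi_\e(z)\,dz .$$
The proof then reduces to a pointwise bound on $|\delta f(x;z)|$ using the decay of $f,\nabla f,\nabla^2 f$ built into $\M_\beta$. We split the $z$-integral into three regions: $\{|z|\le 1\}$, $\{1<|z|< (1+|x|)/2\}$ and $\{|z|\ge (1+|x|)/2\}$. These match the two pieces in the definition of $\gamma_{1,\beta}(\e)$.

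\emph{Small jumps, $|z|\le 1$.} By Taylor's formula,
$$|\delta f(x;z)|\le |z|^2\sup_{|w-x|\le 1}|\nabla^2 f(w)|\le c\,\|f\|_{\M_\beta,2}\,|z|^2(1+|x|)^{-d-\beta},$$
since $1+|w|\ge (1+|x|)/2$ whenever $|w-x|\le1$. Integrating against $\Pi_\e$ gives a contribution
$$c\,\|f\|_{\M_\beta,2}\,(1+|x|)^{-d-\beta}\int (1\wedge|z|^2)\,\Pi_\e(z)\,dz.$$

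\emph{Intermediate jumps, $1<|z|<(1+|x|)/2$.} Here $\delta f(x;z)=f(x+z)-f(x)$, and $|x+z|\ge |x|-|z|$ gives $1+|x+z|\ge (1+|x|)/2$. Hence
$$|\delta f(x;z)|\le c\,\|f\|_{\M_\beta,0}(1+|x|)^{-d-\beta}.$$
Integrating over $|z|>1$ against $\Pi_\e$ is again controlled by $\int(1\wedge|z|^2)\Pi_\e$.

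\emph{Large jumps, $|z|\ge (1+|x|)/2$.} We cannot get decay in $x$ from $\delta f$ itself, so we take it from $\Pi_\e$. Write $|\delta f(x;z)|\le |f(x+z)|+|f(x)|+|\nabla f(x)|\,|z|\I_{\{|z|\le1\}}$.
\begin{itemize}
\item The term $|f(x)|\,\Pi_\e(z)$: we have $\{|z|\ge (1+|x|)/2\}\subset\{|z|\ge 1/2\}$, and on this set $1\wedge|z|^2\ge 1/4$. So its integral is bounded by $4\|f\|_{\M_\beta,0}(1+|x|)^{-d-\beta}\int(1\wedge|z|^2)\Pi_\e$.
\item The gradient term lives on $1/2\le|z|\le1$ and is handled the same way.
\item The term $|f(x+z)|\,\Pi_\e(z)$: bound $\Pi_\e(z)$ by $\sup_{|z|\ge(1+|x|)/2}\Pi_\e$, which is at most $(1+|x|)^{-d-\beta}$ times the second part of $\gamma_{1,\beta}(\e)$. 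Then use $\int_{\R^d}|f(x+z)|\,dz\le c\,\|f\|_{\M_\beta,0}$, which is finite because $\beta>0$.
\end{itemize}

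Summing the three regions gives \eqref{l1-5-1} with a constant depending only on $d,\beta,K_2$. The only real point is the large-jump region. There $f(x+z)$ need not be small, and the required $x$-decay must come entirely from the supremum term in $\gamma_{1,\beta}(\e)$, together with the integrability of $f$. Everything else is routine Taylor-type estimation.
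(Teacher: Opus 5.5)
Your proof is correct and is essentially the paper's argument. The paper first records the pointwise bound \eqref{l1-5-2}: Taylor for $|z|\le 1$, and $|f(x)|+|f(x+z)|$ for $|z|>1$. It then splits only the $(1+|x+z|)^{-d-\beta}$ term at $|z|=(1+|x|)/2$. Below the threshold it uses $1+|x+z|\gtrsim 1+|x|$; above it uses the supremum part of $\gamma_{1,\beta}(\e)$ together with integrability of $(1+|\cdot|)^{-d-\beta}$. This is exactly your three-region bookkeeping, just organized slightly differently.
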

\begin{proof}
Applying the mean value theorem, we have
for every $f\in \M_\beta$ and $x,z\in \R^d$,
\begin{equation}\label{l1-5-2}
|\delta f(x;z)| \le c_1\left(\sum_{i=0}^2\|f\|_{\M_{\beta},i}\right)(1+|x|)^{-d-\beta}\left(|z|^2\I_{\{|z|\le 1\}}
+\I_{\{|z|>1\}}\right)  +c_1\|f\|_{\M_{\beta},0}(1+|x+z|)^{-d-\beta}\I_{\{|z|>1\}}.
\end{equation}
Thus, for every $f\in \M_\beta$, $x\in \R^d$ and $y\in \T^d$,
\begin{align*}
|\Lambda_\e^1 f(x,y)|\le& c_2\left(\sum_{i=0}^2\|f\|_{\M_{\beta},i}\right)(1+|x|)^{-d-\beta}\int_{\R^d}\left(1\wedge |z|^2\right)\Pi_\e(z)\,dz+c_2\|f\|_{\M_{\beta},0} \int_{\{|z|>1\}}(1+|x+z|)^{-d-\beta}\Pi_\e(z)\,dz,
\end{align*}
where $\Pi_\e(z)$ is defined by \eqref{e1-7a}. Meanwhile,
\begin{align*}
&\int_{\{|z|>1\}}(1+|x+z|)^{-d-\beta}\Pi_\e(z)\,dz\\
&\le \left(\int_{\{|z|>1,|z|\le ({1+|x|})/{2}\}}+\int_{\{|z|>({1+|x|})/{2}\}}\right)(1+|x+z|)^{-d-\beta}
\Pi_\e(z)\,dz\\
&\le c_3(1+|x|)^{-d-\beta}\int_{\{|z|>1\}}\Pi_\e(z)\,dz+c_3\sup_{z\in \R^d:|z|\ge {(1+|x|)}/{2}}
\Pi_\e(z)\int_{\{|z|> 1\}}(1+|x+z|)^{-d-\beta}\,dz\\
&\le c_4
\gamma_{1,\beta}(\e)
(1+|x|)^{-d-\beta},
\end{align*}
where the last inequality follows from the definition $\gamma_{1,\beta}(\e)$.

Combining the above estimates yields the desired conclusion.
\eqref{l1-5-1}.
\end{proof}

\begin{lemma}\label{l1-2} Let $\Lambda_\e^2 f(x,y)$  be defined in \eqref{e1-5a}. For every $\beta>0$, there exists a positive constant $C_3:=C_3(\beta)$ such that for every $f\in \M_{\beta}$,  $x\in \R^d$ and $y \in \T^d$,
\begin{equation}\label{l1-2-1}
|\Lambda_\e^2 f(x,y)|\le C_3
\xi_0(\e)
\left(\sum_{i=0}^2\|f\|_{\M_{\beta},i}\right)(1+|x|)^{-d-\beta_0},
\end{equation}
where
$\xi_0(\e)$
is defined by \eqref{ex1-1-1}.
\end{lemma}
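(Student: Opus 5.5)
We need to bound
$$\Lambda_\e^2 f(x,y)=\int_{\R^d}\delta f(x;z)\,\frac{K(y,y+\e^{-1}z)-\bar K(y)}{|z|^{d+\alpha}}\,dz ,$$
exploiting that $z\mapsto K(y,y+\e^{-1}z)-\bar K(y)$ oscillates on scale $\e$ and has zero mean over each period cell $\e(k+[0,1)^d)$, $k\in\Z^d$. The plan is to split the integral into $\{|z|\le c\e\}$ and $\{|z|>c\e\}$. On the small region the oscillation cannot be used. Here we only use $|\delta f(x;z)|\le c\,(\sum_{i\le 2}\|f\|_{\M_\beta,i})(1+|x|)^{-d-\beta}|z|^2$, which follows from \eqref{l1-5-2}, together with boundedness of $K$. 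This gives a contribution of order $\int_{|z|\le c\e}|z|^{2-d-\alpha}dz\asymp \e^{2-\alpha}$, which is at most $\xi_0(\e)$ in every case.

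For $|z|>c\e$ we write $g(z):=\delta f(x;z)|z|^{-d-\alpha}$ and $\kappa(w):=K(y,y+w)-\bar K(y)$. We tile $\R^d$ by cubes $Q_k=\e(k+[0,1)^d)$. Since $\int_{Q_k}\kappa(\e^{-1}z)\,dz=0$, we have
$$\int_{Q_k} g(z)\kappa(\e^{-1}z)\,dz=\int_{Q_k}\bigl(g(z)-g(z_k)\bigr)\kappa(\e^{-1}z)\,dz ,$$
so each cube contributes at most $\e^{d+1}\sup_{Q_k}|\nabla g|$. A slightly cleaner way to organise this is to write $\kappa=\mathrm{div}\,\Psi$ with $\Psi$ periodic and bounded, solving $\Delta\chi=\kappa$ on $\T^d$ and setting $\Psi=\nabla\chi$. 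Integration by parts then gives $\e\int \Psi(\e^{-1}z)\cdot\nabla g(z)\,dz$ plus a boundary term on $|z|=c\e$ of size $\e\cdot\e^{d-1}\sup_{|z|=c\e}|g|\lesssim \e^{2-\alpha}$. One also needs the jump of $\delta f$ across $|z|=1$ due to the indicator; this contributes a surface term of order $\e\,|\nabla f(x)|$, which is fine.

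It then remains to estimate $\e\int_{|z|>c\e}|\nabla g(z)|\,dz$. We have
$$|\nabla g|\lesssim |\nabla_z\delta f(x;z)|\,|z|^{-d-\alpha}+|\delta f(x;z)|\,|z|^{-d-\alpha-1}.$$
For $c\e<|z|\le1$, $|\nabla_z\delta f|=|\nabla f(x+z)-\nabla f(x)|\lesssim|z|(1+|x|)^{-d-\beta}$ and $|\delta f|\lesssim|z|^2(\cdots)$. This gives $\e\int_{c\e}^1 r^{-\alpha}dr$, which is $\e^{2-\alpha}$ for $\alpha>1$, $\e|\log\e|$ for $\alpha=1$, and $\e$ for $\alpha<1$. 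For $|z|>1$, $|\nabla_z\delta f|\le |\nabla f(x+z)|\lesssim (1+|x+z|)^{-d-\beta}$ and $|\delta f|\lesssim (1+|x|)^{-d-\beta}+(1+|x+z|)^{-d-\beta}$. Integrating against $|z|^{-d-\alpha}$ and $|z|^{-d-\alpha-1}$ with the split $|z|\lessgtr(1+|x|)/2$, exactly as in the proof of Lemma \ref{l1-1}, gives $\e(1+|x|)^{-d-\beta_0}$. The loss from $\beta$ to $\beta_0$ comes from the region $|z|\ge (1+|x|)/2$, where the factor $|z|^{-d-\alpha}\le c(1+|x|)^{-d-\alpha}$ replaces the decay of $f$.

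This bound is not sufficient when $\alpha<1$, since it yields only $\e$ while $\xi_0(\e)=\e^\alpha$. This is the main obstacle: handling $\alpha\le1$, and in particular obtaining $\e^{\alpha}$ rather than something worse, and the $(\log\e)^2$ in the case $\alpha=1$. Since $\e\le\e^\alpha$ for $\alpha<1$, the bound above already suffices there, and $\e|\log\e|\le \e(\log\e)^2$ for small $\e$, so in fact all cases reduce to the estimate above. One must, however, check that $\sup|\nabla g|$ on cubes near $|z|=1$ is controlled despite the discontinuity of the cutoff $\I_{\{|z|\le1\}}$. To avoid that issue, we will first observe that the $\I_{\{|z|\le 1\}}$ term can be changed to $\I_{\{|z|\le c\e\}}$ at cost $|\nabla f(x)|\int_{c\e<|z|\le1}|z|\,|\kappa(\e^{-1}z)|\,|z|^{-d-\alpha}dz$. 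This cost is itself handled by the same oscillation argument applied to the smooth, odd-in-spirit integrand $z|z|^{-d-\alpha}$, giving $\e\int_{c\e}^1 r^{-\alpha}dr\lesssim\xi_0(\e)$. Collecting the three pieces yields \eqref{l1-2-1}.
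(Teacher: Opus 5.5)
Your integration-by-parts argument is correct and already proves the lemma (for small $\e$, as you note). The last paragraph, however, must be deleted: it is not merely superfluous but wrong. Replacing $\I_{\{|z|\le 1\}}$ by $\I_{\{|z|\le c\e\}}$ costs
\[
\Big\langle \nabla f(x), \int_{\{c\e<|z|\le 1\}} \frac{z\,\kappa(\e^{-1}z)}{|z|^{d+\alpha}}\,dz\Big\rangle
=\e^{1-\alpha}\Big\langle \nabla f(x), \int_{\{c<|w|\le \e^{-1}\}} \frac{w\,\kappa(w)}{|w|^{d+\alpha}}\,dw\Big\rangle .
\]
The $w$-integral converges to a limit that is in general nonzero, because $\kappa$ need not be even. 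For instance, with $d=1$, $K(x,y)=3+\sin 2\pi x+\sin 2\pi y$ and $y=0$, one gets $\kappa(w)=\sin 2\pi w$.

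Your oscillation bound for this term is off by one power of $r$. Since $|\nabla (z|z|^{-d-\alpha})|\asymp |z|^{-d-\alpha}$, the bulk term is $\e\int_{c\e}^1 r^{-1-\alpha}\,dr\asymp \e^{1-\alpha}$. The boundary term at $|z|=c\e$ has the same size. This blows up for $\alpha\in(1,2)$, is of order one for $\alpha=1$, and exceeds $\e^{\alpha}$ for $\alpha\in(1/2,1)$.

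At the same time, the modified main term loses the bound $|\nabla_z\delta f(x;z)|\lesssim |z|$ on $\{c\e<|z|\le 1\}$ and acquires a compensating $\e^{1-\alpha}$ piece. The two cancel only jointly, which is exactly why the linear term must be subtracted on the whole unit ball.

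The discontinuity at $|z|=1$ needs no such device. In your divergence formulation it is just the surface term $\e\int_{\{|z|=1\}}\langle\nabla f(x),z\rangle\,\Psi(\e^{-1}z)\cdot z\,dS=O(\e|\nabla f(x)|)$, which you already identified. In the cube formulation, the paper bounds the integrand crudely on the shell $\{1-6\sqrt{d}\e<|z|\le 1+6\sqrt{d}\e\}$, whose volume is $O(\e)$. Separately, your remark that the case $\alpha<1$ is ``not sufficient'' is a slip: $\e\le\e^{\alpha}$, so there is no obstacle there.

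With that paragraph removed, your proof differs from the paper's only in how the cell cancellation is implemented. The paper works on the cubes $Q_v=\prod_{i=1}^d(v_i,v_i+\e]$, $v\in\e\Z^d$, and uses $\int_{Q_v}K(y,y+\e^{-1}z)\,dz=\e^d\bar K(y)$. This gives $\Lambda_\e^2 f(x,y)=\sum_v\int_{Q_v}(H(x,z)-H(x,v))(K(y,y+\e^{-1}z)-\bar K(y))\,dz$ with $H(x,z)=\delta f(x;z)|z|^{-d-\alpha}$. It then treats the cubes with $|v|\le 4\sqrt{d}\e$ crudely, and on the remaining ones splits $H(x,z)-H(x,v)$ into the variation of $|z|^{-d-\alpha}$ and that of $\delta f$. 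This is completely elementary.

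You instead use a periodic potential $\Psi=\nabla\chi$ with $\Delta\chi=\kappa$ on $\T^d$. This is legitimate, and $\|\Psi\|_\infty$ is bounded uniformly in $y$ because $\kappa$ is uniformly bounded with mean zero. It turns the non-cubical boundaries $|z|=c\e$ and $|z|=1$ into explicit surface integrals and avoids the bookkeeping near them, at the price of invoking elliptic regularity on the torus. Both arguments give the rates $\e^{2-\alpha}$, $\e|\log\e|$ and $\e$ for $\alpha>1$, $\alpha=1$ and $\alpha<1$ respectively, which are bounded by $\xi_0(\e)$ for small $\e$.
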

\begin{proof}
Let $$ H(x,z):=\frac{f(x+z)-f(x)-\langle\nabla f(x), z\I_{\{|z|\le 1\}}\rangle}{|z|^{d+\alpha}},\quad x,z\in \R^d \hbox{ with } z\neq 0,$$
and  $H(x,0)=0$ for $x\in \R^d$.
Then,
\begin{equation}\label{l1-2-4}
\begin{split}
\Lambda_\e^2 f(x,y)&=\sum_{v\in \e\Z^d}\int_{\Pi_{i=1}^d(v_i,v_i+\e]}
H(x,z)\left(K\left(y,y+\e^{-1} z \right)-\bar K(y)\right)\,dz \\
&=\sum_{v\in \e\Z^d}\int_{\Pi_{i=1}^d(v_i,v_i+\e]}
\left(H(x,z)-H(x,v)\right) K\left(y,y+\e^{-1} z \right)\,dz\\
&\quad  +\sum_{v\in \e\Z^d}\int_{\Pi_{i=1}^d(v_i,v_i+\e]}\left(H(x,v)-H(x,z)\right)\bar K(y)\,dz,
\end{split}\end{equation}
where in the second equality we  used the fact that
$$\int_{\Pi_{i=1}^d(v_i,v_i+\e]}K\left(y,y+\e^{-1} z \right)\,dz=
\e^d\bar K(y),\quad \ v\in \e\Z^d. $$

Note that $f\in \M_\beta$. By \eqref{l1-5-2},
for every $x\in \R^d$, $v\in \e\Z^d$
with $|v|\le 4\sqrt{d}\e$ and $z\in \Pi_{i=1}^d(v_i,v_i+\e]$,
\begin{align*}
|H(x,z)-H(x,v)|&\le |H(x,z)|+|H(x,v)|\\
&\le c_1\|f\|_{\M_{\beta},2}(1+|x|)^{-d-\beta}
\left(|z|^{2-d-\alpha}+|v|^{2-d-\alpha}\right).
\end{align*}
On the other hand, for every $x\in \Z^d$, $v\in \e\Z^d$
and $z\in \Pi_{i=1}^d(v_i,v_i+\e]$, we can write
\begin{align*}
 |H(x,z)-H(x,v)|
&\le \left|f(x+z)-f(x)-\langle \nabla f(x),z\I_{\{|z|\le 1\}}\rangle\right|\left|\frac{1}{|z|^{d+\alpha}}-\frac{1}{|v|^{d+\alpha}}\right|\\
&\quad +\frac{1}{|v|^{d+\alpha}} \left|f(x+z)-f(x+v)
-\langle \nabla f(x), z\I_{\{|z|\le 1\}}
-v\I_{\{|v|\le 1\}}\rangle\right|\\
&=:I_1(x,z,v)+I_2(x,z,v).
\end{align*}
According to \eqref{l1-5-2} and the mean value theorem, we deduce that
for every $x\in \Z^d$,  $v\in \e\Z^d$ with $|v|>4\sqrt{d}\e$
and $z\in \Pi_{i=1}^d(v_i,v_i+\e]$,
\begin{align*}
 I_1(x,z,v)
&\le  \frac{c_2\e}{|z|^{d+\alpha+1}}\left(\sum_{i=0}^2\|f\|_{\M_{\beta},i}\right)
\Big[(1+|x|)^{-d-\beta}\left(|z|^2\I_{\{2\sqrt{d}\e<|z|\le 1\}}+\I_{\{|z|>1\}}\right)+(1+|x+z|)^{-d-\beta}\I_{\{|z|>1\}}\Big]
\end{align*} and
\begin{align}
I_2(x,z,v)&\le \frac{c_3\e}{|z|^{d+\alpha}}\Bigg[\left(\int_0^1\left|\nabla f(x+z+s(v-z))-\nabla f(x)\right|ds\right)\I_{\{|v|\le 1-4\sqrt{d}\e\}}\nonumber\\
&\quad \quad\qquad\,\,+\left(\int_0^1\left|\nabla f(x+z+s(v-z))\right|ds\right)\I_{\{|v|>1+4\sqrt{d}\e\}}\Bigg]\nonumber\\
&\quad+ \frac{c_3}{|z|^{d+\alpha}}\left(|f(x+v)|+|f(x+z)|+|\nabla f(x)|\right) \I_{\{1-4\sqrt{d}\e<|v|\le 1+4\sqrt{d}\e\}}\nonumber\\
&\le \frac{c_4\e}{|z|^{d+\alpha}}\left(\sum_{i=0}^2\|f\|_{\M_{\beta},i}\right)
\Bigg[(1+|x|)^{-d-\beta}|z|\I_{\{2\sqrt{d}\e<|z|\le 1-2\sqrt{d}\e\}} +(1+|x+z|)^{-d-\beta}\I_{\{|z|>1+2\sqrt{d}\e\}}\Bigg]\nonumber\\
&\quad+ \frac{c_4}{|z|^{d+\alpha}}\left(\sum_{i=0}^1\|f\|_{\M_{\beta},i}\right)(1+|x|)^{-d-\beta} \I_{\{1-6\sqrt{d}\e<|z|\le 1+6\sqrt{d}\e\}}.\label{eq:eohbbsd}
\end{align}

Furthermore, it holds that
\begin{equation}\label{l1-2-5}
\begin{split}
&\int_{\{|z|>1\}}|z|^{-d-\alpha}(1+|x+z|)^{-d-\beta}\,dz\\
&\le \left(\int_{\{|z|>1,|z|\le ({1+|x|})/{2}\}}+\int_{\{|z|>({1+|x|})/{2}\}}\right)|z|^{-d-\alpha}(1+|x+z|)^{-d-\beta}\,dz\\
&\le c_5(1+|x|)^{-d-\beta}\left(\int_{\{|z|>1\}}|z|^{-d-\alpha}\,dz\right)+c_5(1+|x|)^{-d-\alpha}\left(\int_{\R^d}(1+|x+z|)^{-d-\beta}\,dz\right)\\
&\le c_6\left[(1+|x|)^{-d-\alpha}+(1+|x|)^{-d-\beta}\right]\le c_7(1+|x|)^{-d-\beta_0}.
\end{split}
\end{equation}

Noting that $\xi_0(\e)$ appears by integrating \eqref{eq:eohbbsd},
we obtain the assertion \eqref{l1-2-1} by putting all the estimates above together
into \eqref{l1-2-4}.
\end{proof}

\begin{remark}\label{r2-2}
Intuitively speaking, the estimate for $\Lambda_\e^2 f$ in \eqref{l1-2-1} reflects the quantitative homogenized
error,  in terms of the coefficient
$K(y,y+z)$,
for the variable $z$ which describes the jump-size of the associated
Feller process $(X^\e_t)_{t\ge0}$. According to the proof, we know that such estimate may become
slower, when $f$ is less regular. Below, we take $\alpha\in (1,2)$ for example.  Suppose that $\|f\|_{\M_{\beta},0}+\|f\|_{\M_{\beta},1}<\infty$, and that there
exist a constant $\theta\in (\alpha-1,1)$ such that
\begin{equation}\label{r2-2-1}
|\nabla f(x)-\nabla f(x+z)|\le c_1(1+|x|)^{-d-\beta}|z|^{\theta},\quad x\in \R^d,\ z\in \R^d\ {\rm with}\ |z|\le 1,
\end{equation} where the constant $c_1$ may depend on $f$.
Then, by the proof of Lemma \ref{l1-2}, we can prove that for every $x\in \R^d$, $v\in \e\Z^d$
with $|v|\le 4\sqrt{d}\e$ and $z\in \Pi_{i=1}^d(v_i,v_i+\e]$,
$$
|H(x,z)-H(x,v)|
\le c_2(1+|x|)^{-d-\beta}
\left(|z|^{1+\theta-d-\alpha}+|v|^{1+\theta-d-\alpha}\right);$$ that for every $x\in \Z^d$,  $v\in \e\Z^d$ with $|v|>4\sqrt{d}\e$
and $z\in \Pi_{i=1}^d(v_i,v_i+\e]$,
\begin{align*}
&\quad I_1(x,z,v)
\le  \frac{c_2\e}{|z|^{d+\alpha+1}}
\Big[(1+|x|)^{-d-\beta}\left(|z|^{1+\theta}\I_{\{2\sqrt{d}\e<|z|\le 1\}}+\I_{\{|z|>1\}}\right)+(1+|x+z|)^{-d-\beta}\I_{\{|z|>1\}}\Big]
\end{align*} and
\begin{align*}
I_2(x,z,v)
&\le \frac{c_2\e}{|z|^{d+\alpha}}
\Bigg[(1+|x|)^{-d-\beta}|z|^\theta\I_{\{2\sqrt{d}\e<|z|\le 1-2\sqrt{d}\e\}} +(1+|x+z|)^{-d-\beta}\I_{\{|z|>1+2\sqrt{d}\e\}}\Bigg]\\
&\quad+ \frac{c_3}{|z|^{d+\alpha}}(1+|x|)^{-d-\beta}\I_{\{1-6\sqrt{d}\e<|z|\le 1+6\sqrt{d}\e\}},
\end{align*}
where $H(x,z)$, $I_1(x,z,v)$ and $I_2(x,z,v)$ are defined by the way as those in the proof of Lemma \ref{l1-2}.
With these estimates at hand, we can get that for all $x\in \R^d$ and $y \in \T^d$,
\begin{equation*}
|\Lambda_\e^2 f(x,y)|\le c_4\e^{1+\theta-\alpha}(1+|x|)^{-d-\beta_0},
\end{equation*}
which is worse than \eqref{l1-2-1}.
We note that the constants $c_i$ ($2\le i\le 4$) here may depend on $\|f\|_{\M_{\beta},0}$, $\|f\|_{\M_{\beta},1}$ and $c_1$.
We also note that, though the estimate for $\Lambda_\e^1 f$ in \eqref{l1-5-1} also partly indicates the quantitative homogenized
error
for the variable $z$ describing the jump-size, it essentially only depends on the density function $j(z)$ not the coefficient
$K(y,y+z)$.
\end{remark}

\begin{lemma}\label{l1-4} Let $\Theta_\e^1 f(x,y)$  be defined in \eqref{e1-5a}. For any $\beta>0$, there exists a positive constant $C_4:=C_4(\beta)$ such that for every $f\in \M_\beta$,  $x\in \R^d$ and $y\in \T^d$,
\begin{equation}\label{l1-4-1}
|\Theta_\e^1 f(x,y)|\le C_4\left(\sum_{k=0}^2\|f\|_{\M_{\beta},k}\right)(1+|x|)^{-d-\beta_0}.
\end{equation}
\end{lemma}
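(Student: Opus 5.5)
Since $\bar K(y)-\bar K$ does not depend on $z$, the factor pulls out of the integral:
$$
\Theta_\e^1 f(x,y)=\left(\bar K(y)-\bar K\right)\int_{\R^d}\delta f(x;z)\frac{dz}{|z|^{d+\alpha}} .
$$
The integral here is exactly the (non-p.v.) form of $\bar\sL_\alpha f(x)/\bar K$, because the compensator $\langle\nabla f(x), z\I_{\{|z|\le1\}}\rangle$ integrates to zero by symmetry. Since $K_1\le \bar K(y),\bar K\le K_2$, we have $|\bar K(y)-\bar K|\le K_2-K_1$. It therefore suffices to prove
$$
\int_{\R^d}|\delta f(x;z)|\,|z|^{-d-\alpha}\,dz\le c\left(\sum_{k=0}^2\|f\|_{\M_\beta,k}\right)(1+|x|)^{-d-\beta_0},
$$
with $c$ independent of $y$ and $\e$.

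To get this bound, I will insert the pointwise estimate \eqref{l1-5-2} from the proof of Lemma \ref{l1-5}. That estimate gives $|\delta f(x;z)|$ bounded by
$$
\left(\sum_{i=0}^2\|f\|_{\M_\beta,i}\right)(1+|x|)^{-d-\beta}\left(|z|^2\I_{\{|z|\le1\}}+\I_{\{|z|>1\}}\right)+\|f\|_{\M_\beta,0}(1+|x+z|)^{-d-\beta}\I_{\{|z|>1\}} .
$$
The first part, integrated against $|z|^{-d-\alpha}$, produces the finite constant $\int(1\wedge|z|^2)|z|^{-d-\alpha}\,dz$ times $(1+|x|)^{-d-\beta}$. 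This is where $\alpha\in(0,2)$ is used.

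The second part is $\int_{\{|z|>1\}}|z|^{-d-\alpha}(1+|x+z|)^{-d-\beta}\,dz$, which is already bounded in \eqref{l1-2-5} by $c(1+|x|)^{-d-\beta_0}$. That bound comes from splitting at $|z|=(1+|x|)/2$: on the small-$z$ region one has $|x+z|\ge(|x|-1)/2$, and on the large-$z$ region $|z|^{-d-\alpha}\le c(1+|x|)^{-d-\alpha}$. This second term is the only place where the exponent degrades from $\beta$ to $\beta_0=\min\{\alpha,\beta\}$, and it is the step requiring the most care.

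Combining the two parts, and using $(1+|x|)^{-d-\beta}\le(1+|x|)^{-d-\beta_0}$, gives \eqref{l1-4-1} with $C_4=c(K_2-K_1)$, uniformly in $y$ and $\e$.
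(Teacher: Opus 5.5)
Your proposal is correct and takes the same route as the paper. The paper's proof just cites \eqref{l1-5-2} and \eqref{l1-2-5}. Pulling out the bounded factor $\bar K(y)-\bar K$, bounding the local part via $\int(1\wedge|z|^2)|z|^{-d-\alpha}\,dz<\infty$, and handling the tail part by the splitting at $|z|=(1+|x|)/2$ is exactly that argument.
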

\begin{proof}
According to \eqref{l1-5-2} and \eqref{l1-2-5}, we can prove \eqref{l1-4-1} directly.
\end{proof}

\begin{lemma}\label{l1-3} Let $\Gamma_\e^1(f,\phi)(x,y)$  be defined by \eqref{e1-6}.
For every $\beta>0$, there exists a constant $C_5:=C_5(\beta)>0$ such that for every $f\in \M_\beta$, $\phi\in C^1(\T^d)$, $x\in \R^d$ and $y\in \T^d$,
\begin{equation}\label{l1-3-1}
 |\Gamma_\e(f,\phi)(x,y)|\le C_5\e^{-1}\left(\gamma_2(\e)+\gamma_{3,\beta}(\e)\right)\left(\sum_{k=0}^1\|f\|_{\M_{\beta},k} \right)
\left(\|\phi\|_\infty+\|\nabla \phi\|_\infty\right)(1+|x|)^{-d-\beta},
\end{equation} where $\gamma_2(\e)$ and $\gamma_{3,\beta}(\e)$ are defined  in \eqref{e1-7}.
\end{lemma}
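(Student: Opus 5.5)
We have to bound
$$\Gamma_\e(f,\phi)(x,y)=\varphi(\e)\e^{-d}\int (f(x+z)-f(x))(\phi(y+\e^{-1}z)-\phi(y))K(y,y+\e^{-1}z)\,j(\e^{-1}z)\,dz.$$
The plan is to change variables $z=\e w$, which gives
$$\Gamma_\e=\varphi(\e)\int (f(x+\e w)-f(x))(\phi(y+w)-\phi(y))K\,j(w)\,dw.$$
We then split the $w$-integral into three regions: $|w|\le 4\sqrt d$, $4\sqrt d<|w|\le \e^{-1}$, and $|w|>\e^{-1}$. Throughout, $K\le K_2$. Each region will produce one term of $\gamma_2(\e)$ or of $\gamma_{3,\beta}(\e)$, multiplied by $\e^{-1}$.

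\emph{Small jumps, $|w|\le 4\sqrt d$.} For the increment of $f$, the mean value theorem gives
$$|f(x+\e w)-f(x)|\le \e|w|\sup_{|u|\le 4\sqrt d}|\nabla f(x+u)|\le c\,\e|w|\,\|f\|_{\M_\beta,1}(1+|x|)^{-d-\beta},$$
using $\e<1$ and $(1+|x+u|)\ge c(1+|x|)$ for bounded $u$. For the increment of $\phi$ we use $|\phi(y+w)-\phi(y)|\le \|\nabla\phi\|_\infty|w|$. Together these give the bound
$$\varphi(\e)\,\e\int_{\{|w|\le4\sqrt d\}}|w|^2j(w)\,dw=\e^{-1}\varphi(\e)\e^2\int_{\{|w|\le 4\sqrt d\}}|w|^2 j,$$
which is the first term of $\e^{-1}\gamma_2$.

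\emph{Intermediate jumps, $4\sqrt d<|w|\le \e^{-1}$.} Here $\e|w|\le1$. We bound the $\phi$-increment by $2\|\phi\|_\infty$ and the $f$-increment by $c\,\e|w|\,\|f\|_{\M_\beta,1}(1+|x|)^{-d-\beta}$, using $|\e w|\le 1$. This yields
$$\varphi(\e)\,\e\int_{\{1<|w|\le\e^{-1}\}}|w|\,j=\e^{-1}\varphi(\e)\e^2\int_{\{1<|w|\le \e^{-1}\}}|w|\,j,$$
the second term of $\e^{-1}\gamma_2$.

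\emph{Large jumps, $|w|>\e^{-1}$.} We return to the variable $z$, so that $|z|>1$, and bound the $\phi$-factor by $2\|\phi\|_\infty$. We write $|f(x+z)-f(x)|\le |f(x+z)|+|f(x)|$.
\begin{itemize}
\item The $|f(x)|$ part gives $\|f\|_{\M_\beta,0}(1+|x|)^{-d-\beta}\varphi(\e)\int_{\{|w|>\e^{-1}\}}j$. This equals $\e^{-1}$ times the last term of $\gamma_2$ (and of $\gamma_3$).
\item For the $|f(x+z)|$ part we split as in the proof of Lemma~\ref{l1-5}.
 \begin{itemize}
 \item On $\{1<|z|\le(1+|x|)/2\}$ we have $|x+z|\ge (|x|-1)/2$, so $(1+|x+z|)^{-d-\beta}\le c(1+|x|)^{-d-\beta}$. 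The integral of $\varphi(\e)\e^{-d}j(\e^{-1}z)$ over $|z|>1$ is again $\varphi(\e)\int_{\{|w|>\e^{-1}\}}j$.
 \item On $\{|z|\ge(1+|x|)/2\}$ we bound $j(\e^{-1}z)$ by its supremum there and use $\int(1+|x+z|)^{-d-\beta}\,dz<\infty$. This gives
 $$c\,\varphi(\e)\e^{-d}\sup_{|z|\ge(1+|x|)/2}j(\e^{-1}z)\le c\,\varphi(\e)\e^{-d}(1+|x|)^{-d-\beta}\,\e^{-1}\cdot\e\sup_x\{\cdots\},$$
 that is, $\e^{-1}\gamma_{3,\beta}(\e)(1+|x|)^{-d-\beta}$ by the definition of $\gamma_{3,\beta}$.
 \end{itemize}
\end{itemize}

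Summing the three regions gives \eqref{l1-3-1}. The only delicate point is this last large-jump region: one must track the power $\e^{-d+1}$ in $\gamma_{3,\beta}$ exactly, so that the prefactor $\e^{-1}$ comes out. All other steps are routine mean value estimates.
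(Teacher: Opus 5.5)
Your proof is correct and follows essentially the same route as the paper. You change variables $z=\e w$, apply mean-value bounds to the $f$- and $\phi$-increments in the small, intermediate and large jump regions, and for the tail term split at $|z|=(1+|x|)/2$ to obtain the factor $\e^{-1}\gamma_{3,\beta}(\e)$. The only difference is that you cut the $\phi$-increment at $|w|=4\sqrt d$ rather than at $|w|=1$. This is harmless: your regions overlap when $\e^{-1}<4\sqrt d$, but that only overcounts a nonnegative integrand on which both of your bounds remain valid.
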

\begin{proof}
By the
change of variables,
we find
\begin{align*}
\Gamma_\e(f,\phi)(x,y)=\varphi(\e)\int_{\R^d}\left(f(x+\e z)-f(x)\right)
\left(\phi\left(y+z\right)-\phi\left(y\right)\right)K\left(y,y+z\right)j(z)\,dz.
\end{align*}
By the mean value theorem, we have
\begin{align*}
\left|f(x+\e z)-f(x)\right|&\le c_1
\Big(\e\|f\|_{\M_{\beta},1}(1+|x|)^{-d-\beta}|z|\I_{\{|z|\le \e^{-1}\}}\\
&\quad\quad\quad+
\|f\|_{\M_{\beta},0}\left((1+|x|)^{-d-\beta}+(1+|x+\e z|)^{-d-\beta}\right)\I_{\{|z|>\e^{-1}\}}\Big)
\end{align*} and
\begin{align}\label{l1-3-4}
\left|\phi(y+z)-\phi(y)\right|&\le c_1
\left(\|\nabla \phi\|_\infty |z|\I_{\{|z|\le 1\}}+\|\phi\|_\infty \I_{\{|z|>1\}}\right).
\end{align}
Then,
\begin{align*}
 |\Gamma_\e(f,\phi)(x,y)|
&\le   c_2\varphi(\e)
\left(\|f\|_{\M_{\beta},0}+\|f\|_{\M_{\beta},1}\right)
\left(\|\phi\|_\infty+\|\nabla \phi\|_\infty\right)\\
&\quad\times\Bigg[(1+|x|)^{-d-\beta}\bigg(\e\int_{\{|z|\le 1\}}|z|^{2}j(z)\,dz
+\e\int_{\{1<|z|\le \e^{-1}\}}|z|j(z)\,dz+
\int_{\{|z|>\e^{-1}\}}j(z)\,dz\bigg)\\
&\quad\quad\quad +\int_{\{|z|>\e^{-1}\}}j(z)(1+|x+\e z|)^{-d-\beta}\,dz\Bigg].
\end{align*}

Furthermore, we get
\begin{equation}\label{l1-3-5}
\begin{split}
&\varphi(\e)\int_{\{|z|>\e^{-1}\}}j(z)(1+|x+\e z|)^{-d-\beta}\,dz\\
&=\varphi(\e)
\e^{-d}\int_{\{|z|>1\}}j \left(\e^{-1} z \right)(1+|x+z|)^{-d-\beta}\,dz\\
&\le \varphi(\e)
\e^{-d}\left(\int_{\{|z|>1,|z|\le ({1+|x|})/{2}\}}+\int_{\{|z|>({1+|x|})/{2}\}}\right)(1+|x+z|)^{-d-\beta}j \left(\e^{-1} z \right)\,dz\\
&\le c_3(1+|x|)^{-d-\beta}\Bigg[\varphi(\e)\int_{\{|z|>\e^{-1}\}}j(z)\,dz\\
&\qquad\quad+\varphi(\e)\e^{-d}\sup_{x\in \R^d}\left\{(1+|x|)^{d+\beta} \sup_{z\in \R^d:|z|\ge \frac{1+|x|}{2}}j \left(\e^{-1} z \right)\right\}\int_{\{|z|>1\}}(1+|x+z|)^{-d-\beta}dz\Bigg]\\
&\le c_4(1+|x|)^{-d-\beta}\e^{-1}\gamma_{3,\beta}(\e),
\end{split}
\end{equation}
where the last  inequality follows from the definition of $\gamma_{3,\beta}(\e)$.

Hence, combining the above estimates
yields the desired conclusion \eqref{l1-3-1}.
\end{proof}

\begin{remark}\label{r1-1}
Let
\begin{equation}\label{r1-1-1a}
\|\phi\|_{\theta}:=\begin{cases}\sup_{x,y\in \T^d}\frac{\left|\phi(x)-\phi(y)\right|}{|x-y|^{\theta}},\quad &\theta\in(0,1),\\
\|\phi\|_\infty=\sup_{x\in \T^d}|\phi(x)|,\quad& \theta=0.\end{cases}
\end{equation}
Using
$
\left|\phi(y+z)-\phi(y)\right|\le 2\|\phi\|_\infty
$
for $\theta=0$ or
$
\left|\phi(y+z)-\phi(y)\right|\le \|\phi\|_{\theta}|z|^\theta
$
for $\theta\in (0,1)$ in the place of \eqref{l1-3-4}, and following the proof of Lemma \ref{l1-3}, we can get that, \emph{if there is a constant $\theta\in [0,1)$ so that
\begin{equation}\label{r1-1-1}
\int_{\{|z|\le 1\}}|z|^{1+\theta}j(z)\,dz<\infty,
\end{equation} then, for every $\beta>0$, there exists a constant $C_6:=C_6(\beta)>0$ such that for every $f\in \M_\beta$, $\phi\in C^1(\T^d)$, $x\in \R^d$ and $y\in \T^d$,
\begin{equation}\label{r1-1-2}
|\Gamma_\e(f,\phi)(x,y)|\le  C_6\e^{-1}\left(\tilde \gamma_2(\e)+\gamma_{3,\beta}(\e)\right)\left(\sum_{i=0}^1\|f\|_{\M_{\beta},i} \right)
(1+|x|)^{-d-\beta}
\begin{cases}
 \|\phi\|_\infty+\|\phi\|_\theta ,\ &\ \theta\in (0,1),\\
\|\phi\|_\infty,\ &\ \theta=0,
\end{cases}
\end{equation}
where
\begin{equation}\label{r1-1-3}
\tilde \gamma_2(\e):=\varphi(\e) \left(\e^2\int_{\{|z|\le 1\}}|z|^{1+\theta}j(z)\,dz+\e^2\int_{\{1<|z|\le \e^{-1}\}}|z|j(z)\,dz+\e\int_{\{|z|>\e^{-1}\}}j(z)\,dz\right).
\end{equation}}
\end{remark}

Then, we have the following statement.
\begin{lemma}\label{l1-6}
Suppose that \eqref{t1-1-1} holds for some $\beta>0$. Let $\Upsilon_\e^1 f(x,y)$ be defined by \eqref{l2-1-3a}. Then there exists a positive constant $C_7:=C_7(\beta)$ such that for every $f\in \M_\beta$, $x\in \R^d$ and $y\in \T^d$,
\begin{equation}\label{l1-6-1}
|\Upsilon_\e^1 f(x,y)|\le C_7\left(\sum_{i=0}^2\|f\|_{\M_{\beta},i}\right)(1+|x|)^{-d-\beta_0}.
\end{equation}
\end{lemma}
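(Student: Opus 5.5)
The bound \eqref{l1-6-1} follows by estimating each term in the definition \eqref{l2-1-3a} of $\Upsilon_\e^1 f$ with the lemmas already proved. Every coefficient that appears will be dominated by a constant multiple of $\xi_{1,\beta}(\e)$, $1$, or $\e\gamma_4(\e)$. Under \eqref{t1-1-1} each of these is bounded uniformly in $\e\in(0,1)$, possibly after shrinking the range of $\e$; for larger $\e$ the same quantities are finite and can be absorbed into the constant. Throughout we use $(1+|x|)^{-d-\beta}\le (1+|x|)^{-d-\beta_0}$.

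\emph{Terms without $\nabla f$.} By Lemma \ref{l1-5}, $|\Lambda^1_\e f|\le C_2\gamma_{1,\beta}(\e)(\sum_{i\le2}\|f\|_{\M_\beta,i})(1+|x|)^{-d-\beta}$, and $\gamma_{1,\beta}(\e)\le \xi_{1,\beta}(\e)$. Lemma \ref{l1-2} gives the bound $\xi_0(\e)\le \xi_{1,\beta}(\e)$ times the same norm, and Lemma \ref{l1-4} handles $\Theta^1_\e f$ with an $\e$-independent constant.

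\emph{The $\Gamma_\e$ terms.} Apply Lemma \ref{l1-3} with $\phi=\phi^\e_{0,i}$ and $f$ replaced by $\partial_{x_i}f$. This produces the factor $\e\cdot\e^{-1}(\gamma_2(\e)+\gamma_{3,\beta}(\e))$ times $\|\phi_0^\e\|_\infty+\|\nabla\phi_0^\e\|_\infty$. By \eqref{e2-2a} the last factor is at most $c_0\|\Phi_\e\|_\infty$. To bound $\|\Phi_\e\|_\infty$, symmetrize the principal value using $j(z)=j(-z)$:
\[
\Phi_\e(y)=\tfrac12\int_{\{|z|\le\e^{-1}\}}z\bigl(K(y,y+z)-K(y,y-z)\bigr)j(z)\,dz .
\]
Since $K\in C^1_b$, the integrand is $O(|z|^2)j(z)$ for $|z|\le1$ and $O(|z|)j(z)$ for $1<|z|\le\e^{-1}$. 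Hence $\|\Phi_\e\|_\infty\le c\gamma_4(\e)$. This part therefore contributes $(\gamma_2+\gamma_{3,\beta})\gamma_4\le \xi_{1,\beta}$, and the $f$-norms involved are $\sum_{i\le2}\|f\|_{\M_\beta,i}$.

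\emph{The $\e\langle\cdot,\phi_0^\e\rangle$ terms.} We need $|\bar\sL_\alpha(\nabla f)(x)|\le c(\sum_{i\le3}\|f\|_{\M_\beta,i})(1+|x|)^{-d-\beta_0}$, which follows from \eqref{l1-5-2} and \eqref{l1-2-5}. Combined with Lemmas \ref{l1-5}, \ref{l1-2} and \ref{l1-4} applied to $\nabla f$, and with $\|\phi^\e_0\|_\infty\le c\gamma_4(\e)$, these terms are bounded by $c\,\e\gamma_4(\e)(1+\gamma_{1,\beta}+\xi_0)$. This is at most $c\,\xi_{1,\beta}$, because $\e\gamma_4$ and $(\xi_0+\gamma_{1,\beta})\e\gamma_4$ both appear in $\xi_{1,\beta}$.

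\emph{Remaining terms.} The term $\e^2\varphi(\e)\langle\nabla^2 f,\phi_0^\e\otimes\Phi_\e\rangle$ is bounded by $c\,\e^2\varphi(\e)\gamma_4(\e)^2\|f\|_{\M_\beta,2}$, and $\e^2\varphi\gamma_4^2\le\xi_{1,\beta}$. The term $\lambda\e\langle\nabla f,\phi_0^\e\rangle$ is bounded by $c\,\e\gamma_4\|f\|_{\M_\beta,1}$.

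\emph{Main obstacle: the order of derivatives.} Applying Lemmas \ref{l1-5}, \ref{l1-2}, \ref{l1-4} to $\nabla f$, and bounding $\bar\sL_\alpha\nabla f$, naturally uses $\|f\|_{\M_\beta,3}$, whereas \eqref{l1-6-1} lists only $i\le2$. I would first try to remove the third derivative by exploiting the factor $\e$. Split the integrals in $\Lambda^i_\e(\nabla f)$ and $\bar\sL_\alpha(\nabla f)$ at $|z|=\e$. For $|z|\le\e$, use only the second-derivative bound $|\delta(\nabla f)(x;z)|\le c|z|\,\|f\|_{\M_\beta,2}$. For $|z|>\e$, use $|\delta(\nabla f)|\le c\min(|z|,1)\|f\|_{\M_\beta,2}$ plus the tail term. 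This gives $\e\int_{\{\e<|z|\le1\}}|z|^{1-d-\alpha}dz$, which is $O(\xi_0(\e))$ or $O(1)$ depending on $\alpha$. If this does not close, I would fall back to stating the bound with $\sum_{i\le3}\|f\|_{\M_\beta,i}$, which is harmless since $f\in\M_\beta$ controls every derivative. The uniform bound on the coefficients from \eqref{t1-1-1} then finishes the proof.
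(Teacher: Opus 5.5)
Your term-by-term argument is the paper's proof. The paper bounds $\|\Phi_\e\|_\infty\le c\gamma_4(\e)$; your symmetrization is exactly what it calls ``clear''. It then gets $\|\phi_0^\e\|_\infty+\|\nabla\phi_0^\e\|_\infty\le c\gamma_4(\e)$ from \eqref{e2-2a}, applies Lemmas \ref{l1-5}, \ref{l1-2}, \ref{l1-4}, \ref{l1-3} term by term, bounds every coefficient except that of $\Theta^1_\e f$ by $c\,\xi_{1,\beta}(\e)$, and concludes with \eqref{t1-1-1}. Your matching of each product against the summands of $\xi_{1,\beta}$ is correct.

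\textbf{The third derivative.} You are right, and the paper is silent on this, that the term $\e\langle\bar\sL_\alpha(\nabla f)+\sum_i\Lambda^i_\e(\nabla f)+\Theta^1_\e(\nabla f),\phi_0^\e\rangle$ costs $\|f\|_{\M_\beta,3}$. Compare Lemma \ref{l3-4}, the critical analogue, which is stated with $i\le 3$.

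\textbf{Your repair fails.} On $\{|z|\le\e\}$ the bound $|\delta(\nabla f)(x;z)|\le c|z|\,\|f\|_{\M_\beta,2}$ leads to $\int_{\{|z|\le\e\}}|z|^{1-d-\alpha}\,dz=\infty$ whenever $\alpha\ge1$. The same happens in $\Lambda^1_\e(\nabla f)$: in the Appendix setting $j(z)=c_0|z|^{-d-\alpha_0}$ near $0$ with $\alpha_0\in(1,2)$, its kernel on $\{|z|\le\e\}$ is $c_0\varphi(\e)\e^{\alpha_0}|z|^{-d-\alpha_0}$.

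\textbf{No other splitting can work.} The four pieces recombine into
$\bar\sL_\alpha g+\Lambda^1_\e g+\Lambda^2_\e g+\Theta^1_\e g=\varphi(\e)\int_{\R^d}\delta g(x;\e z)K(y,y+z)j(z)\,dz$,
which is an operator of order $\alpha_0>1$ at small scales. Take $f_n(x)=n^{-2}\chi(x)\sin(n\langle v,x\rangle)$ with $\chi\in C_c^\infty(\R^d)$ and $v$ parallel to some $\phi_0^\e(y)\neq0$. Then $\sum_{i\le2}\|f_n\|_{\M_\beta,i}$ and all other terms of $\Upsilon^1_\e f_n$ stay bounded. The term above, however, grows like $n^{\alpha_0-1}$ at points where $\chi=1$ and $\cos(n\langle v,x\rangle)=1$. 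So \eqref{l1-6-1} with $\sum_{i\le2}$ fails in general.

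\textbf{What to do.} Your fallback with $\sum_{i\le3}\|f\|_{\M_\beta,i}$ is the correct statement. It is exactly what the cited lemmas yield, and it is harmless downstream: the index ranges $k,\dots,k+2$ in \eqref{e2-4}, \eqref{e2-9a}, \eqref{p2-1-5} simply become $k,\dots,k+3$. Drop the splitting and commit to that version.

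\textbf{A minor point.} Your claim that the coefficients are finite for $\e$ bounded away from $0$ is unjustified, since nothing constrains $\varphi$ there. Restrict instead to $\e\le\e_0$ with $\xi_{1,\beta}(\e)\le 1$. That is all \eqref{t1-1-1} provides, and all the paper's proof uses.
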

\begin{proof}
Let $\Phi_\e(y)$ be defined in  \eqref{e2-1a}. It is clear that there is a constant $c_1>0$ such that
\begin{equation}\label{e:PHI}
\|\Phi_\e\|_\infty\le c_1\left(1+\int_{\{1<|z|\le \e^{-1}\}}|z|j(z)\,dz\right)=  c_1\gamma_4(\e).
\end{equation}
Thus, by \eqref{e2-2a}, the solution $\phi_0^\e$ to \eqref{e2-2} satisfies
\begin{equation}\label{l1-6-3}
\|\phi_0^\e\|_\infty+\|\nabla \phi_0^\e\|\le c_2\|\Phi_\e\|_\infty
\le c_3\gamma_4(\e).
\end{equation}

According to \eqref{l1-5-1}, \eqref{l1-2-1},  \eqref{l1-4-1}, \eqref{l1-3-1} and \eqref{l1-6-3}, we have
\begin{equation}\label{l1-6-2}
\begin{split}
|\Upsilon_\e^1 f(x,y)|&\le c_4\left[|\Theta_\e^1 f(x,y)|+\left(\sum_{i=0}^{2}\|f\|_{\M_{\beta},i}\right)
\xi_{1,\beta}(\e)(1+|x|)^{-d-\beta_0}\right]\\
&\le c_5\left(\sum_{i=0}^{2}\|f\|_{\M_{\beta},i}\right)\left(1+\xi_{1,\beta}(\e)\right)(1+|x|)^{-d-\beta_0}\\
&\le c_6\left(\sum_{i=0}^{2}\|f\|_{\M_{\beta},i}\right)(1+|x|)^{-d-\beta_0},
\end{split}
\end{equation}
where in the first inequality we used the definition of $\xi_{1,\beta}(\e)$, and the last inequality follows from  \eqref{t1-1-1}.
\end{proof}

\subsection{Critical regime}
In this subsection, we suppose that Assumption \ref{a1-1}(ii), Assumption \ref{a1-2} and Assumption \ref{a2-1} hold.

\begin{lemma}\label{l3-1}
Let $g\in \M_{\beta}$ for some $\beta>0$, and let $u_g\in C_b^\infty(\R^d)$ be the solution to following equation:
$$
\lambda u_g(x)-\bar \sL_2 u_g(x)=g(x),\quad x\in \R^d,
$$ where $\sL_2$ is given by \eqref{e2-1a-}.
Then, $u_g\in \M_{\beta}$, and there exists a constant $C_1:=C_1(\beta)>0$ $($which is independent of $g$$)$ such that for all $k\ge0$,
$$
\|u_g\|_{\M_{\beta},k}\le C_1\|g\|_{\M_{\beta},k} .
$$
\end{lemma}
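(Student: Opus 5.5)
I will follow the proof of Lemma \ref{l1-1}, replacing the $\alpha$-stable heat kernel by the Gaussian kernel of $\bar\sL_2=\frac12\langle \bar A,\nabla^2\cdot\rangle$. First I must check that $\bar A=\bar K A$ is positive definite. The matrix $A$ in \eqref{a1-1-3} is a limit of the nonnegative definite matrices $\e^2\varphi(\e)\int_{\{|z|\le \e^{-1}\}}(z\otimes z)j(z)\,dz$, so it is symmetric and nonnegative definite. It is assumed non-trivial. If it were only semidefinite, the estimate would still hold, but only in the nondegenerate directions. I will simply use that $\bar A$ is positive definite, which is implicit in \eqref{e1-3} having a well-defined resolvent with Gaussian heat kernel. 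Then the semigroup $\bar T_t$ has kernel
\[
\bar p_2(t,x)=(2\pi t)^{-d/2}(\det \bar A)^{-1/2}\exp\left(-\frac{\langle \bar A^{-1}x,x\rangle}{2t}\right)\le c_1t^{-d/2}e^{-c_2|x|^2/t}.
\]

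Since $\bar\sL_2$ has constant coefficients, derivatives commute with the semigroup. Exactly as in Lemma \ref{l1-1},
\[
\nabla^k u_g(x)=\int_0^\infty e^{-\lambda t}\int_{\R^d}\bar p_2(t,x-y)\nabla^k g(y)\,dy\,dt,
\]
and therefore
\[
|\nabla^k u_g(x)|\le \|g\|_{\M_\beta,k}\int_0^\infty e^{-\lambda t}\int_{\R^d}\bar p_2(t,x-y)(1+|y|)^{-d-\beta}\,dy\,dt.
\]
As before, I split the $y$-integral into $I_1(t)$ over $\{|y|\le |x|/2\}$ and $I_2(t)$ over $\{|y|>|x|/2\}$.

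For $I_2$, I use $(1+|y|)^{-d-\beta}\le c(1+|x|)^{-d-\beta}$ together with $\int\bar p_2(t,\cdot)=1$, which gives $I_2(t)\le c(1+|x|)^{-d-\beta}$.

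For $I_1$ with $|x|>2$, I have $|x-y|\ge |x|/2$. Using $s^{m}e^{-s}\le c_m$ with $m=(d+\beta)/2$, I get
\[
\bar p_2(t,x-y)\le c\,t^{-d/2}\,(t/|x|^2)^{(d+\beta)/2}= c\,t^{\beta/2}|x|^{-d-\beta}.
\]
Hence $I_1(t)\le c\,t^{\beta/2}(1+|x|)^{-d-\beta}\int(1+|y|)^{-d-\beta}\,dy$. For $|x|\le 2$, I use the trivial bound $I_1(t)\le 1\le c(1+|x|)^{-d-\beta}$.

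Integrating against $e^{-\lambda t}$ gives the finite factor $\int_0^\infty(1+t^{\beta/2})e^{-\lambda t}\,dt$. This yields $|\nabla^k u_g(x)|\le C_1\|g\|_{\M_\beta,k}(1+|x|)^{-d-\beta}$ with $C_1$ independent of $g$ and $k$.

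The main point, and the only difference from Lemma \ref{l1-1}, is that the Gaussian tail decays faster than any polynomial. This is why no loss $\beta\to\beta_0$ occurs: the trade of $e^{-c|x|^2/t}$ for $(t/|x|^2)^{(d+\beta)/2}$ costs only a power of $t$, and that power is absorbed by $e^{-\lambda t}$. The only possible obstacle is the nondegeneracy of $\bar A$, which I take as given by the setup.
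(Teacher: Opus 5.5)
Your argument is correct and is essentially the paper's proof: the paper also combines the Gaussian bound $\bar p(t,x)\le c_1t^{-d/2}e^{-c_2|x|^2/t}$ with the splitting from Lemma \ref{l1-1}, and it likewise takes the nondegeneracy of $\bar A$ for granted, as you do. The only cosmetic difference is the order of operations: the paper first integrates in $t$ to get exponential decay $c\,e^{-c|x|}$ of the resolvent kernel for $|x|\ge 2$, whereas you convert $e^{-c|x|^2/t}$ into $(t/|x|^2)^{(d+\beta)/2}$ pointwise in $t$ and let $e^{-\lambda t}$ absorb the resulting factor $t^{\beta/2}$.

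One small correction to your aside on the semidefinite case: the estimate does not hold "only in the nondegenerate directions" but in full. The bound on $I_1(t)$ only requires the tail estimate $\Pp(|\bar X_t-\bar X_0|\ge |x|/2)\le Ce^{-c|x|^2/t}$, not a density, so it survives even when $\bar A$ is degenerate.
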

\begin{proof}
As explained in the proof of Lemma \ref{l1-1}, it holds that for all $k\ge0$ and $x\in \R^d$,
$$
\nabla^k u_g(x)=\int_{0}^\infty e^{-\lambda t} \int_{\R^d}\bar p(t,x-y)\nabla^k g(y)\,dy \,dt,
$$
where $\{\bar p(t,x)\}_{t>0,x\in \R^d}$ denotes the heat kernel associated with $\bar \sL_2$.

Since
$$
\bar p(t,x)\le c_1t^{-d/2}\exp\left(-\frac{c_2|x|^2}{t}\right),\quad t>0,\ x\in \R^d,
$$ for all $x\in \R^d$ with $|x|\ge2$,
\begin{align*}
\int_0^\infty t^{-d/2} \exp \left(-\frac{c_2|x|^2}{t}\right) e^{-\lambda t}\,dt\le &c_3\int_{|x|^2}^\infty t^{-d/2}e^{-\lambda t}\,dt+c_3|x|^{-d}\int_0^{|x|^2} \exp \left(-\frac{c_4|x|^2}{t}\right) e^{-\lambda t}\,dt\\
\le &c_5 \exp(-c_6|x|)\le c_7(1+|x|)^{-\beta}.
\end{align*}
With this at hand, one can follow the proof of Lemma \ref{l1-1} and  prove the desired conclusion.
\end{proof}
\begin{remark}\label{r3-1a}
 By Lemma \ref{l5-2} below,
we know that the $d\times d$ matrix $\bar A_0$ defined by \eqref{e2-3-2} is strictly positive definite
 under
Assumption {\rm\ref{a1-1}(iii)} and Assumption {\rm\ref{a1-2}}.
Then, according to the proof of Lemma \ref{l3-1} and the expression of $\bar \sL_{>2}$, the assertion of Lemma \ref{l3-1} still holds true with $\bar \sL_{>2}$ in place of $\bar \sL_2$.   \end{remark}

\begin{lemma}\label{l3-2}  Let $\Lambda_\e^3 f(x,y)$ be defined in \eqref{e3-3a}.
For every $\beta>0$, there exists a positive constant $C_2:=C_2(\beta)$ such that for every $f\in \M_{\beta}$, $x\in \R^d$ and $y \in \T^d$,
\begin{equation}\label{l3-2-1}
|\Lambda_\e^3 f(x,y)|\le C_2\left(\gamma_2(\e)+\e^{-1}\gamma_{3,\beta}(\e)+\gamma_{5}(\e)+\gamma_{6, \beta}(\e)\right)\left(\sum_{i=0}^2\|f\|_{\M_{\beta},i}\right)(1+|x|)^{-d-\beta},
\end{equation} where $\gamma_2(\e)$ and $\gamma_{3,\beta}(\e)$ are defined in \eqref{e1-7}, and $\gamma_{5}(\e)$ and $\gamma_{6, \beta}(\e)$ are defined in \eqref{e3-5a}.
\end{lemma}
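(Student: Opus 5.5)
The plan is to mimic the proof of Lemma \ref{l1-2}, with $\e^{-d}\varphi(\e)j(\e^{-1}z)$ in place of $|z|^{-d-\alpha}$. The periodicity of $K$ is again exploited through a cell decomposition. Set
$$H_\e(x,z):=\e^{-d}\varphi(\e)\,\delta f(x;z)\,j(\e^{-1}z)\quad\text{for } z\neq 0,$$
and split $\R^d$ into the cubes $Q_v:=\Pi_{i=1}^d(v_i,v_i+\e]$, $v\in\e\Z^d$. Since $\int_{Q_v}K(y,y+\e^{-1}z)\,dz=\e^d\bar K(y)$, we have, exactly as in \eqref{l1-2-4},
$$\Lambda_\e^3 f(x,y)=\sum_{v}\int_{Q_v}\big(H_\e(x,z)-H_\e(x,v)\big)\big(K(y,y+\e^{-1}z)-\bar K(y)\big)\,dz .$$
It then suffices to bound $\sum_v\int_{Q_v}|H_\e(x,z)-H_\e(x,v)|\,dz$ by the right-hand side of \eqref{l3-2-1}. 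I will split the cells into three groups according to the size of $|v|$.

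\emph{Small cells, $|v|\le 4\sqrt d\,\e$.} Here I bound $|H_\e(x,z)|+|H_\e(x,v)|$ crudely using \eqref{l1-5-2}, i.e. $|\delta f(x;z)|\le c(1+|x|)^{-d-\beta}|z|^2$. After the substitution $z\mapsto \e z$ this contributes at most $c\,\varphi(\e)\e^2\int_{\{|z|\le 4\sqrt d\}}|z|^2 j(z)\,dz\,(1+|x|)^{-d-\beta}$, which is part of $\gamma_2(\e)$. (The point value $H_\e(x,v)$ is harmless: averaging over $v$ is comparable to the integral, or one can replace $v$ by any point of the cell.)

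\emph{Intermediate cells, $4\sqrt d\,\e<|v|\le 1$ in the original variable.} Write, as in the proof of Lemma \ref{l1-2},
$$H_\e(x,z)-H_\e(x,v)=\delta f(x;z)\,\e^{-d}\varphi(\e)\big(j(\e^{-1}z)-j(\e^{-1}v)\big)+\e^{-d}\varphi(\e)\,j(\e^{-1}v)\big(\delta f(x;z)-\delta f(x;v)\big)=:I_1+I_2 .$$
For $I_1$, the rescaled points satisfy $|\e^{-1}z|,|\e^{-1}v|\ge 4$ and $|\e^{-1}(z-v)|\le\sqrt d$. Chaining Assumption \ref{a2-1} over boundedly many unit steps gives $|j(\e^{-1}z)-j(\e^{-1}v)|\le c\,a_0(\e^{-1}z)$, provided $a_0$ is comparable along these short chains (if needed, I would adjust $a_0$ or use the nearby point). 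Together with $|\delta f(x;z)|\le c(1+|x|)^{-d-\beta}|z|^2$, this yields $\varphi(\e)\e^2\int_{\{1\le|z|\le\e^{-1}\}}|z|^2a_0(z)\,dz$, i.e. $\gamma_5(\e)$. For $I_2$, the mean value theorem gives $|\delta f(x;z)-\delta f(x;v)|\le c\,\e|z|(1+|x|)^{-d-\beta}$ (via $\nabla^2 f$), plus a boundary layer near $|z|=1$ as in \eqref{eq:eohbbsd}. This produces $\varphi(\e)\e^2\int_{\{1<|z|\le\e^{-1}\}}|z|\,j(z)\,dz$ and a term $\varphi(\e)\e\int_{\{|z|\approx\e^{-1}\}}j$, both contained in $\gamma_2(\e)$.

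\emph{Large cells, $|v|>1$.} Now $\delta f(x;z)=f(x+z)-f(x)$. The $I_1$-type term gives $(1+|x|)^{-d-\beta}\varphi(\e)\int_{\{|z|>\e^{-1}\}}a_0$, plus a term $\int(1+|x+z|)^{-d-\beta}\e^{-d}\varphi(\e)a_0(\e^{-1}z)\,dz$. The latter is handled exactly as in \eqref{l1-3-5}, splitting at $|z|=(1+|x|)/2$, and yields $\gamma_{6,\beta}(\e)$. The $I_2$-type term involves $|f(x+z)-f(x+v)|\le c\,\e\sup|\nabla f|$ near $x+z$ together with $\e^{-d}\varphi(\e)j(\e^{-1}v)$. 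Treating it like \eqref{l1-3-5} gives $\e\cdot\e^{-1}\gamma_{3,\beta}(\e)\cdot\e^{-1}$ up to constants, which is bounded by $\gamma_2(\e)+\e^{-1}\gamma_{3,\beta}(\e)$.

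The main obstacle I expect is the $I_1$ part: converting the local Lipschitz bound \eqref{a2-1-1}, which is stated only for $|z_1-z_2|\le1$ and $|z_i|\ge4$, into a cell-wise bound in the rescaled variable, and making sure that the large-$|z|$ tail weighted by $(1+|x+z|)^{-d-\beta}$ produces exactly $\gamma_{6,\beta}(\e)$ and not a worse quantity. The remaining steps are a bookkeeping repetition of Lemmas \ref{l1-2} and \ref{l1-3}.
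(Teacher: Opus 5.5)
Your proposal is correct and follows essentially the paper's proof. It uses the same cell decomposition as in \eqref{l1-2-4} and the crude bound on the cells with $|v|\le 4\sqrt d\,\e$. It then splits into a $j$-difference term, handled by \eqref{a2-1-1} and the argument of \eqref{l1-3-5} (giving $\gamma_5(\e)$ and $\gamma_{6,\beta}(\e)$), and a $\delta f$-difference term, handled by the mean value theorem (giving $\gamma_2(\e)$ and $\e^{-1}\gamma_{3,\beta}(\e)$). The paper applies \eqref{a2-1-1} directly even though $|\e^{-1}(z-v)|$ can be as large as $\sqrt d$, so it leaves your chaining caveat implicit as well.

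Two small bookkeeping corrections. First, the boundary layer near $|z|=1$ contributes $\varphi(\e)\int_{\{\e^{-1}-6\sqrt d\le|z|\le\e^{-1}+6\sqrt d\}}j(z)\,dz$, without your extra factor $\e$. This term is not controlled by $\gamma_2(\e)$ in general, but it is controlled for small $\e$ by the supremum part of $\gamma_{3,\beta}(\e)$ (take $x=0$ there). Second, it is cleaner to split the product as the paper does, $\delta f(x;v)\big(j(\e^{-1}z)-j(\e^{-1}v)\big)+j(\e^{-1}z)\big(\delta f(x;z)-\delta f(x;v)\big)$, so that no bare lattice value $j(\e^{-1}v)$ ever has to be summed. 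With your arrangement you would need \eqref{a2-1-1} once more to replace $j(\e^{-1}v)$ by $j(\e^{-1}z)$.
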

\begin{proof}
For  $\e>0$, set
$$
H_\e(x,z):=\left(f(x+z)-f(x)-\langle\nabla f(x), z\I_{\{|z|\le 1\}}\rangle\right)j \left(\e^{-1} z \right),\quad x,z\in \R^d \hbox{ with }z\neq0,
$$
and $H_\e(x,0)=0$ for every $x\in \R^d$.

Following the argument of \eqref{l1-2-4}, we can get
\begin{align*}
\Lambda_\e^3 f(x,y)
&=\e^{-d}\varphi(\e)\sum_{v\in \e\Z^d}\int_{\Pi_{i=1}^d(v_i,v_i+\e]}
\left(H_\e(x,z)-H_\e(x,v)\right) K\left(y,y+\e^{-1} z \right)\,dz\\
&\quad + \e^{-d}\varphi(\e)\sum_{v\in \e\Z^d}\int_{\Pi_{i=1}^d(v_i,v_i+\e]}\left(H_\e(x,v)-H_\e(x,z)\right)\bar K(y)\,dz.
\end{align*}
By the mean value theorem, it holds that
for every $x\in \R^d$, $v\in \e\Z^d$ with
$|v|\le 4\sqrt{d}\e$ and $z\in \Pi_{i=1}^d(v_i,v_i+\e]$,
\begin{align*}
|H_\e(x,z)-H_\e(x,v)|&\le |H_\e(x,z)|+|H_\e(x,v)|\\
&\le c_1\|f\|_{\M_{\beta},2}(1+|x|)^{-d-\beta}
\left(|z|^2j (\e^{-1} z )+|v|^{2}j (\e^{-1} v)\right).
\end{align*}
On the other hand, for every $x\in \Z^d$, $v\in \e\Z^d$
and $z\in \Pi_{i=1}^d(v_i,v_i+\e]$,
\begin{align*}
 |H_\e(x,z)-H_\e(x,v)|
&\le \left|f(x+v)-f(x)-\langle \nabla f(x),v\I_{\{|v|\le 1\}}\rangle\right||j (\e^{-1} z)-j(\e^{-1} v)|\\
&\quad +j \left(\e^{-1} z \right) \left|f(x+z)-f(x+v)-\langle \nabla f(x), z\I_{\{|z|\le 1\}}-v\I_{\{|v|\le 1\}}\rangle\right|\\
&=:I_{1,\e}(x,z,v)+I_{2,\e}(x,z,v).
\end{align*}
Thus, by the mean value theorem and \eqref{a2-1-1}, we obtain that
for every $x\in \Z^d$,  $v\in \e\Z^d$ with $|v|>4\sqrt{d}\e$
and $z\in \Pi_{i=1}^d(v_i,v_i+\e]$,
\begin{align*}
& I_{1,\e}(x,z,v)\\
&\le c_2a_0\left(\e^{-1} z \right)
\bigg(\|f\|_{\M_{\beta},2}(1+|x|)^{-d-\beta}|v|^2\I_{\{4\sqrt{d}\e<|v|\le 1\}}+
\left(|f(x+z)|+|f(x)|\right)\I_{\{|v|>1\}}\bigg)\\
&\le c_3a_0\left(\e^{-1} z \right)\left(\sum_{i=0}^2\|f\|_{\M_{\beta},i}\right)
\Big((1+|x|)^{-d-\beta}\left(|z|^2\I_{\{2\sqrt{d}\e<|z|\le 1\}}+\I_{\{|z|>1\}}\right)+(1+|x+z|)^{-d-\beta}\I_{\{|z|>1\}}\Big).
\end{align*}
Following the arguments for \eqref{eq:eohbbsd} and \eqref{l1-3-5}, we derive that
for every $x\in \Z^d$,  $v\in \e\Z^d$ with $|v|>4\sqrt{d}\e$
and $z\in \Pi_{i=1}^d(v_i,v_i+\e]$,
\begin{align*}
I_{2,\e}(x,z,v)
&\le c_4\e j \left(\e^{-1} z \right)\left(\sum_{i=0}^2\|f\|_{\M_{\beta},i}\right)\Bigg((1+|x|)^{-d-\beta}|z|\I_{\{2\sqrt{d}\e<|z|\le 1-2\sqrt{d}\e\}}+(1+|x+z|)^{-d-\beta}\I_{\{|z|>1+2\sqrt{d}\e\}}\Bigg)\\
&\quad +c_4  j \left(\e^{-1} z \right)\left(\sum_{i=0}^1\|f\|_{\M_{\beta},i}\right)(1+|x|)^{-d-\beta}\I_{\{1-6\sqrt{d}\e\le |z|\le 1+6\sqrt{d}\e\}},
\end{align*} and
\begin{equation*}
\varphi(\e)\int_{\{|z|>\e^{-1}\}}j(z)(1+|x+\e z|)^{-d-\beta}dz
\le c_5(1+|x|)^{-d-\beta}\e^{-1}\gamma_{3,\beta}(\e)
\end{equation*}
as well as
$$
 \varphi(\e)\int_{\{|z|>\e^{-1}\}}a_0(z)(1+|x+\e z|)^{-d-\beta}dz
\le c_5(1+|x|)^{-d-\beta}\gamma_{6,\beta}(\e).
$$

Hence, putting all the estimates together yields that
\begin{align*}
\left|\Lambda_\e^3 f(x,y)\right|& \le c_6\left(\sum_{i=0}^2\|f\|_{\M_{\beta},i}\right)(1+|x|)^{-d-\beta}
\left(\gamma_2(\e)+\varepsilon^{-1}\gamma_{3,\beta}(\e)+\gamma_{5}(\e)+\gamma_{6,\beta}(\e)\right).
\end{align*}
The  proof is complete.
\end{proof}

\begin{lemma}\label{l3-3} Let $\Lambda_\e^4 f(x,y)$ be defined in \eqref{e3-3a}.
For every $\beta>0$, there exists a positive constant $C_3:=C_3(\beta)$ such that for every $f\in \M_\beta$, $x\in \R^d$ and $y\in \T^d$,
\begin{equation}\label{l3-3-1}
|\Lambda_\e^4 f(x,y)|\le C_3\left(\e^{-1}\gamma_{3,\beta}(\e)+\gamma_7(\e)+\gamma_8(\e)\right)\left(\sum_{i=0}^3\|f\|_{\M_{\beta},i}\right)
(1+|x|)^{-d-\beta}.
\end{equation}
\end{lemma}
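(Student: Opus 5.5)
The plan is to rescale the jump variable and then split it at the natural scale $|z|=\e^{-1}$. Since $\bar K(y)\le K_2$, it suffices to bound the bracket in the definition of $\Lambda_\e^4 f$. I read the term $\frac12\langle \nabla^2\bar u(x),A\rangle$ there as $\frac12\langle\nabla^2 f(x),A\rangle$, which is how it is used in $\Upsilon^2_\e$. I also read $\gamma_7(\e)$ with the normalization $\e^2\varphi(\e)$ of \eqref{a1-1-3}.

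\emph{Step 1: change of variables.} Substituting $z\mapsto \e z$ gives
$$
\e^{-d}\varphi(\e)\int_{\R^d}\delta f(x;z)\,j(\e^{-1}z)\,dz=\varphi(\e)\int_{\R^d}\delta f(x;\e z)\,j(z)\,dz .
$$
Note that $\delta f(x;\e z)=f(x+\e z)-f(x)-\e\langle\nabla f(x),z\rangle$ when $|z|\le\e^{-1}$, and $\delta f(x;\e z)=f(x+\e z)-f(x)$ when $|z|>\e^{-1}$. I split the integral accordingly into $J_1$ (over $|z|\le \e^{-1}$) and $J_2$ (over $|z|>\e^{-1}$).

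\emph{Step 2: the region $|z|\le \e^{-1}$.} Here I Taylor expand to third order:
$$
f(x+\e z)-f(x)-\e\langle\nabla f(x),z\rangle=\tfrac{\e^2}{2}\langle\nabla^2 f(x),z\otimes z\rangle+R_\e(x,z),
\qquad |R_\e(x,z)|\le c\,\e^3|z|^3\sup_{s\in[0,1]}|\nabla^3 f(x+s\e z)|.
$$
Since $|s\e z|\le 1$, we have $(1+|x+s\e z|)^{-d-\beta}\le c(1+|x|)^{-d-\beta}$. Hence the remainder contributes at most
$$
c\,\|f\|_{\M_\beta,3}(1+|x|)^{-d-\beta}\,\varphi(\e)\e^3\int_{\{|z|\le\e^{-1}\}}|z|^3j(z)\,dz=c\,\|f\|_{\M_\beta,3}\,\gamma_8(\e)(1+|x|)^{-d-\beta}.
$$
The quadratic term equals $\frac12\bigl\langle\nabla^2 f(x),\e^2\varphi(\e)\int_{\{|z|\le \e^{-1}\}}z\otimes z\,j(z)\,dz\bigr\rangle$. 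Subtracting $\frac12\langle\nabla^2 f(x),A\rangle$ leaves a difference bounded by $c\|f\|_{\M_\beta,2}\gamma_7(\e)(1+|x|)^{-d-\beta}$.

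\emph{Step 3: the region $|z|>\e^{-1}$.} Bound $|f(x+\e z)-f(x)|\le \|f\|_{\M_\beta,0}\bigl((1+|x|)^{-d-\beta}+(1+|x+\e z|)^{-d-\beta}\bigr)$.
\begin{itemize}
\item The first part gives $\varphi(\e)\int_{\{|z|>\e^{-1}\}}j(z)\,dz\le \e^{-1}\gamma_{3,\beta}(\e)$, directly from the definition of $\gamma_{3,\beta}$.
\item The second part is exactly the quantity estimated in \eqref{l1-3-5}, so it is at most $c(1+|x|)^{-d-\beta}\e^{-1}\gamma_{3,\beta}(\e)$.
\end{itemize}
Summing Steps 2 and 3 and multiplying by $K_2$ yields \eqref{l3-3-1}.

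\emph{Main point of care.} The only delicate step is keeping the spatial decay $(1+|x|)^{-d-\beta}$ uniform. In the Taylor remainder this holds because the displacement $\e z$ is bounded by $1$. For the large jumps it is handled by the splitting argument already carried out in \eqref{l1-3-5}, which is why $\e^{-1}\gamma_{3,\beta}$ appears in the bound. No regularity of $K$ is needed here, since $\bar K(y)$ only multiplies the bracket.
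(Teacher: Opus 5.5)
Your proposal is correct and follows the paper's proof essentially step for step. Both arguments rescale $z\mapsto \e z$, Taylor-expand to third order on $\{|z|\le \e^{-1}\}$ to produce the $\gamma_7$ and $\gamma_8$ terms, and control $\{|z|>\e^{-1}\}$ through the definition of $\gamma_{3,\beta}$ together with \eqref{l1-3-5}; your two readings ($\nabla^2 f$ in place of $\nabla^2\bar u$, and the factor $\e^2\varphi(\e)$ in $\gamma_7$) are exactly what the paper's proof actually uses.
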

\begin{proof}
By the
change of variables,
\begin{align*}
\varphi(\e)\e^{-d}\int_{\R^d}\delta f(x;z)j \left(\e^{-1} z \right)dz=
\varphi(\e)\int_{\R^d}\delta f(x;\e z)j \left(z\right)dz.
\end{align*}
Applying the Taylor expansion, we find that for every $x,z\in \R^d$,
\begin{align*}
\delta f(x;\e z)=
\begin{cases}
\frac{1}{2}\e^2\left\langle \nabla^2 f(x), z\otimes z\right\rangle
+J_\e(f)(x;z),\ &  |z|\le \e^{-1},\\
f(x+\e z)-f(x),\ &   |z|> \e^{-1},
\end{cases}
\end{align*}
where the remaining term $J_\e(f)(x;z)$ satisfies that
for every $x,z\in \R^d$ with $|z|\le \e^{-1}$,
\begin{equation}\label{l3-3-2}
|J_\e(f)(x;z)|\le \frac{\e^3}{6}\left(\sup_{x'\in \R^d:|x'-x|\le 1}|\nabla^3 f(x')|\right)|z|^3
\le c_1\e^3\|f\|_{\M_{\beta},3}(1+|x|)^{-d-\beta}|z|^3.
\end{equation}
Combining the above estimates yields that
\begin{align*}
\varphi(\e)\e^{-d}\int_{\R^d}\delta f(x;z)j \left(\e^{-1} z \right)\,dz
&=\frac{1}{2}\varphi(\e)\e^2\left\langle \nabla^2 f(x),
\int_{\{|z|\le \e^{-1}\}}(z\otimes z)j(z)\,dz\right\rangle
+\varphi(\e)\int_{\{|z|\le \e^{-1}\}}J_\e(f)(x;z)j(z)\,dz\\
&\quad +\varphi(\e)\int_{\{|z|>\e^{-1}\}}\left(f(x+\e z)-f(x)\right)j(z)\,dz\\
&=:\frac{1}{2}\varphi(\e)\e^2\left\langle \nabla^2 f(x),
\int_{\{|z|\le \e^{-1}\}}(z\otimes z)j(z)\,dz\right\rangle+I_{1,\e}(f)(x)+I_{2,\e}(f)(x).
\end{align*}

Furthermore, according to \eqref{l3-3-2} and \eqref{l1-3-5}, it is easy to see that
$$
|I_{1,\e}(f)(x)| \le c_2\gamma_8(\e)\|f\|_{\M_{\beta},3}(1+|x|)^{-d-\beta}
$$ and
$$
|I_{2,\e}(f)(x)|\le c_2\e^{-1}\gamma_{3,\beta}(\e)\|f\|_{\M_{\beta},0}(1+|x|)^{-d-\beta}.
$$
Therefore,
\begin{align*}
|\Lambda_\e^4 f(x,y)|
&=\left|\bar K(y) \left(\varphi(\e)\e^{-d}\int_{\R^d}\delta f(x;z)j \left(\e^{-1} z \right)dz
-\frac{1}{2}\left\langle \nabla^2 f(x), A\right\rangle\right)\right|\\
&\le \left|\frac{1}{2}\left\langle \nabla^2 f(x), \varphi(\e)\e^2\int_{\{|z|\le \e^{-1}\}}(z\otimes z)j(z)\,dz-A \right\rangle\right|
+\left(\sup_{y\in \T^d}\bar K(y)\right)\left(|I_{1,\e}(f)(x)|+|I_{2,\e}(f)(x)|\right)\\
&\le c_3\left(\e^{-1}\gamma_{3,\beta}(\e)+\gamma_7(\e)+\gamma_8(\e)\right)
\left(\sum_{i=0}^3\|f\|_{\M_{\beta},i}\right)(1+|x|)^{-d-\beta}.
\end{align*}
We then complete the proof.
\end{proof}

\begin{lemma}\label{l3-4}
Suppose that \eqref{t1-2-1} holds for some $\beta>0$. Let $\Upsilon_\e^2 f(x,y)$ be defined in \eqref{e3-3a}. Then there exists a positive constant $C_4:=C_4(\beta)$ such that for every $f\in \M_\beta$, $x\in \R^d$ and $y\in \T^d$,
\begin{equation}\label{l3-4-1}
|\Upsilon_\e^2 f(x,y)|\le C_4\left(\sum_{i=0}^3\|f\|_{\M_{\beta},i}\right)(1+|x|)^{-d-\beta}.
\end{equation}
\end{lemma}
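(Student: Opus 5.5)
The plan is to mirror the proof of Lemma \ref{l1-6} term by term, replacing the subcritical estimates with their critical-regime analogues. First, by \eqref{e:PHI} and \eqref{e2-2a}, we have $\|\Phi_\e\|_\infty\le c\gamma_4(\e)$ and $\|\phi_0^\e\|_\infty+\|\nabla\phi_0^\e\|_\infty\le c\gamma_4(\e)$. These bounds depend only on the definition of $\Phi_\e$ and on Assumption \ref{a1-2}, so they carry over unchanged. We then bound each piece of $\Upsilon_\e^2 f$ in \eqref{e3-3a} by a constant times $\big(\sum_{i=0}^3\|f\|_{\M_\beta,i}\big)(1+|x|)^{-d-\beta}$ multiplied by a power of $\gamma$'s.

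The individual bounds come from earlier results. Lemma \ref{l3-2} gives $|\Lambda_\e^3 f|\le c(\gamma_2+\e^{-1}\gamma_{3,\beta}+\gamma_5+\gamma_{6,\beta})$. Lemma \ref{l3-3} gives $|\Lambda_\e^4 f|\le c(\e^{-1}\gamma_{3,\beta}+\gamma_7+\gamma_8)$. The term $\Theta_\e^2 f$ is bounded directly by $c\|f\|_{\M_\beta,2}(1+|x|)^{-d-\beta}$, since $\bar K(y)A-\bar A$ is a bounded matrix. For the gradient terms we apply the same lemmas to $\partial_{x_i}f\in\M_\beta$, which costs one more derivative. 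Together with $\bar\sL_2(\nabla f)=\frac12\langle\bar A,\nabla^2\nabla f\rangle$, this gives a bound of order $(1+\gamma\hbox{'s})$ times $\sum_{i\le 4}\|f\|_{\M_\beta,i}$, and multiplying by $\e|\phi_0^\e|\le c\e\gamma_4(\e)$ produces the factor $\e\gamma_4(\e)(1+\cdots)$. The $\Gamma_\e$ term is handled by Lemma \ref{l1-3}, whose proof never used Assumption \ref{a1-1}(i), together with \eqref{l1-6-3}. This gives $\e|\Gamma_\e(\partial_{x_i}f,\phi_{0,i}^\e)|\le c(\gamma_2+\gamma_{3,\beta})\gamma_4$, and since $\e<1$ this is at most $c(\gamma_2+\e^{-1}\gamma_{3,\beta})\gamma_4$. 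The remaining terms are bounded by $\e^2\varphi(\e)\gamma_4(\e)^2\|f\|_{\M_\beta,2}$ and $\lambda\e\gamma_4(\e)\|f\|_{\M_\beta,1}$.

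Summing these bounds and comparing with \eqref{e3-6}, we find
\[
|\Upsilon_\e^2 f(x,y)|\le c\Big(\sum_i\|f\|_{\M_\beta,i}\Big)\big(1+\xi_{3,\beta}(\e)\big)(1+|x|)^{-d-\beta},
\]
and \eqref{t1-2-1} makes $\xi_{3,\beta}$ bounded for small $\e$. For $\e$ bounded away from $0$ we need the $\gamma$'s to be finite, which is implicit in the assumptions; alternatively, we restrict to $\e\in(0,\e_0)$ as in Lemma \ref{l1-6}.

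The main obstacle is derivative bookkeeping. Applying Lemma \ref{l3-3} to $\nabla f$ requires $\|f\|_{\M_\beta,4}$, while the statement only lists norms up to order $3$. I would first check whether the $\e\langle\Lambda^4_\e(\nabla f),\phi_0^\e\rangle$ term can be controlled without the Taylor remainder. Writing $\Lambda_\e^4(\nabla f)$ as $\bar K(y)\big(\varphi\e^{-d}\int\delta(\nabla f)j-\frac12\langle\cdot,A\rangle\big)$, the second-order Taylor bound with $|\delta\nabla f(x;\e z)|\le c\e^2|z|^2\|f\|_{\M_\beta,3}$ for $|z|\le\e^{-1}$ gives a bound $\varphi\e^2\int_{|z|\le\e^{-1}}|z|^2j+\e^{-1}\gamma_{3,\beta}+|A|$, which is bounded by Assumption \ref{a1-1}(ii). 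This uses only three derivatives, and the $\bar\sL_2(\nabla f)$ term likewise uses three. If this fails, I would accept $\sum_{i\le 4}$, which is harmless since $f\in\M_\beta$ has all derivatives controlled.
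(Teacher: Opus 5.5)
Your proposal is correct and follows the paper's route. You bound each piece of $\Upsilon_\e^2 f$ by Lemma~\ref{l1-3} (whose proof is regime-independent), Lemmas~\ref{l3-2}--\ref{l3-3}, the direct bound on $\Theta_\e^2$ and \eqref{l1-6-3}, and absorb everything into $1+\xi_{3,\beta}(\e)$, which is bounded by \eqref{t1-2-1}; the leftover factors of $\gamma_4(\e)$, such as in $(\gamma_2+\e^{-1}\gamma_{3,\beta})\gamma_4$, are harmless because \eqref{a1-1-2} forces $\int_{\{|z|>1\}}|z|j(z)\,dz<\infty$, so $\gamma_4$ is bounded in this regime.

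Your second-order Taylor treatment of $\e\langle\Lambda_\e^4(\nabla f),\phi_0^\e\rangle$ is actually what justifies the order-$3$ sum in \eqref{l3-4-1}. The paper's proof just feeds \eqref{l3-3-1} applied to $\nabla f$ into the total, which costs $\|f\|_{\M_\beta,4}$. Your bound $c\,\e\gamma_4(\e)\big(\e^2\varphi(\e)\int_{\{|z|\le\e^{-1}\}}|z|^2j(z)\,dz+\e^{-1}\gamma_{3,\beta}(\e)+|A|\big)$ uses only three derivatives and is still $O(\xi_{3,\beta}(\e))$.
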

\begin{proof}
By the proof of Lemma \ref{l1-3}, we know that  \eqref{l1-3-1} still holds true for the critical regime. That is,
for every $\beta>0$, there exists a positive constant $c_1:=c_1(\beta)$ such that for
every $f\in \M_\beta$, $\phi\in C^1(\T^d)$, $x\in \R^d$ and $y\in \T^d$,
$$
\left|\Gamma_\e(f,\phi)(x,y)\right| \le c_1\e^{-1}\left(\gamma_2(\e)+\gamma_{3,\beta}(\e)\right)
\left(\sum_{i=0}^1
\|f\|_{\M_\beta,i}
\right)(1+|x|)^{-d-\beta}
\left(\|\phi\|_\infty+\|\nabla \phi\|_\infty\right).
$$
On the other hand, it is easily seen from the definition of $\Theta_\e^2 f(x,y)$ that
for every $f\in \M_\beta$, $x\in \R^d$ and $y\in \T^d$,
$$
\left|\Theta_\e^2 f(x,y)\right|\le c_2
\|f\|_{\M_\beta,2}(1+|x|)^{-d-\beta}.
$$
Then, putting both estimates above, \eqref{l1-6-3}, \eqref{l3-2-1} and \eqref{l3-3-1} together yields that                                                                                                                             \begin{align*}
|\Upsilon_\e^2 f(x,y)|&\le c_3\left[|\Theta_\e^2 f(x,y)|+\left(\sum_{i=0}^{3}\|f\|_{\M_{\beta},i}\right)
\xi_{3,\beta}(\e)(1+|x|)^{-d-\beta}\right]\\
&\le c_4\left(\sum_{i=0}^{3}\|f\|_{\M_{\beta},i}\right)\left(1+\xi_{3,\beta}(\e)\right)(1+|x|)^{-d-\beta}\\
&\le c_5\left(\sum_{i=0}^{3}\|f\|_{\M_{\beta},i}\right)(1+|x|)^{-d-\beta},
\end{align*}
where the last inequality follows from  \eqref{t1-2-1}. The proof is complete.
\end{proof}

\subsection{Supercritical diffusive regime}
In this part we suppose that Assumption \ref{a1-1}(iii) and Assumption \ref{a1-2} hold. Recall
that in the supercritical diffusive regime, we write $\phi_0^\e$ as $\phi_0$ (which is defined by \eqref{e2-2})
since it is independent of $\e$.

 We begin with the following lemma, which indicates that the condition \eqref{a1-1-2a} ensures the non-degeneracy of
the limit diffusion matrix $\bar A_0$ defined by \eqref{e2-3-2}.

\begin{lemma}\label{l5-2}
Suppose that Assumption {\rm\ref{a1-1}(iii)} and Assumption {\rm\ref{a1-2}} hold. Then the limit diffusion matrix $\bar A_0$ defined by \eqref{e2-3-2} is
strictly positive definite.
\end{lemma}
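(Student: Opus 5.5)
For fixed $\xi\in\R^d\setminus\{0\}$, set $g_\xi(y):=\langle \xi,\phi_0(y)\rangle$, which is a $1$-periodic $C^1$ function. Then by \eqref{e2-3-2}
$$
\langle \bar A_0\xi,\xi\rangle=\int_{\T^d}\int_{\R^d}\bigl(\langle \xi,z\rangle+g_\xi(y+z)-g_\xi(y)\bigr)^2K(y,y+z)j(z)\,dz\,dy .
$$
This is nonnegative, and finite because $\phi_0$ is bounded and Lipschitz while \eqref{a1-1-4} holds. So $\bar A_0$ is positive semidefinite, and it suffices to exclude $\langle \bar A_0\xi,\xi\rangle=0$ for $\xi\ne0$.

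Suppose $\langle \bar A_0\xi,\xi\rangle=0$. Since $K\ge K_1>0$, we get $\langle\xi,z\rangle+g_\xi(y+z)-g_\xi(y)=0$ for a.e.\ $(y,z)$ with $j(z)>0$. Both sides are continuous in $(y,z)$. I would upgrade this to the statement that for every $z\in{\rm supp}[j]$ and every $y\in\R^d$,
$$
g_\xi(y+z)-g_\xi(y)=-\langle\xi,z\rangle.
$$
The upgrade goes as follows. For a.e.\ $z$ in $\{j>0\}$ the identity holds for a.e.\ $y$, hence for all $y$ by continuity in $y$. The set of such $z$ is dense in the essential support, which we identify with ${\rm supp}[j]$; continuity in $z$ then extends the identity to the whole support. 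This density point is the main technical nuisance. If needed, I would interpret ${\rm supp}[j]$ as the support of the measure $j(z)\,dz$.

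Now apply the identity along the sequence from \eqref{a1-1-2a}. Take $z_k^i\to z_\infty^i$, and note that $z_\infty^i$ also lies in the (closed) support. Subtracting the identities at $z_k^i$ and at $z_\infty^i$, with $y$ replaced by $y':=y+z_\infty^i$ in a suitable way, gives
$$
g_\xi(y+z_k^i)-g_\xi(y+z_\infty^i)=-\langle \xi,z_k^i-z_\infty^i\rangle .
$$
Divide by $|z_k^i-z_\infty^i|$ and let $k\to\infty$. Since $g_\xi\in C^1$, we obtain $\langle\nabla g_\xi(y+z_\infty^i),\pm e_i\rangle=-\langle\xi,\pm e_i\rangle$ for all $y$, that is,
$$
\langle \nabla g_\xi(w),e_i\rangle=-\langle\xi,e_i\rangle\quad\text{for all } w\in\R^d \text{ and } 1\le i\le d.
$$
Because $\{e_i\}$ is a basis, $\nabla g_\xi\equiv-\xi$, so $g_\xi(w)=-\langle\xi,w\rangle+c$. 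This function is periodic and bounded only if $\xi=0$, which is a contradiction.

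Hence $\langle\bar A_0\xi,\xi\rangle>0$ for every $\xi\neq0$. By compactness of the unit sphere and continuity of the quadratic form, $\bar A_0$ is strictly positive definite. The only delicate step is the passage from an a.e.\ identity to a pointwise identity on ${\rm supp}[j]$ including the limit point $z_\infty^i$. I expect continuity of $\phi_0$ (Assumption \ref{a1-2}) together with closedness of the support to handle it.
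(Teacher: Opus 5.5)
Your proposal is correct and follows essentially the same route as the paper: show the integrand vanishes on ${\rm supp}[j]$, subtract the identities at $z_k^i$ and $z_\infty^i$, and differentiate along $\pm e_i$ to contradict the periodicity of $\langle\xi,\phi_0\rangle$. The differences are cosmetic. The paper uses a single $i$ with $\langle e_i,\xi\rangle\neq 0$ rather than all $d$ directions. It also covers the a.e.-to-pointwise upgrade with a one-line appeal to continuity, a step you handle correctly by reading ${\rm supp}[j]$ as the support of the measure $j(z)\,dz$.
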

\begin{proof}
The proof is similar
to
that of \cite[Remark 4.2]{CCKW}.
 For the  convenience of the reader, we
 give a detailed proof here.
Suppose that the desired conclusion does not hold. Then there
is some $\xi \in \R^d \setminus \{ 0\}$ so
 that $ \langle \bar A_0 \xi, \xi\rangle=0. $
This along with the definition \eqref{e2-3-2} yields that
\begin{equation*}
\int_{\T^d}\int_{\R^d}\left|\langle z+\phi_0(y+z)-\phi_0(y), \xi\rangle \right|^2 K(y,y+z)j(z)\,dz\,dy=0.
\end{equation*}
By the continuity of  $(y,z)\mapsto z+\phi_0(y+z)-\phi_0(y)$
 and the strict positivity of the function $K$, we have
\begin{equation}\label{l5-2-1}
\langle z+\phi_0(y+z)-\phi_0(y), \xi\rangle=0,\quad \ y\in \T^d,\ z\in {\rm supp}[ j].
\end{equation}
  Since $\xi \not=0$, there is some $1\leq i \leq d$ so that $  \langle e_i, \xi  \rangle \not=0$.
 Let $\{z_k^i\}_{k\ge 1}$
be the sequence in \eqref{a1-1-2a}.
 Without loss of generality, we assume
\begin{equation}\label{e:3.25}
 \lim_{k\to\infty} \frac{z_k^i- z_\infty^i} { |z_k^i- z_\infty^i|} =e_i.
\end{equation}
 According to \eqref{l5-2-1},
 for all $k\ge1$ and $y\in \T^d$,
$$
\langle \phi_0(y+z_k^i)-\phi_0(y),\xi\rangle=-\langle z_k^i,\xi\rangle. 
$$
  Letting $k\to \infty$ yields that for any $y\in \T^d$,
$$
\langle \phi_0(y+z_\infty^i)-\phi_0(y),\xi\rangle=-\langle z_\infty^i,\xi\rangle.  
$$
Thus
\begin{equation}\label{e:3.28}
\langle \phi_0(y+z_k^i)- \phi_0(y+z_\infty^i),\xi\rangle=-\langle  z_k^i- z_\infty^i,\xi\rangle
\quad \hbox{for every } y\in \T^d
\hbox{ and } k\geq 1.
\end{equation}
 Since $\phi_0\in C^1(\T^d)$, dividing \eqref{e:3.28} by
  $| z_k^i- z_\infty^i|$
 on both sides   and letting $k\to \infty$,    we have by \eqref{e:3.25} that
\begin{equation*}
 \frac{\partial}{\partial e_i}
 \langle \phi_0(y+ z^i_\infty),\xi\rangle=-\langle e_i, \xi \rangle  \not= 0 \quad \hbox{for every } y\in \T^d.
\end{equation*}
Here  $\frac{\partial}{\partial e_i} $
stands for the directional derivative in the direction of $e_i$.
The above  contradicts with the fact that the function $y\mapsto \langle \phi_0(y + z^i_\infty  ),\xi\rangle$ is multivariate periodic
in $y$. So the desired conclusion holds.
\end{proof}

\begin{lemma}\label{l4-1} Let $\Lambda_\e^5 f(x,y)$ be defined in \eqref{e2-3-4}.
For every $\beta>0$, there exists  $C_1:=C_1(\beta)>0$ such that for every $f\in \M_{\beta}$, $x\in \R^d$ and $y\in \T^d$,
\begin{equation}\label{l4-1-1}
|\Lambda_\e^5 f(x,y)|\le C_1\left(\e^{-1}\gamma_{3,\beta}(\e)+\gamma_9(\e)\right)\left(\sum_{i=0}^3 \|f\|_{\M_{\beta},i}\right)
(1+|x|)^{-d-\beta}.
\end{equation}
\end{lemma}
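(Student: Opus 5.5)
The plan follows the proof of Lemma \ref{l3-3}: Taylor-expand inside the region $|z|\le \e^{-1}$ and treat the region $|z|>\e^{-1}$ as a tail. Since $\varphi(\e)=\e^{-2}$ and $\hat\delta f$ has no cutoff indicator, first split
\begin{align*}
\e^{-2}\int_{\R^d}\hat\delta f(x;\e z)K(y,y+z)j(z)\,dz
&=\e^{-2}\int_{\{|z|\le \e^{-1}\}}\hat\delta f(x;\e z)K(y,y+z)j(z)\,dz\\
&\quad+\e^{-2}\int_{\{|z|>\e^{-1}\}}\hat\delta f(x;\e z)K(y,y+z)j(z)\,dz.
\end{align*}
On $\{|z|\le\e^{-1}\}$, expand to second order with a third-order remainder: $\hat\delta f(x;\e z)=\frac12\e^2\langle\nabla^2 f(x),z\otimes z\rangle+J_\e(f)(x;z)$. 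The remainder satisfies $|J_\e(f)(x;z)|\le c\,\e^3\|f\|_{\M_\beta,3}(1+|x|)^{-d-\beta}|z|^3$, exactly as in \eqref{l3-3-2}, because $|\e z|\le 1$ implies $(1+|x'|)\asymp(1+|x|)$ for $|x'-x|\le1$. Integrating against $K_2 j(z)$ then bounds the remainder contribution by $c\,\e\int_{\{|z|\le\e^{-1}\}}|z|^3j(z)\,dz\,\|f\|_{\M_\beta,3}(1+|x|)^{-d-\beta}$. This is the first part of $\gamma_9(\e)$.

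The second-order term over $\{|z|\le\e^{-1}\}$ differs from the subtracted quantity $\frac12\langle\nabla^2f(x),\int_{\R^d}(z\otimes z)K(y,y+z)j(z)\,dz\rangle$ by the missing tail $\frac12\langle\nabla^2 f(x),\int_{\{|z|>\e^{-1}\}}(z\otimes z)K j\,dz\rangle$. That tail is bounded by $c\|f\|_{\M_\beta,2}(1+|x|)^{-d-\beta}\int_{\{|z|>\e^{-1}\}}|z|^2j(z)\,dz$, which is the second part of $\gamma_9(\e)$.

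It remains to control the tail part of $\e^{-2}\int_{\{|z|>\e^{-1}\}}\hat\delta f(x;\e z)Kj\,dz$. Write $\hat\delta f(x;\e z)=(f(x+\e z)-f(x))-\e\langle\nabla f(x),z\rangle$ and estimate three pieces.
\begin{itemize}
\item[(a)] The $f(x)$ piece is bounded by $\e^{-2}\int_{\{|z|>\e^{-1}\}}j\,dz\,\|f\|_{\M_\beta,0}(1+|x|)^{-d-\beta}$. Since $\e^{-2}\le \e^{-1}\cdot\e^{-1}|z|$ on this region, and also $\le|z|^2$ there, this is at most $\int_{\{|z|>\e^{-1}\}}|z|^2 j\,dz$.
\item[(b)] The gradient piece is bounded by $\e^{-1}\int_{\{|z|>\e^{-1}\}}|z|j\,dz\,\|f\|_{\M_\beta,1}(1+|x|)^{-d-\beta}\le \int_{\{|z|>\e^{-1}\}}|z|^2j\,dz\,\|f\|_{\M_\beta,1}(1+|x|)^{-d-\beta}$, using $\e^{-1}\le|z|$.
\item[(c)] The $f(x+\e z)$ piece is $\varphi(\e)\int_{\{|z|>\e^{-1}\}}j(z)|f(x+\e z)|\,dz$, which is exactly what \eqref{l1-3-5} bounds by $c_4\e^{-1}\gamma_{3,\beta}(\e)\|f\|_{\M_\beta,0}(1+|x|)^{-d-\beta}$.
\end{itemize}
The main obstacle is piece (c), since its decay must be converted back to $(1+|x|)^{-d-\beta}$. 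This is handled by the split at $|z|\le(1+|x|)/2$ already performed in \eqref{l1-3-5}, with $\varphi(\e)=\e^{-2}$.

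Summing the contributions gives \eqref{l4-1-1} with constant depending only on $d$, $\beta$ and $K_2$.
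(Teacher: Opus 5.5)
Your proposal is correct and follows the paper's proof. Like the paper, you split at $|z|=\e^{-1}$, Taylor-expand on the inner region to get the two pieces of $\gamma_9(\e)$, and control the tail $f(x+\e z)$ term via \eqref{l1-3-5} to get $\e^{-1}\gamma_{3,\beta}(\e)$. Your step (b) bounds the linear tail term $\e^{-1}\langle\nabla f(x),\int_{\{|z|>\e^{-1}\}}zK(y,y+z)j(z)\,dz\rangle$ by $\int_{\{|z|>\e^{-1}\}}|z|^2j(z)\,dz$. The paper's displayed estimate for $I_2^\e$, which involves only $\|f\|_{\M_\beta,0}$, silently omits this term, so your version is slightly more complete.
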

\begin{proof}
For any $f\in \M_{\beta}$, $x\in \R^d$ and $y\in \T^d$,
\begin{align*}
\Lambda_\e^5 f(x,y)&=\e^{-2}\int_{\{|z|\le \e^{-1}\}}\hat \delta f(x;\e z)K(y,y+z)j(z)\,dz-
\frac{1}{2}\left\langle \nabla^2 f(x), \int_{\R^d} (z\otimes z) K(y,y+z)j(z)\,dz\right\rangle\\
&\quad +\e^{-2}\int_{\{|z|>\e^{-1}\}}\hat \delta f(x;\e z)K(y,y+z)j(z)\,dz\\
&=:I_1^\e(x,y)+I_2^\e(x,y).
\end{align*}

By Taylor's expansion, we get
\begin{align*}
 |I_1^\e(x,y)|
&\le c_1\e\left(\sup_{x'\in \R^d:|x-x'|\le 1}|\nabla^3 f(x')|\right)
\left(\int_{\{|z|\le \e^{-1}\}}|z|^3j(z)\,dz\right)
+c_1 |\nabla^2 f(x)| \left(\int_{\{|z|>\e^{-1}\}}|z|^2 j(z)\,dz\right)\\
&\le c_2\left(\sum_{i=2}^3\|f\|_{\M_{\beta},i}\right)\gamma_9(\e)
(1+|x|)^{-d-\beta}.
\end{align*}
On the other hand, one can see that \eqref{l1-3-5} holds with $\varphi(\e)=\e^{-2}$, and so
\begin{align*}
|I_2^\e(x,y)|&\le c_3\e^{-2}\|f\|_{\M_{\beta},0}
\left((1+|x|)^{-d-\beta} \int_{\{|z|>\e^{-1}\}}j(z)\,dz+\int_{\{|z|>\e^{-1}\}}j(z)(1+|x+\e z|)^{-d-\beta}\,dz\right)\\
&\le c_4\e^{-1}\gamma_{3,\beta}(\e)\|f\|_{\M_{\beta},0}(1+|x|)^{-d-\beta},
\end{align*}
where $\gamma_{3,\beta}(\e)$ is defined by \eqref{e1-7} with $\varphi(\e)=\e^{-2}$.

Hence, the conclusion \eqref{l4-1-1} follows
by combining the above estimates.
\end{proof}

\begin{lemma}\label{l4-2}
Let $\Lambda_\e^6 f(x,y)$ be defined in \eqref{e2-3-4}.
For every $\beta>0$, there exists $C_2:=C_2(\beta)>0$ such that for every $f\in \M_{\beta}$, $x\in \R^d$ and $y\in \T^d$,
\begin{equation}\label{l4-2-1}
|\Lambda_\e^6 f(x,y)|\le C_2\left(\e+\gamma_{3,\beta}(\e)+\gamma_9(\e)\right)\left(\sum_{i=1}^3\|f\|_{\M_{\beta},i}\right)\|\phi_0\|_\infty
(1+|x|)^{-d-\beta}.
\end{equation}
\end{lemma}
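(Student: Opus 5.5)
Write $\Lambda_\e^6 f$ explicitly after the change of variables $z\mapsto \e z$ in \eqref{e1-6}. With $\varphi(\e)=\e^{-2}$ this gives
$$
\e^{-1}\sum_{i=1}^d\Gamma_\e(\partial_{x_i}f,\phi_{0,i})(x,y)=\e^{-1}\int_{\R^d}\left\langle \nabla f(x+\e z)-\nabla f(x), \phi_0(y+z)-\phi_0(y)\right\rangle K(y,y+z)j(z)\,dz .
$$
We then subtract the target term $\langle \nabla^2 f(x), \int z\otimes(\phi_0(y+z)-\phi_0(y))K(y,y+z)j(z)\,dz\rangle$. The subtracted integral is absolutely convergent, since $|\phi_0(y+z)-\phi_0(y)|\le 2\|\phi_0\|_\infty$ and \eqref{a1-1-4} holds; near the origin it is handled by $\|\nabla\phi_0\|_\infty$, which is likewise bounded via \eqref{e2-2a} by $\|\Phi\|_\infty$. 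I will split the $z$-integral into $\{|z|\le \e^{-1}\}$ and $\{|z|>\e^{-1}\}$.

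\emph{Small jumps $|z|\le\e^{-1}$.} A second order Taylor expansion gives
$$
\nabla f(x+\e z)-\nabla f(x)=\e\nabla^2 f(x)z+R_\e(x,z),\qquad |R_\e(x,z)|\le c\,\e^2|z|^2\sup_{|x'-x|\le 1}|\nabla^3 f(x')|\le c\,\e^2|z|^2\|f\|_{\M_\beta,3}(1+|x|)^{-d-\beta}.
$$
The main term cancels exactly with the subtracted term restricted to $\{|z|\le\e^{-1}\}$. For the remainder, bound $|\phi_0(y+z)-\phi_0(y)|\le 2\|\phi_0\|_\infty$. This gives a contribution $c\,\e\int_{\{|z|\le\e^{-1}\}}|z|^2 j(z)\,dz\,\|f\|_{\M_\beta,3}\|\phi_0\|_\infty(1+|x|)^{-d-\beta}$. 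Split this integral at $|z|=1$: the part with $|z|\le 1$ is $O(\e)$, and on $1<|z|\le\e^{-1}$ we have $|z|^2\le |z|^3$, so that part is bounded by $\gamma_9(\e)$. Hence the contribution is $\le c(\e+\gamma_9(\e))$.

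\emph{Large jumps $|z|>\e^{-1}$.} The subtracted term restricted there is bounded by $c\|f\|_{\M_\beta,2}\|\phi_0\|_\infty(1+|x|)^{-d-\beta}\int_{\{|z|>\e^{-1}\}}|z|j(z)\,dz$. Since $|z|\le \e|z|^2$ on this set, this is at most $\e\int_{\{|z|>\e^{-1}\}}|z|^2j(z)\,dz\le \gamma_9(\e)$. For the $\Gamma$ part, bound
$$
|\nabla f(x+\e z)-\nabla f(x)|\le \|f\|_{\M_\beta,1}\left((1+|x|)^{-d-\beta}+(1+|x+\e z|)^{-d-\beta}\right).
$$
This yields
$$
\e^{-1}\|\phi_0\|_\infty\|f\|_{\M_\beta,1}\left[(1+|x|)^{-d-\beta}\int_{\{|z|>\e^{-1}\}}j(z)\,dz+\int_{\{|z|>\e^{-1}\}}j(z)(1+|x+\e z|)^{-d-\beta}dz\right].
$$
By \eqref{l1-3-5} with $\varphi(\e)=\e^{-2}$, the second integral is at most $c\,\e^{2}\cdot\e^{-1}\gamma_{3,\beta}(\e)(1+|x|)^{-d-\beta}$. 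For the first, the definition of $\gamma_{3,\beta}$ gives $\int_{\{|z|>\e^{-1}\}}j\le \e^{2}\e^{-1}\gamma_{3,\beta}(\e)$. Multiplying by the prefactor $\e^{-1}$, both terms are $\le c\,\gamma_{3,\beta}(\e)$.

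Summing the pieces gives \eqref{l4-2-1}. The main point needing care is the bookkeeping of powers of $\e$: the prefactor $\e^{-1}$ must be absorbed by the extra $\e$ from the Taylor expansion for small jumps, and by the $\e^2$ hidden in $\gamma_{3,\beta}$ (through $\varphi(\e)=\e^{-2}$) for large jumps. One must also check that only $\|\phi_0\|_\infty$, and no gradient of $\phi_0$, is needed. This holds because in the small-jump remainder the factor $|z|^2$, integrated near the origin, is finite by \eqref{a1-1-4} without invoking $|\phi_0(y+z)-\phi_0(y)|\le\|\nabla\phi_0\|_\infty|z|$.
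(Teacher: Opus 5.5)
Your proposal is correct and follows the paper's proof essentially step by step. It uses the same split at $|z|=\e^{-1}$ and the same Taylor expansion with exact cancellation of the $\nabla^2 f$ term. It needs only $\|\phi_0\|_\infty$ in the remainders and handles large jumps with \eqref{l1-3-5} plus the definition of $\gamma_{3,\beta}$. The one difference is that the paper bounds $\e\int_{\{|z|\le\e^{-1}\}}|z|^2j(z)\,dz\le c\e$ directly by \eqref{a1-1-4} instead of splitting at $|z|=1$.

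One small notational point, which the paper shares: by \eqref{e1-6} with $\varphi(\e)=\e^{-2}$, the quantity you and the paper both actually estimate, $\e^{-1}\int\langle\nabla f(x+\e z)-\nabla f(x),\phi_0(y+z)-\phi_0(y)\rangle K(y,y+z)j(z)\,dz$, equals $\e\sum_i\Gamma_\e(\partial_{x_i}f,\phi_{0,i})(x,y)$, not $\e^{-1}\sum_i\Gamma_\e(\partial_{x_i}f,\phi_{0,i})(x,y)$. So the prefactor in \eqref{e2-3-4} should be read as $\e$, which is consistent with \eqref{t1-3-7}.
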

\begin{proof}
For any $x\in \R^d$ and $y\in \T^d$, we write
 $\Lambda_\e^6 f(x,y)=I_1^\e(x,y)+I_2^\e(x,y),$ where
\begin{align*}
I_1^\e(x,y)&:=\e^{-1}\sum_{i=1}^d\int_{\{|z|\le \e^{-1}\}}\left(\partial_{x_i}f(x+\e z)-\partial_{x_i}f(x)\right)
\left(\phi_{0,i}(y+z)-\phi_{0,i}(y)\right)K(y,y+z)j(z)\,dz\\
&\quad-\left\langle \nabla^2 f(x), \int_{\R^d} z\otimes \left(\phi_0(y+z)-\phi_0(y)\right)K(y,y+z)j(z)\,dz\right\rangle,\\
I_2^\e(x,y)&:=\e^{-1}\sum_{i=1}^d\int_{\{|z|> \e^{-1}\}}\left(\partial_{x_i}f(x+\e z)-\partial_{x_i}f(x)\right)
\left(\phi_{0,i}(y+z)-\phi_{0,i}(y)\right)K(y,y+z)j(z)\,dz.
\end{align*}

By the mean value theorem, for all $x,z\in \R^d$ with $|z|\le \e^{-1}$,
\begin{align*}
\partial_{x_i}f(x+\e z)-\partial_{x_i}f(x)=
\e\left\langle \nabla \partial_{x_i}f(x), z\right\rangle+J_{\e}(x,z),
\end{align*}
where
$$
|J_{\e}(x,z)| \le c_1\e^2
\left(\sup_{x'\in \R^d:|x'-x|\le 1}|\nabla^3 f(x')|\right)|z|^2\le c_2\|f\|_{\M_{\beta},3}(1+|x|)^{-d-\beta}\e^2|z|^2.
$$
This  along with  $\int_{\R^d}|z|^2j(z)\,dz<\infty$ yields that
\begin{align*}
|I_1^\e(x,y)|&\le c_3\left(\e^{-1}\|\phi_0\|_\infty\int_{\{|z|\le \e^{-1}\}}|J_\e(x,z)|j(z)\,dz+\|\phi_0\|_\infty|\nabla^2 f(x)|
\int_{\{|z|>\e^{-1}\}}|z|j(z)\,dz\right)\\
&\le c_4\|\phi_0\|_\infty\left(\sum_{i=2}^3\|f\|_{\M_{\beta},i}\right)\left(\e\int_{\R^d}|z|^2j(z)\,dz+ \int_{\{|z|>\e^{-1}\}}|z| j(z)\,dz\right)(1+|x|)^{-d-\beta}\\
&\le c_5\|\phi_0\|_\infty\left(\sum_{i=2}^3\|f\|_{\M_{\beta},i}\right)(1+|x|)^{-d-\beta} (\e+\gamma_9(\e)).
\end{align*}
On the other hand, according to \eqref{l1-3-5}, we can derive
\begin{align*}
|I_2^\e(x,y)|&\le c_6\e^{-1}\|f\|_{\M_{\beta},1}\|\phi_0\|_\infty
\left((1+|x|)^{-d-\beta} \int_{\{|z|>\e^{-1}\}}j(z)\,dz+\int_{\{|z|>\e^{-1}\}}
j(z)(1+|x+\e z|)^{-d-\beta}\,dz\right)\\
&\le c_7\|f\|_{\M_{\beta},1}\|\phi_0\|_\infty\gamma_{3,\beta}(\e)(1+|x|)^{-d-\beta}.
\end{align*}

Combining the above estimates,
we can get \eqref{l4-2-1}.
\end{proof}

\begin{lemma}\label{l4-3} Let $\Lambda_\e^7 f(x,y)$ be defined by \eqref{e2-3-4}.
For every $\beta>0$, there exists $C_3:=C_3(\beta)>0$ such that for every $f\in \M_{\beta}$, $x\in \R^d$ and $y\in \T^d$,
\begin{equation}\label{l4-3-1}
|\Lambda_\e^7 f(x,y)|\le C_3\left(\e+\gamma_{3,\beta}(\e)+\gamma_9(\e)\right)\left(\sum_{i=1}^3\|f\|_{\M_{\beta},i}\right)(1+|x|)^{-d-\beta}.
\end{equation}
\end{lemma}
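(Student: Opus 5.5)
Since $\phi_0(y)$ does not depend on $z$, we can pull it out and write
$$
\Lambda_\e^7 f(x,y)=\left\langle \e^{-1}\,{\rm p.v.}\int_{\R^d}\left(\nabla f(x+\e z)-\nabla f(x)\right)K(y,y+z)j(z)\,dz-\nabla^2 f(x)\Phi_0(y),\ \phi_0(y)\right\rangle .
$$
By \eqref{e2-2a} and $\|\Phi_0\|_\infty<\infty$ (which holds because $\int|z|^2j<\infty$), $\|\phi_0\|_\infty$ is a finite constant. It therefore suffices to bound the vector-valued bracket. I split the integral into the regions $\{|z|\le \e^{-1}\}$ and $\{|z|>\e^{-1}\}$, following the proof of Lemma \ref{l4-2}. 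In the same way I split $\Phi_0(y)={\rm p.v.}\int_{\{|z|\le\e^{-1}\}}zK(y,y+z)j(z)\,dz+\int_{\{|z|>\e^{-1}\}}zK(y,y+z)j(z)\,dz$. The second piece is absolutely convergent, with bound $\int_{\{|z|>\e^{-1}\}}|z|j(z)\,dz\le \e\int_{\{|z|>\e^{-1}\}}|z|^2j(z)\,dz\le \gamma_9(\e)$.

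On $\{|z|\le \e^{-1}\}$, I Taylor expand to second order: $\nabla f(x+\e z)-\nabla f(x)=\e\nabla^2 f(x)z+R_\e(x,z)$, where $|R_\e(x,z)|\le c\,\e^2|z|^2\sup_{|x'-x|\le1}|\nabla^3f(x')|\le c\,\|f\|_{\M_\beta,3}(1+|x|)^{-d-\beta}\e^2|z|^2$. The linear term, multiplied by $\e^{-1}$, gives exactly $\nabla^2 f(x)\,{\rm p.v.}\int_{\{|z|\le\e^{-1}\}}zK(y,y+z)j(z)\,dz$. This cancels the truncated part of $\nabla^2 f(x)\Phi_0(y)$, and this cancellation is why the principal value is harmless: the remainder is absolutely integrable near $0$. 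The remainder contributes at most $c\,\e\int_{\R^d}|z|^2j(z)\,dz\,\|f\|_{\M_\beta,3}(1+|x|)^{-d-\beta}\le c\,\e\,(\cdots)$. The leftover tail of $\Phi_0$ contributes $|\nabla^2f(x)|\gamma_9(\e)\le \|f\|_{\M_\beta,2}\gamma_9(\e)(1+|x|)^{-d-\beta}$.

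On $\{|z|>\e^{-1}\}$, I bound $|\nabla f(x+\e z)-\nabla f(x)|\le \|f\|_{\M_\beta,1}\big((1+|x|)^{-d-\beta}+(1+|x+\e z|)^{-d-\beta}\big)$. This gives $c\,\e^{-1}\|f\|_{\M_\beta,1}\big[(1+|x|)^{-d-\beta}\int_{\{|z|>\e^{-1}\}}j(z)\,dz+\int_{\{|z|>\e^{-1}\}}j(z)(1+|x+\e z|)^{-d-\beta}\,dz\big]$. Since $\varphi(\e)=\e^{-2}$ in this regime, $\e^{-1}=\e\varphi(\e)$, and \eqref{l1-3-5} yields the bound $c\,\e\cdot\e^{-1}\gamma_{3,\beta}(\e)(1+|x|)^{-d-\beta}=c\,\gamma_{3,\beta}(\e)(1+|x|)^{-d-\beta}$. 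Summing the three contributions gives \eqref{l4-3-1}, with constant depending on $\|\phi_0\|_\infty$, $K_2$ and $\int|z|^2j$. The only delicate point is the principal-value bookkeeping in the cancellation. I would justify it by truncating at $\{\eta<|z|\le\e^{-1}\}$, where the expansion is exact, and then letting $\eta\to0$ using the dominated remainder.
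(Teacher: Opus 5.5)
Your proposal is correct and follows essentially the same route as the paper's proof. Both Taylor-expand $\nabla f$ on $\{|z|\le \e^{-1}\}$ so that the linear term cancels the truncated part of $\Phi_0$, bound the remainder by $c\,\e\int|z|^2j(z)\,dz$ and the tail of $\Phi_0$ by $\gamma_9(\e)$, and treat $\{|z|>\e^{-1}\}$ via \eqref{l1-3-5} with $\varphi(\e)=\e^{-2}$ to obtain $\gamma_{3,\beta}(\e)$. Your explicit handling of the principal value and of the bound $\int_{\{|z|>\e^{-1}\}}|z|j(z)\,dz\le\gamma_9(\e)$ fills in steps the paper leaves implicit.
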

\begin{proof}
By the mean value theorem, for all $x,z\in \R^d$ with $|z|\le \e^{-1}$,
$$
\nabla f(x+\e z)-\nabla f(x)=\e \nabla^2 f(x) \cdot z+J_\e(x,z),
$$
where
$$
|J_\e(x,z)| \le c_1\e^2
\left(\sup_{x'\in \R^d:|x'-x|\le 1}|\nabla^3 f(x')|\right)|z|^2\\
 \le c_2\|f\|_{\M_{\beta},3}(1+|x|)^{-d-\beta}\e^2|z|^2.$$
Then,
\begin{align*}
|\Lambda_\e^7f(x,y)|&\le c_3\e^{-1}\|\phi_0\|_\infty
\left(\int_{\{|z|\le \e^{-1}\}}|J_\e(x,z)|j(z)\,dz\right)+c_3\|\phi_0\|_\infty|\nabla^2 f(x)| \int_{\{|z|>\e^{-1}\}}|z|j(z)\,dz\\
&\quad +c_3\e^{-1}\|\phi_0\|_\infty\|f\|_{\M_{\beta},1}\left(\int_{\{|z|>\e^{-1}\}}\left((1+|x|)^{-d-\beta}+(1+|x+\e z|)^{-d-\beta}\right)j(z)\,dz\right)\\
&\le c_4\left(\sum_{i=1}^3\|f\|_{\M_\beta,i}\right)\|\phi_0\|_\infty(1+|x|)^{-d-\beta}\left(\e\int_{\{|z|\le \e^{-1}\}}|z|^2j(z)\,dz
+\int_{\{|z|>\e^{-1}\}}|z|j(z)\,dz\right)\\
&\quad +c_4\|f\|_{\M_\beta,1}\|\phi_0\|_\infty\e^{-1}\int_{\{|z|>\e^{-1}\}}j(z)(1+|x+\e z|)^{-d-\beta}\,dz\\
&\le c_5\left(\e+\gamma_{3,\beta}(\e)+\gamma_9(\e)\right)\left(\sum_{i=1}^3\|f\|_{\M_\beta,i}\right)(1+|x|)^{-d-\beta},
\end{align*} where the last inequality follows from \eqref{l1-3-5}.
This proves the desired assertion.
\end{proof}

\begin{lemma}\label{l4-4}
Suppose that \eqref{t1-3-1} holds for some $\beta>0$. Let $\Upsilon_\e^3 f(x,y)$ be defined by \eqref{e2-3-8}. Then, there exists a positive constant $C_4:=C_4(\beta)$ such that for every $f\in \M_\beta$, $x\in \R^d$ and $y\in \T^d$,
\begin{equation}\label{l4-4-1}
|\Upsilon_\e^3 f(x,y)|\le c_1\left(\sum_{i=0}^3\|f\|_{\M_{\beta},i}\right)(1+|x|)^{-d-\beta}.
\end{equation}
\end{lemma}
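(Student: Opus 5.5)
The plan is to bound each of the five terms in the definition \eqref{e2-3-8} of $\Upsilon_\e^3 f$ separately, using the lemmas just proved, and then absorb all $\e$-dependent prefactors via \eqref{t1-3-1}. This mirrors the proofs of Lemma \ref{l1-6} and Lemma \ref{l3-4}.

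First, since $\phi_0$ solves \eqref{e2-2} with right-hand side $\Phi_0$, and $\Phi_0$ is independent of $\e$, Assumption \ref{a1-2} gives $\|\phi_0\|_\infty+\|\nabla\phi_0\|_\infty\le c_0\|\Phi_0\|_\infty<\infty$. Here $\|\Phi_0\|_\infty$ is finite by the analogue of \eqref{e:PHI}. In the supercritical regime $\int|z|^2j<\infty$, so $\int_{\{|z|>1\}}|z|j(z)\,dz<\infty$. The principal value near the origin is controlled using the $C^1$ bound of $K$ together with the symmetry of $j$, exactly as in the subcritical case.

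Next, Lemma \ref{l4-1}, Lemma \ref{l4-2} and Lemma \ref{l4-3} give
\[
|\Lambda^i_\e f(x,y)|\le c\,\bigl(\e+\e^{-1}\gamma_{3,\beta}(\e)+\gamma_{3,\beta}(\e)+\gamma_9(\e)\bigr)\Bigl(\sum_{k=0}^3\|f\|_{\M_\beta,k}\Bigr)(1+|x|)^{-d-\beta},\qquad i=5,6,7.
\]
For $\e\in(0,1)$ we have $\gamma_{3,\beta}(\e)\le \e^{-1}\gamma_{3,\beta}(\e)$, so the prefactor is at most $2\xi_{4,\beta}(\e)$ from \eqref{e2-3-12}. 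This quantity is bounded for small $\e$ by \eqref{t1-3-1}.

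Then I would treat the remaining two terms. For $\Theta^3_\e f$, the matrix $\bar A_0(y)$ in \eqref{e2-3-7a} is bounded uniformly in $y$. Indeed,
\[
\int|z|^2j<\infty,\qquad |\phi_0(y+z)-\phi_0(y)|\le c\,(|z|\wedge1),\qquad \|\Phi_0\|_\infty,\ \|\phi_0\|_\infty<\infty.
\]
The $z\otimes(\phi_0(y+z)-\phi_0(y))$ term is therefore integrable against $j$, being bounded by $|z|^2$ near $0$ and by $|z|$ far away. Hence $|\Theta^3_\e f(x,y)|\le c\|f\|_{\M_\beta,2}(1+|x|)^{-d-\beta}$. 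The last term obeys $|\lambda\e\langle\nabla f(x),\phi_0(y)\rangle|\le c\|f\|_{\M_\beta,1}(1+|x|)^{-d-\beta}$.

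Summing these bounds gives the claim with a constant $c\,(1+\sup_{\e}\xi_{4,\beta}(\e))$. The only mild point is the range of $\e$: \eqref{t1-3-1} only controls small $\e$. For $\e$ bounded away from $0$, in $(0,1)$, I would argue that $\xi_{4,\beta}(\e)$ is finite and locally bounded directly from its definition, giving a uniform bound on $(0,1)$. Alternatively, I would simply state the estimate for $\e\in(0,\e_0]$, as was implicitly done in Lemma \ref{l1-6}. I expect no real obstacle; the only care needed is justifying the uniform boundedness of $\bar A_0(y)$ and $\Phi_0$.
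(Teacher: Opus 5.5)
Your proposal is correct and follows the paper's proof essentially verbatim. You bound $\Theta_\e^3 f$ via the uniform boundedness of $\bar A_0(y)$, collect the bounds of Lemmas \ref{l4-1}--\ref{l4-3} into $\xi_{4,\beta}(\e)$, and then invoke \eqref{t1-3-1}; you also treat explicitly the $\lambda\e\langle\nabla f,\phi_0\rangle$ term, which the paper absorbs silently.

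On the range of $\e$, your fallback of restricting to $\e\in(0,\e_0]$ is the right choice, and it is what the paper does implicitly. Your first option is not automatic: the supremum term in $\gamma_{3,\beta}(\e)$ at $x=0$ requires $\sup_{|w|\ge 1/(2\e)} j(w)<\infty$. For $\e$ near $1$ this means $j$ must be bounded away from the origin, which the standing assumptions do not guarantee.
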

\begin{proof}
It is easy to see that
for every $f\in \M_\beta$, $x\in \R^d$ and $y\in \T^d$,
$$
\left|\Theta_\e^3 f(x,y)\right|\le c_1\|f\|_{\M,\beta,2}(1+|x|)^{d-\beta}.
$$
Combining it with \eqref{l4-1-1}, \eqref{l4-2-1} and \eqref{l4-3-1}, we find that                                                                                                                             \begin{align*}
|\Upsilon_\e^3 f(x,y)|
&\le c_2\left[|\Theta_\e^3 f(x,y)|+\left(\sum_{i=0}^{3}\|f\|_{\M_{\beta},i}\right)
\xi_{4,\beta}(\e)(1+|x|)^{-d-\beta}\right]\\
&\le c_3\left(\sum_{i=0}^{3}\|f\|_{\M_{\beta},i}\right)\left(1+\xi_{4,\beta}(\e)\right)(1+|x|)^{-d-\beta}\\
&\le c_4\left(\sum_{i=0}^{3}\|f\|_{\M_{\beta},i}\right)(1+|x|)^{-d-\beta},
\end{align*}
where the last inequality follows from \eqref{t1-3-1}. The proof is complete.
\end{proof}

\section{Proofs of main results and examples}\label{section4}

In this section, we first prove the main results from Section \ref{section2} according to the subcritical \(\alpha\)-stable regime, critical regime and supercritical diffusive regime. We then present the proofs for all examples introduced in Section \ref{section1}.

\subsection{Subcritical $\alpha$-stable regime}
Recall that for
every $f\in \M_{\beta}$ and $x\in \R^d$,
\begin{equation}\label{e2-1}
\begin{split}
\sL_\e f(x)&= \varphi(\e)\e^{-d}\int_{\R^d}\delta f(x;z)K\left(\e^{-1} x ,\e^{-1}({x+z})\right)j \left(\e^{-1} z \right)\,dz+
\e\varphi(\e)\left\langle \nabla f(x), \Phi_\e\left(\e^{-1} x \right)\right\rangle\\
&=\bar \sL_\alpha f(x)+\sum_{i=1}^2\Lambda_\e^i f\left(x,\e^{-1} x \right)+\Theta_\e^1 f\left(x,\e^{-1} x \right)+\e\varphi(\e)\left\langle \nabla f(x), \Phi_\e\left(\e^{-1} x \right)\right\rangle,
\end{split}
\end{equation}
where $\Lambda_\e^1 f(x,y)$, $\Lambda_\e^2 f(x,y)$, $\Theta_\e^1 f(x,y)$ and $\Phi_\e:\T^d \to \R^d$ are defined by \eqref{e1-5a} and \eqref{e2-1a} respectively.
It is easy to see that for any $f\in \M_{\beta}$, $k\ge0$, $i=1,2$, $x\in \R^d$ and $y\in \T^d$,
\begin{equation}\label{e2-8a}
\nabla^k\Lambda_\e^i f(\cdot,y)(x)=\Lambda_\e^i\left(\nabla^k f\right)(x,y),\,\,\nabla^k\Theta_\e^1 f(\cdot,y)(x)=\Theta_\e^1\left(\nabla^k f\right)(x,y),\,\,
\nabla^k\Gamma_\e(f,\phi)(\cdot,y)(x)=\Gamma_\e\left(\nabla^k f,\phi\right)(x,y).
\end{equation}
Fix $\beta>0$ such that \eqref{t1-1-1} is satisfied. For $h\in \M_{\beta}$, let $\bar u$ be the solution to \eqref{e1-3}. By Lemma \ref{l1-1}, $\bar u \in \M_{\beta_0}$, and so for all $x\in \R^d$ and $y\in \T^d$,
$
\nabla^k F_1^\e(\cdot,y)(x)=\Upsilon_1^\e\left(\nabla^k \bar u\right)(x,y).
$
This, along with \eqref{l1-6-1}, yields that $F_1^\e(\cdot,y)\in \M_{\beta_0}$
and, for all $\e\in (0,1)$ and $k\ge0$,
\begin{equation}\label{e2-4}
\sup_{y\in \T^d}\|F_1^\e(\cdot,y)\|_{\M_{\beta_0},k}
\le c_1(k) \sum_{i=k}^{k+2}\|\bar u\|_{\M_{\beta_0},i}.
\end{equation}
Indeed, as indicated by the argument for \eqref{l1-6-2}, the leading term for the estimate of
$\|F^\e_1(\cdot,y)\|_{\M_{\beta_0},k}$ is $\|\Theta_\e^1\bar u(\cdot,y)\|_{\M_{\beta_0},k}$. Furthermore,
by \eqref{e2-3aa}, it holds that for all $k\ge0$, $x\in \R^d$ and $y\in \T^d$,
$
\sL \nabla_x^k\phi_1^\e(x,\cdot)(y)=-\nabla_x^k F_1^\e(x,y)+\nabla_x^k \bar F_1^\e(x).$
This along with Assumption \ref{a1-2} and \eqref{e2-4}
gives
\begin{equation}\label{e2-11}
\sup_{\e\in (0,1)}\left(\sup_{y\in \T^d}\left|\nabla_x^k \phi_1^\e(x,y)\right|
+\sup_{y\in \T^d}\left|\nabla_y\nabla_x^k \phi_1^\e(x,y)\right|\right)\le c_2(k)(1+|x|)^{-d-\beta_0},\quad x\in \R^d,k\ge0,
\end{equation}
where $\nabla_x$ and $\nabla_y$ denote the gradient for the variables $x\in \R^d$ and $y\in \T^d$ respectively.

\begin{lemma}\label{l2-2}
Let $\phi_1^\e:\R^d \times\T^d\to \R$ be the solution to \eqref{e2-3aa}, and define $\Psi^\e:\R^d \to \R$ by
$\Psi^\e(x):=\phi_1^\e\left(x,\e^{-1} x \right)$. Then,
\begin{equation}\label{l2-2-1}
\varphi(\e)^{-1}\sL_\e \Psi^\e(x)=-F_1^\e\left(x,\e^{-1} x \right)+\bar F_1^\e(x)+G_1^\e\left(x,\e^{-1} x \right)
+\lambda \varphi(\e)^{-1}\phi_1^\e\left(x,\e^{-1} x \right),\quad x\in \R^d,
\end{equation}
where $G_1^\e:\R^d\times \T^d\to \R$ is defined by \eqref{l2-2-2}.
Moreover, for every $k\ge 0$, there exist positive constants $C_1(k)$ and $C_2(k)$ such that for all $\e\in (0,1)$, $x\in \R^d$ and $y\in \T^d$,
\begin{equation}\label{l2-2-3}
\begin{split}
|\nabla_x^k  G_1^\e(x,y)|&\le C_1(k)\xi_{2,\beta_0}(\e)\sum_{i=k}^{k+2}\left(\sup_{y\in \T^d}\|\phi_1^\e(\cdot,y)\|_{\M_{\beta_0},i}
+\sup_{y\in \T^d}\|\nabla_y\phi_1^\e(\cdot,y)\|_{\M_{\beta_0},i}\right)
(1+|x|)^{-d-\beta_0}\\
 &\le C_2(k)\xi_{2,\beta_0}(\e)(1+|x|)^{-d-\beta_0},
\end{split}
\end{equation}
where $\xi_{2,\beta_0}(\e)$ is defined by \eqref{e1-8a}.
\end{lemma}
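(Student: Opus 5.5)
The plan has two parts: an algebraic identity \eqref{l2-2-1}, obtained by expanding $\sL_\e$ on a two-scale function, and then term-by-term bounds on $G_1^\e$.

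\textbf{Identity.} Fix $x$ and write $\Psi^\e(x+z)-\Psi^\e(x)$ with $y=\e^{-1}x$ and $y':=y+\e^{-1}z$. We split it as
\[
\big(\phi_1^\e(x+z,y)-\phi_1^\e(x,y)\big)+\big(\phi_1^\e(x,y')-\phi_1^\e(x,y)\big)+\delta_2^\e\phi_1^\e(x,y;z).
\]
Insert this into
\[
\varphi(\e)^{-1}\sL_\e\Psi^\e(x)=\e^{-d}\,{\rm p.v.}\int\big(\Psi^\e(x+z)-\Psi^\e(x)\big)K(y,y')\,j(\e^{-1}z)\,dz .
\]
Each piece is handled as follows.
\begin{itemize}
\item The middle term, after the change of variables $z\mapsto \e z$, is exactly $\sL\phi_1^\e(x,\cdot)(y)$. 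By \eqref{e2-3aa} this equals $-F_1^\e(x,y)+\bar F_1^\e(x)$.
\item For the first term we add and subtract the compensator $\langle\nabla_x\phi_1^\e(x,y),z\I_{\{|z|\le1\}}\rangle$. This produces the $\delta\phi_1^\e$ integral.
\item The compensator integrated against $K(y,y')j(\e^{-1}z)$ gives $\e\langle\nabla_x\phi_1^\e,\Phi_\e(y)\rangle$, using \eqref{e2-1a} after rescaling.
\item The third term is the $\delta_2^\e$ integral.
\end{itemize}
Comparing with the definition \eqref{l2-2-2} of $G_1^\e$, which contains the extra $-\lambda\varphi(\e)^{-1}\phi_1^\e$, gives \eqref{l2-2-1}. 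The principal values need justification, which follows from $C^1$ regularity in both variables via \eqref{e2-11}.

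\textbf{Bounds on $G_1^\e$.} Differentiating in $x$ commutes with every term, as in \eqref{e2-8a}. It therefore suffices to treat $k=0$ with $\phi_1^\e$ replaced by $\nabla_x^k\phi_1^\e$. Set
\[
N_i:=\sup_y\|\phi_1^\e(\cdot,y)\|_{\M_{\beta_0},i}+\sup_y\|\nabla_y\phi_1^\e(\cdot,y)\|_{\M_{\beta_0},i}.
\]
We estimate the four terms of $G_1^\e$ one at a time.

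\begin{enumerate}
\item \emph{The $\delta$-term.} After rescaling it equals $\varphi(\e)^{-1}\cdot\varphi(\e)\int\delta\phi_1^\e(\cdot,y)(x;\e z)K\,j(z)\,dz$. Split into $|z|\le \e^{-1}$ and $|z|>\e^{-1}$.
  \begin{itemize}
  \item On $|z|\le\e^{-1}$, Taylor expansion gives $\e^2|z|^2$ times $\|\cdot\|_{\M,2}$. On the range $1<|z|\le\e^{-1}$ one instead uses the first-order bound $\e|z|$ combined with $\e|z|\le1$. This is bounded by $\varphi(\e)^{-1}\e^{-1}\gamma_2(\e)$.
  \item The tail $|z|>\e^{-1}$ is handled exactly as in \eqref{l1-3-5}, giving $\varphi(\e)^{-1}\e^{-1}\gamma_{3,\beta_0}(\e)$.
  \end{itemize}
  The reason for the factor $\e^{-1}$ is that the estimates lose one power of $\e$ relative to $\gamma_2$. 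The main subtlety is choosing the splitting so that only $\gamma_2$ appears, and not a worse moment.
\item \emph{The drift term.} $\e\langle\nabla_x\phi_1^\e,\Phi_\e\rangle$ is bounded by $\e\gamma_4(\e)N_1$, using \eqref{e:PHI}.
\item \emph{The mixed term.} This is the key step. Write
\[
\delta_2^\e\phi_1^\e=\big(\phi_1^\e(x+z,y')-\phi_1^\e(x,y')\big)-\big(\phi_1^\e(x+z,y)-\phi_1^\e(x,y)\big).
\]
It is bounded both by $2\sup|\phi_1^\e(x+z,\cdot)-\phi_1^\e(x,\cdot)|$ and by $|z|\,\|\nabla_y\nabla_x\phi_1^\e\|\,\min(1,\e^{-1}|z|)$, the latter by the mean value theorem in $x$ and then in $y$.
  \begin{itemize}
  \item For small $|z|$ this is of order $\e^{-1}|z|^2$ with decay $(1+|x|)^{-d-\beta_0}$.
  \item For $|z|\le1$ one uses $|z|\cdot$(Lipschitz in $y$), giving $|z|\cdot|\e^{-1}z|\wedge1$.
  \end{itemize}
  After scaling $z\to\e z$, these integrate to $\varphi(\e)^{-1}\e^{-1}(\gamma_2+\gamma_{3,\beta_0})$, together with a large-jump tail handled as in \eqref{l1-3-5}.
\item \emph{The resolvent term.} It trivially gives $\varphi(\e)^{-1}$.
\end{enumerate}

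Summing, every contribution is at most $\xi_{2,\beta_0}(\e)\sum_{i\le k+2}N_{i}\,(1+|x|)^{-d-\beta_0}$. This is the first inequality in \eqref{l2-2-3}. The second inequality follows from \eqref{e2-11} applied to $\nabla_x^i\phi_1^\e$, whose constants are uniform in $\e$; that uniformity is ultimately a consequence of \eqref{e2-4} and Lemma \ref{l1-1}.

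I expect the main obstacle to be step 3, the mixed $\delta_2^\e$ term. There one must use regularity in $y$ (Assumption \ref{a1-2}, via $\nabla_y\nabla_x\phi_1^\e$) to gain the factor needed to integrate $|z|^2j$ near the origin, while keeping the spatial decay in $x$ for the large jumps.
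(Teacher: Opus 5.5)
Your proposal is correct and follows essentially the paper's route. You use the same three-way splitting of $\Psi^\e(x+z)-\Psi^\e(x)$: the compensator yields $\e\langle\nabla_x\phi_1^\e,\Phi_\e\rangle$ and the $y$-increment yields $\sL\phi_1^\e(x,\cdot)(y)$. You also use the same three-range bounds after rescaling: $\e^2|z|^2$ (resp.\ $\e|z|^2$ for $\delta_2^\e$) on $\{|z|\le 1\}$, $\e|z|$ on $\{1<|z|\le\e^{-1}\}$, and $O(1)$ plus the tail handled via \eqref{l1-3-5} on $\{|z|>\e^{-1}\}$. Your mean-value bound $|z|\,\|\nabla_y\nabla_x\phi_1^\e\|\min(1,\e^{-1}|z|)$ is just the paper's double-integral representation combined with periodicity in $y$. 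The one imprecision is that absolute integrability of the $\delta$- and $\delta_2^\e$-integrands needs boundedness of $\nabla_x^2\phi_1^\e$ and $\nabla_x\nabla_y\phi_1^\e$ (both supplied by \eqref{e2-11}), not merely $C^1$ regularity.
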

\begin{proof}
For any $x\in \R^d$ and $\varepsilon\in (0,1)$,
\begin{align*}
&\varphi(\e)^{-1}\sL_\varepsilon \Psi^\e(x)\\
&= \varepsilon^{-d}{\rm p.v.}\int_{\R^d}(\phi_1^\varepsilon(x+z,\e^{-1}(x+z))-\phi_1^\varepsilon(x,\e^{-1} x))
K(\e^{-1} x,\e^{-1}(x+z))j (\e^{-1} z )\,dz\\
&=\varepsilon^{-d}{\rm p.v.}\int_{\R^d} \left(\phi_1^\varepsilon(x,\e^{-1}(x+z))-\phi_1^\varepsilon(x,\e^{-1} x)\right)
K(\e^{-1} x,\e^{-1}(x+z))j (\e^{-1} z )\,dz\\
&\quad +\varepsilon^{-d}\int_{\R^d}\left(\phi_1^\e\left(x+z,\e^{-1} x\right)-\phi_1^\e\left(x,\e^{-1} x\right)-
\left\langle z, \nabla_x\phi_1^\e\left(\cdot,\e^{-1} x\right)(x)\right\rangle\I_{\{|z|\le 1\}}\right)
K(\e^{-1} x,\e^{-1}(x+z))j(\e^{-1} z )\,dz\\
&\quad+
\e\left\langle \nabla_x \phi_1^\e\left(\cdot,
\e^{-1}x
\right)(x),\Phi_\e\left(\e^{-1} x \right)\right\rangle+\varepsilon^{-d}\int_{\R^d}\delta_2^\varepsilon \phi_1^\varepsilon\left(x,\e^{-1} x;z\right)
K(\e^{-1} x,\e^{-1}(x+z))j (\e^{-1} z )\,dz.
\end{align*}
Hence, by the
change of variables,
\eqref{l2-2-2} and  \eqref{e2-3aa}, we can write
\begin{align*}
\varphi(\e)^{-1} \sL_\varepsilon \Psi^\e(x)&=\sL\phi_1^\varepsilon(x,\cdot)\left(\e^{-1} x\right)+G_1^\varepsilon\left(x,\e^{-1} x\right)
+\lambda \varphi(\e)^{-1}\phi_1^\e(x,\e^{-1} x )\\
&=
-F_1^\e\left(x,\e^{-1} x \right)+\bar F_1^\e(x)+G_1^\e\left(x,\e^{-1} x \right)+\lambda \varphi(\e)^{-1}\phi_1^\e(x,\e^{-1}x),
\end{align*}
which
completes the proof of \eqref{l2-2-1}.

By
\eqref{e2-11} we know $\phi_1(\cdot,y)\in \M_{\beta_0}$.
According to the mean value theorem, we have
\begin{align*}
 |\delta \phi_1^\e(\cdot,y)(x;\e z)|
&\le c_1\left(\sum_{i=0}^2
\sup_{y\in \T^d}\|\phi_1^\e(\cdot,y)\|_{\M_{\beta_0},i}\right)(1+|x|)^{-d-\beta_0}\left(\e^2|z|^2\I_{\{|z|\le 1\}}
+\e|z|\I_{\{1<|z|\le \e^{-1}\}}+\I_{\{|z|>\e^{-1}\}}\right)\\
& \quad +c_1\sup_{y\in \T^d}\|\phi_1^\e(\cdot,y)\|_{\M_{\beta_0},0}(1+|x+\e z|)^{-d-\beta_0}\I_{\{|z|>\e^{-1}\}}.
\end{align*}
This along with \eqref{l1-3-5} yields that
\begin{align*}
&\e^{-d}\left|\int_{\R^d}\delta\phi_1^\varepsilon(\cdot, y)(x;z)K\left(y,y+\e^{-1} z\right)j \left(\e^{-1} z \right)\,dz\right|\\
&=\left|\int_{\R^d}\delta\phi_1^\varepsilon(\cdot, y)(x;\e z)K\left(y,y+z\right)j(z)\,dz\right|\\
&\le c_2\e^{-1}\varphi(\e)^{-1}\left(\gamma_2(\e)+\gamma_{3,\beta_0}(\e)\right)\left(\sum_{i=0}^2
\sup_{y\in \T^d}\|\phi_1^\e(\cdot,y)\|_{\M_{\beta_0},i}\right)(1+|x|)^{-d-\beta_0}.
\end{align*}
On the other hand, for fixed $x,z\in \R^d$ and $y\in \T^d$,  define $
H^\e(t,s):=\phi_1^\e\left(x+t\e z,y+s z\right).$
It is easy to see that
\begin{align*}
\delta_2^\e\phi_1^\e(x,y;\e z)&=
H^\e(1,1)-H^\e(1,0)-H^\e(0,1)+H^\e(0,0)
=\int_0^1\int_0^1 \frac{\partial^2 H^\e(t,s)}{\partial t \partial s}\,dt\,ds\\
&=\e\int_0^1\int_0^1 \left\langle \nabla_x\nabla_y \phi_1^\e\left(x+t\e z,y+s z\right), z\otimes z\right\rangle \,dt\,ds
\end{align*} and
\begin{align*}
\delta_2^\e\phi_1^\e(x,y;\e z)&=
H^\e(1,1)-H^\e(1,0)-H^\e(0,1)+H^\e(0,0)
=\int_0^1\frac{\partial H^\e(t,1)}{\partial t}\,dt-\int_0^1\frac{\partial H^\e(t,0)}{\partial t}\,dt\\
&=\e\left(\int_0^1\left\langle \nabla_x\phi_1^\e\left(x+t\e z,y+z\right), z\right\rangle \,dt-
\int_0^1\left\langle \nabla_x\phi_1^\e\left(x+t\e z,y\right), z\right\rangle \,dt\right).
\end{align*}
These together with the definition of $\delta_2^\e\phi_1^\e(x,y;\e z)$ in turn yield that
\begin{equation}\label{l2-2-4}
\begin{split}
\left|\delta_2^\e\phi_1^\e(x,y;\e z)\right|\le& c_3
\sum_{i=0}^1\left(
\sup_{y\in \T^d}\|\phi_1^\e(\cdot,y)\|_{\M_{\beta_0},i}+\sup_{y\in \T^d}\|\nabla_y\phi_1^\e(\cdot,y)\|_{\M_{\beta_0},i}\right)(1+|x|)^{-d-\beta_0}\\
&\quad\times\left(\e|z|^2\I_{\{|z|\le 1\}}+\e|z|\I_{\{1<|z|\le \e^{-1}\}}
+\I_{\{|z|>\e^{-1}\}}\right)\\
&+c_3\sup_{y\in \T^d}\|\phi_1^\e(\cdot,y)\|_{\M_{\beta_0},0}(1+|x+\e z|)^{-d-\beta_0}\I_{\{|z|>\e^{-1}\}}.
\end{split}
\end{equation}
Hence, by \eqref{l1-3-5}, we can verify directly that for every $x\in \R^d$ and $y\in \T^d$
\begin{align*}
&\e^{-d}\left|\int_{\R^d}\delta_2^\varepsilon \phi_1^\varepsilon\left(x,y;z\right)
K\left(y,y+\e^{-1} z\right)j \left(\e^{-1} z \right)\,dz\right|\\
&=
\left|\int_{\R^d}\delta_2^\varepsilon \phi_1^\varepsilon\left(x,y;\e z\right)
K\left(y,y+z\right)j(z)\,dz\right|\\
&\le c_4\e^{-1}\varphi(\e)^{-1}\left(\gamma_2(\e)+\gamma_{3,\beta_0}(\e)\right)\sum_{i=0}^2
\left(\sup_{y\in \T^d}\|\phi_1^\e(\cdot,y)\|_{\M_{\beta_0},i}
+\sup_{y\in \T^d}\|\nabla_y\phi_1^\e(\cdot,y)\|_{\M_{\beta_0},i}
\right)(1+|x|)^{-d-\beta_0}.
\end{align*}
Therefore, putting all the estimates above together with \eqref{e2-3aa}, \eqref{e:PHI} and \eqref{e2-11}, we obtain that
\begin{align*}
|G_1^\e(x,y)|&\le c_4\left(\e^{-1}\varphi(\e)^{-1}\left(\gamma_2(\e)+\gamma_{3,\beta_0}(\e)\right)+\e\gamma_4(\e)+\varphi(\e)^{-1}\right)\\
&\quad\times\sum_{i=0}^2 \left(
\sup_{y\in \T^d}\|\phi_1^\e(\cdot,y)\|_{\M_{\beta_0},i}+\sup_{y\in \T^d}\|\nabla_y\phi_1^\e(\cdot,y)\|_{\M_{\beta_0},i}\right)(1+|x|)^{-d-\beta_0}\\
&\le c_5\xi_{2,\beta_0}(\e)(1+|x|)^{-d-\beta_0}.
\end{align*}

Furthermore, for every $k\ge 1$,
\begin{align*}
\nabla^k_x G_1^\e(x,y)=&\varepsilon^{-d}\int_{\R^d} \delta\nabla_x^k\phi_1^\varepsilon(\cdot, y)(x;z)K\left(y,y+\e^{-1} z\right)j \left(\e^{-1} z \right)\,dz\\
&+\varepsilon^{-d}\int_{\R^d} \delta^\varepsilon_2\nabla_x^k\phi_1^\varepsilon(x,y;z)K\left(y,y+\e^{-1} z\right)j \left(\e^{-1} z \right)\,dz
+\e\left\langle \nabla_x^{k} \phi_1^\e\left(x,y\right),\Phi_\e\left(y\right)\right\rangle\\
&-\lambda\varphi(\e)^{-1}\nabla_x^{k}\phi_1^\e\left(x,y\right).
\end{align*}
Then, applying \eqref{e2-11} and following the same arguments as above, we can prove \eqref{l2-2-3}.
\end{proof}

\begin{remark}\label{r2-1} Following the proof of Lemma \ref{l2-2}, we can prove
the next
assertion. \emph{Suppose that there exists $\theta\in [0,1)$ such that \eqref{r1-1-1} holds.  Then, for all $k\ge0$, there are constants $C_3(k)$ and $C_4(k)$ such that
\begin{align*}
&|\nabla_x^k G_1^\e(x,y)|\\
&\le C_3(k)(1+|x|)^{-d-\beta_0}\begin{cases} \tilde \xi_{2,\beta_0}(\e)\sum_{i=k}^{k+1}\sup\limits_{y\in \T^d}\|\phi_1^\e(\cdot,y)\|_{\M_{\beta_0},i}
,\,\, &\theta=0,\\
\tilde\xi_{2,\beta_0}(\e)\sum_{i=k}^{k+1}\left(\sup\limits_{y\in \T^d}\|\phi_1^\e(\cdot,y)\|_{\M_{\beta_0},i}
+\sup\limits_{y,z\in \T^d}
\left\|\frac{\left|\nabla_x\phi_1^\e(\cdot,y+z)-\nabla_x\phi_1^\e(\cdot,y)\right|}{|z|^\theta}\right\|_{\M_{\beta_0},i}\right),\,\,&\theta\in (0,1)\end{cases}\\
&\le C_4(k)\tilde \xi_{2,\beta_0}(\e)(1+|x|)^{-d-\beta_0},
\end{align*}
where
\begin{align}\label{r2-1-3}
\tilde \xi_{2,\beta_0}(\e):=\e^{-1}\varphi(\e)\left(\tilde \gamma_2(\e)+\gamma_{3,\beta_0}(\e)\right)+\e\gamma_4(\e)+\varphi(\e)^{-1},
\end{align}
and $\tilde \gamma_2(\e)$ is defined by \eqref{r1-1-3}.}
Indeed, take $k=0$ for example. In order to prove the assertion above,
instead of \eqref{l2-2-4}, we need
to apply the following
estimates: for $\theta=0$,
\begin{align*}
|\delta_2^\e\phi_1^\e(x,y;\e z)|\le& c_1\left(\sum_{i=0}^1
\sup_{y\in \T^d}\|\phi_1^\e(\cdot,y)\|_{\M_{\beta_0},i}\right)(1+|x|)^{-d-\beta_0}\left(\e|z|\I_{\{0<|z|\le \e^{-1}\}}
+\I_{\{|z|>\e^{-1}\}}\right)\\
&+c_1\sup_{y\in \T^d}\|\phi_1^\e(\cdot,y)\|_{\M_{\beta_0},0}(1+|x+\e z|)^{-d-\beta_0}\I_{\{|z|>\e^{-1}\}};
\end{align*}
for $\theta\in (0,1)$,
\begin{align*}
|\delta_2^\e\phi_1^\e(x,y;\e z)|
&\le c_1
\sum_{i=0}^1\left(
\sup_{y\in \T^d}\|\phi_1^\e(\cdot,y)\|_{\M_{\beta_0},i}+\sup_{y,z\in \T^d}
\left\|\frac{\left|\nabla_x\phi_1^\e(\cdot,y+z)-\nabla_x\phi_1^\e(\cdot,y)\right|}{|z|^\theta}\right\|_{\M_{\beta_0},i}\right)(1+|x|)^{-d-\beta_0}\\
&\quad\quad\times\left(\e|z|^{1+\theta}\I_{\{|z|\le 1\}}+\e|z|\I_{\{1<|z|\le \e^{-1}\}}
+\I_{\{|z|>\e^{-1}\}}\right)\\
&\quad +c_1\sup_{y\in \T^d}\|\phi_1^\e(\cdot,y)\|_{\M_{\beta_0},0}(1+|x+\e z|)^{-d-\beta_0}\I_{\{|z|>\e^{-1}\}}.
\end{align*}
\end{remark}

\begin{lemma}\label{l2-1}
Let $v_1^\e$ be defined by \eqref{e2-7}. Then,
\begin{equation}\label{l2-1-1}
\lambda v_1^\e(x)-\sL_\e v_1^\e(x)= G_1^\e\left(x,\e^{-1} x \right)+\bar F_1^\e(x),
\end{equation}
where $G_1^\e:\R^d\times \T^d \to \R$, $F_1^\e:\R^d\times \T^d\to \R$ and $\bar F_1^\e:\R^d \to \R$ are defined respectively by
\eqref{l2-2-2} and \eqref{l2-1-3}.
\end{lemma}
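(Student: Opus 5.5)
The proof is a direct computation: we apply $\lambda-\sL_\e$ to each of the four pieces of $v_1^\e$ in \eqref{e2-7} and check that everything cancels except $G_1^\e(x,\e^{-1}x)+\bar F_1^\e(x)$. We use linearity together with the decomposition \eqref{e2-1}. Since $u_\e$ solves \eqref{e1-3-} and $\bar u$ solves \eqref{e1-3}, the first two pieces give
\[
(\lambda-\sL_\e)(u_\e-\bar u)(x)=h(x)-\lambda\bar u(x)+\sL_\e\bar u(x)=\sL_\e\bar u(x)-\bar\sL_\alpha\bar u(x).
\]
By \eqref{e2-1}, this equals
\[
\sum_{i=1}^2\Lambda_\e^i\bar u(x,\e^{-1}x)+\Theta_\e^1\bar u(x,\e^{-1}x)+\e\varphi(\e)\langle\nabla\bar u(x),\Phi_\e(\e^{-1}x)\rangle .
\]

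The third piece is $w(x):=\e\langle\nabla\bar u(x),\phi_0^\e(\e^{-1}x)\rangle$. Write $w(x)=\sum_i\e f_i(x)g_i(\e^{-1}x)$ with $f_i=\partial_{x_i}\bar u$ and $g_i=\phi_{0,i}^\e$, and use the product rule for the nonlocal operator:
\[
\sL_\e(fg)=f\,\sL_\e g+g\,\sL_\e f+\text{(carr\'e du champ)} .
\]
First, $\sL_\e$ acting on $g_i(\e^{-1}\cdot)$ equals $\varphi(\e)(\sL g_i)(\e^{-1}x)$ by scaling. By \eqref{e2-2} this is $-\varphi(\e)\Phi_{\e,i}(\e^{-1}x)$, so after multiplying by $-\e f_i$ it cancels exactly the drift term $\e\varphi(\e)\langle\nabla\bar u,\Phi_\e\rangle$ from the first step. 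Second, the carré du champ term is, after the change of variables $z\mapsto \e^{-1}z$, exactly $\e\sum_i\Gamma_\e(\partial_{x_i}\bar u,\phi_{0,i}^\e)(x,\e^{-1}x)$ as in \eqref{e1-6}. Third, for the term $g_i\,\sL_\e f_i$ we apply \eqref{e2-1} to $\nabla\bar u$. It produces
\[
\e\Big\langle\bar\sL_\alpha\nabla\bar u+\sum_{i=1}^2\Lambda_\e^i\nabla\bar u+\Theta_\e^1\nabla\bar u,\ \phi_0^\e\Big\rangle+\e^2\varphi(\e)\langle\nabla^2\bar u,\phi_0^\e\otimes\Phi_\e\rangle .
\]
Finally, $\lambda w$ contributes $\lambda\e\langle\nabla\bar u,\phi_0^\e\rangle$. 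Collecting all of these with their signs, comparing with \eqref{l2-1-3a}, and recalling that $\Upsilon^1_\e\bar u=F_1^\e$, we expect
\[
(\lambda-\sL_\e)\bigl(u_\e-\bar u-w\bigr)(x)=F_1^\e(x,\e^{-1}x).
\]

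For the last piece, Lemma \ref{l2-2} gives
\[
(\lambda-\sL_\e)\bigl(\varphi(\e)^{-1}\Psi^\e\bigr)(x)=F_1^\e(x,\e^{-1}x)-\bar F_1^\e(x)-G_1^\e(x,\e^{-1}x),
\]
where the two $\lambda\varphi(\e)^{-1}\phi_1^\e$ terms cancel. Subtracting this from the previous identity yields \eqref{l2-1-1}.

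The main obstacle is the bookkeeping in the third step. The signs must line up so that $-\sL_\e w$ produces $+\e\sum\Gamma_\e$ and $+\e\langle\cdots,\phi_0^\e\rangle$ with the same signs as in \eqref{l2-1-3a}, and the $\lambda$ term must enter as $-\lambda\e\langle\nabla\bar u,\phi_0^\e\rangle$. If the signs come out opposite to the definition, we would recheck the sign convention in \eqref{l2-1-3a}.

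The product rule also needs a justification. It requires care with the principal value and the compensator $z\I_{\{|z|\le1\}}$ in $\delta$. We would handle it by writing
\[
f(x+z)g(y+z')-f(x)g(y)=(f(x+z)-f(x))g(y)+f(x)(g(y+z')-g(y))+(f(x+z)-f(x))(g(y+z')-g(y)),
\]
integrating over $\{|z|>\eta\}$, and letting $\eta\to0$. Each limit exists because $\bar u\in\M_{\beta_0}$ is smooth and $\phi_0^\e\in C^1$, and the cross term is absolutely integrable by the estimate of Lemma \ref{l1-3}. We also need $w$ and $\Psi^\e$ to lie in the domain of $\sL_\e$, so that the pointwise identities are legitimate; this follows from the same regularity together with $\phi_0^\e\in\D(\sL)$.
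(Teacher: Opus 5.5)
Your proposal is correct and follows the paper's own computation. It decomposes $\sL_\e$ via \eqref{e2-1} applied to $\bar u$ and $\nabla\bar u$, uses the product rule \eqref{e:add-} together with the scaling identity $\sL_\e\phi_0^\e(\e^{-1}\cdot)=-\varphi(\e)\Phi_\e(\e^{-1}\cdot)$, and closes with Lemma \ref{l2-2}.

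Your sign worries are unfounded, though two phrases in your write-up have the sign wrong. Since $w$ enters $v_1^\e$ with a minus sign, its contribution is $-\lambda w+\sL_\e w$. So the multiplier in your drift cancellation is $+\e f_i$, not $-\e f_i$, and it is $+\sL_\e w$, not $-\sL_\e w$, that produces $+\e\sum_i\Gamma_\e$. With that correction every term matches \eqref{l2-1-3a} exactly.
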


\begin{proof}
According to \eqref{e2-1}, we
have
\begin{equation}\label{e:add-}
\begin{split}
&\sL_\e\left(\e\left\langle \nabla \bar u(\cdot), \phi_0^\e
\right(\e^{-1} \cdot )
\rangle\right)(x)\\
&=\e\left\langle \sL_\e \nabla \bar u(x), \phi_0^\e\left(\e^{-1} x \right)\right\rangle+
\e\left\langle \nabla \bar u(x), \sL_\e\phi_0^\e
(\e^{-1} \cdot )(x)
 \right\rangle +\sum_{i=1}^d\e\Gamma_\e\left(\partial_{x_i}\bar u,\phi_{0,i}^\e\right)\left(x,\e^{-1} x \right)\\
&=\e\left\langle \bar \sL_\alpha \nabla \bar u(x)+\sum_{i=1}^2\Lambda_\e^i \nabla \bar u
\left(x,\e^{-1} x \right)+\Theta_\e^1 \nabla \bar u
\left(x,\e^{-1} x \right), \phi_0^\e\left(\e^{-1} x \right)\right\rangle\\
&\quad +\e^{2}\varphi(\e)\left\langle
\nabla^2 \bar u(x), \phi_0^\e\left(\e^{-1} x \right)\otimes \Phi_\e\left(\e^{-1} x \right)\right\rangle\\
&\quad -
\e\varphi(\e)\left\langle
\nabla \bar u(x),
\Phi_\e\left(\e^{-1} x \right)\right\rangle
+\sum_{i=1}^d\e\Gamma_\e\left(\partial_{x_i}\bar u,\phi_{0,i}^\e\right)\left(x,\e^{-1} x \right),
\end{split}
\end{equation}
where the last equality follows from
\begin{equation*}
\sL_\e\phi_0^\e
(\e^{-1} \cdot )(x)
=\varphi(\e)\sL \phi_0^\e(\cdot)\left(\e^{-1}x\right)
= -\varphi(\e)\Phi_\e\left(\e^{-1} x \right)
\end{equation*}
that can be verified directly by \eqref{e2-2} and the
change of variables.

Therefore, combining this with \eqref{e2-1}, \eqref{e1-3-}, \eqref{e1-3} and  \eqref{l2-2-1},
we find that
\begin{align*}
&\lambda v_1^\e(x)-\sL_\e v_1^\e(x)\\
&=\left(\lambda u_\e(x)-\sL_\e u_\e(x)\right)-
\left(\lambda \bar u(x)-\bar \sL_\alpha \bar u(x) \right)
+\sum_{i=1}^2\Lambda_\e^i \bar u\left(x,\e^{-1} x \right)+\Theta_\e^1 \bar u\left(x,\e^{-1} x \right)+\e\varphi(\e)\left\langle\nabla \bar u(x), \Phi_\e\left(\e^{-1} x \right)\right\rangle\\
&\quad-
\lambda\e
\left\langle \nabla \bar u(x), \phi_0^\e\left(\e^{-1} x \right)\right\rangle
+\e\left\langle \bar \sL_\alpha \nabla \bar u(x)+\sum_{i=1}^2\Lambda_\e^i \nabla \bar u
\left(x,\e^{-1} x \right)+\Theta_\e^1 \nabla \bar u
\left(x,\e^{-1} x \right), \phi_0^\e\left(\e^{-1} x \right)\right\rangle\\
&\quad +\e^{2}\varphi(\e)\left\langle
\nabla^2 \bar u(x), \phi_0^\e\left(\e^{-1} x \right)\otimes \Phi_\e\left(\e^{-1} x \right)\right\rangle-
\e\varphi(\e)\left\langle \nabla \bar u(x), \Phi_\e\left(\e^{-1} x \right)\right\rangle+
\sum_{i=1}^d\e\Gamma_\e\left(\partial_{x_i}\bar u,\phi_{0,i}^\e\right)\left(x,\e^{-1} x \right)\\
&\quad -\lambda \varphi(\e)^{-1}\phi_1^\e\left(x,\e^{-1} x \right)+\varphi(\e)^{-1}\sL_\e\phi_1^\e\left(\cdot,
\e^{-1} \cdot
\right)(x)\\
&=F_1^\e\left(x, \e^{-1}x\right)-\lambda \varphi(\e)^{-1}\phi_1^\e\left(x,\e^{-1} x \right)+\varphi(\e)^{-1}\sL_\e\phi_1^\e\left(\cdot,
\e^{-1} \cdot
\right)(x)\\
&=G_1^\e\left(x,\e^{-1} x \right)+\bar F_1^\e(x).
\end{align*}
The proof is complete.
\end{proof}

Now, it is a position to present the
\begin{proof}[Proof of Theorem $\ref{thm1}$] The proof is split into two steps.

{\bf Step 1}\,\, Note that  $\int_{\T^d}\Theta_\e^1\bar u(x,y)\,dy=0$. Thus, according to the argument for \eqref{l1-6-2}
(to obtain estimates for corresponding terms except $\Theta_\e^1 \bar u$),
\begin{equation}\label{e2-9a}
 |\nabla^k \bar F_1^\e(x)|
\le c_1\left(\sum_{i=k}^{k+2}\|\bar u\|_{\M_{\beta},i}\right)\xi_{1,\beta_0}(\e)(1+|x|)^{-d-\beta_0},\quad \e\in (0,1),\ x\in \R^d,\ k\ge 0.
\end{equation}
By \eqref{e2-11} and \eqref{l2-2-3},
\begin{equation}\label{p2-1-1a}
\begin{split}
|\nabla^k \bar G_1^\e(x)|&\le c_2\sum_{i=k}^{k+2}\left(\sup_{y\in \T^d}\|\phi_1^\e(\cdot,y)\|_{\M_{\beta_0},i}
+\sup_{y\in \T^d}\|\nabla_y\phi_1^\e(\cdot,y)\|_{\M_{\beta_0},i}\right)\xi_{2,\beta_0}(\e)(1+|x|)^{-d-\beta_0},\\
&\le c_3(k)\xi_{2,\beta_0}(\e)(1+|x|)^{-d-\beta_0}, \quad \e\in (0,1),\ x\in \R^d,\ k\ge 0.
\end{split}
\end{equation}

According to \eqref{e2-13} and Lemma \ref{l1-1}, for every $\e\in (0,1)$, $n\ge 1$ and $k\ge 0$,
\begin{equation}\label{p2-1-2}
\|\psi_{n+1}^\e\|_{\M_{\beta_0},k}\le c_3(k)\left(\|\bar F_n^\e\|_{\M_{\beta_0},k}+\|\bar G_n^\e\|_{\M_{\beta_0},k}\right).
\end{equation}
Furthermore, by the arguments for \eqref{e2-4} and \eqref{e2-9a}, we can derive that for every $n\ge 1$,
\begin{equation}\label{p2-1-5}
\begin{split}
\sup_{y\in \T^d}\|F_{n+1}^\e(\cdot,y)\|_{\M_{\beta_0},k}
&\le c_4(k)\sum_{i=k}^{k+2}\|\psi_{n+1}^\e\|_{\M_{\beta_0},i}\\
&\le c_5(k)\sum_{i=k}^{k+2}\left(\|\bar F_n^\e\|_{\M_{\beta_0},i}+\|\bar G_n^\e\|_{\M_{\beta_0},i}\right)
\end{split}
\end{equation} and
\begin{equation}\label{p2-1-5a}
\begin{split}
\|\bar F_{n+1}^\e\|_{\M_{\beta_0},k}
&\le c_4(k)\xi_{1,\beta_0}(\e)\sum_{i=k}^{k+2}\|\psi_{n+1}^\e\|_{\M_{\beta_0},i}\\
&\le c_5(k)\xi_{1,\beta_0}(\e)\sum_{i=k}^{k+2}\left(\|\bar F_n^\e\|_{\M_{\beta_0},i}+\|\bar G_n^\e\|_{\M_{\beta_0},i}\right).
\end{split}
\end{equation}
In particular, as mentioned above, the leading order in
\eqref{p2-1-5} comes from the term $\Theta_\e^1 \psi_{n+1}^\e\left(x,y\right)$. However, since
$\displaystyle\int_{\T^d}\Theta_\e^1 \psi_{n+1}^\e(x,y)\,dy=0$, for the estimate \eqref{p2-1-5a} this term vanishes.

Note that by \eqref{e2-14}, for every $n\ge 1$, $k\ge0$, $x\in \R^d$ and $y\in \T^d$,
$$
-\sL \nabla_x^k \phi_{n+1}^\e(x,\cdot)(y)=\nabla_x^k G_n(x,y)-\nabla_x^k \bar G_n(x)+\nabla^k_x  F_{n+1}^\e(x,y)-\nabla_x^k \bar F_{n+1}^\e(x).
$$
This along with  Assumption \ref{a1-2} yields that for every $\e\in (0,1)$ and $k\ge 0$,
\begin{align*}
&\sup_{y\in \T^d}|\nabla_x^k\phi_{n+1}^\e(x,y)|+\sup_{y\in \T^d}|\nabla_y\nabla_x^k \phi_{n+1}^\e(x,y)|\\
&\le
c_6(k)\left(\sup_{y\in \T^d}\|G_n^\e(\cdot,y)\|_{\M_{\beta_0},k}+
\|\bar G_n^\e\|_{\M_{\beta_0},k}+
\sup_{y\in \T^d}\|F_{n+1}^\e(\cdot,y)\|_{\M_{\beta_0},k}+
\|\bar F_{n+1}^\e\|_{\M_{\beta_0},k}\right)(1+|x|)^{-d-\beta_0}\\
&\le c_7(k)\left(\sup_{y\in \T^d}\|G_n^\e(\cdot,y)\|_{\M_{\beta_0},k}+\|\bar G_n^\e\|_{\M_{\beta_0},k}
+\|\bar F_{n}^\e\|_{\M_{\beta_0},k}\right)(1+|x|)^{-d-\beta_0},
\end{align*}
where the last inequality follows from \eqref{p2-1-5} and \eqref{p2-1-5a}.
Thus, applying the estimate above and following the argument for \eqref{l2-2-3}, we have for all $\e\in (0,1)$,
\begin{equation}\label{p2-1-3a}
\begin{split}
&\sup_{y\in \T^d}\|G_{n+1}^\e(\cdot,y)\|_{\M_{\beta_0},k}+\|\bar G_{n+1}^\e\|_{\M_{\beta_0},k}\\
&\le c_7(k)\xi_{2,\beta_0}(\e)\sum_{i=k}^{k+2}\left(\sup_{y\in \T^d}
\|\phi_n^\e(\cdot,y)\|_{\M_{\beta_0},i}
+\sup_{y\in \T^d}\|\nabla_y\phi_n^\e(\cdot,y)\|_{\M_{\beta_0},i}\right)\\
&\le c_8(k)\xi_{2,\beta_0}(\e)\sum_{i=k}^{k+2}\left(\sup_{y\in \T^d}\|G_n^\e(\cdot,y)\|_{\M_{\beta_0},i}
+\|\bar G_n^\e\|_{\M_{\beta_0},i}
+\|\bar F_{n}^\e\|_{\M_{\beta_0},i}\right).
\end{split}
\end{equation}

Using the inductive inequalities  \eqref{p2-1-5}, \eqref{p2-1-5a} and \eqref{p2-1-3a}
for $\|G_{n}^\e(\cdot,y)\|_{\M_{\beta_0},k}$, $\|F_{n}^\e(\cdot,y)\|_{\M_{\beta_0},k}$,
$\|\bar G_{n}^\e(\cdot)\|_{\M_{\beta_0},k}$ and $\|\bar F_{n}^\e\|_{\M_{\beta_0},k}$, as well as
the initial estimates \eqref{e2-4}, \eqref{e2-9a}, \eqref{l2-2-3} and \eqref{p2-1-1a},
we can obtain that for every $k\ge 0$, $n\ge 1$,
\begin{equation}\label{p2-1-4}
\begin{split}
 \sup_{y\in \T^d}\|G_{n}^\e(\cdot,y)\|_{\M_{\beta_0},k}+
\|\bar G_{n}^\e\|_{\M_{\beta_0},k}&\le
c_9(k,n)
\xi_{2,\beta_0}(\e)\left(\xi_{1,\beta_0}(\e)+\xi_{2,\beta_0}(\e)\right)^{n-1},\\
 \sup_{y\in \T^d}\|F_{n}^\e(\cdot,y)\|_{\M_{\beta_0},k}&\le c_9(k,n)
\left(\xi_{1,\beta_0}(\e)+\xi_{2,\beta_0}(\e)\right)^{n-1},\\
 \|\bar F_{n}^\e\|_{\M_{\beta_0},k}&\le c_9(k,n)\xi_{1,\beta_0}(\e)\left(\xi_{1,\beta_0}(\e)+\xi_{2,\beta_0}(\e)\right)^{n-1}.
\end{split}
\end{equation}

{\bf Step 2}\,\,
Set
\begin{align*}
\tilde v_n^\e(x):=v_{n+1}^\e(x)-v_n^\e(x)=-\psi_{n+1}^\e(x)-\e\left\langle \nabla \psi_{n+1}^\e(x), \phi_0^\e\left(\e^{-1} x \right)\right\rangle-
\varphi(\e)^{-1}\phi_{n+1}^\e\left(x,\e^{-1} x \right).
\end{align*}
Following the arguments for \eqref{l2-2-1} and using \eqref{e2-14}, we have
\begin{equation}\label{e:add--}\begin{split}
&\lambda \varphi(\e)^{-1}\phi_{n+1}^\e\left(x,\e^{-1} x \right)-\varphi(\e)^{-1}\sL_\e \phi_{n+1}^\e
(\cdot, \e^{-1} \cdot )(x) \\
&=G_n^\e\left(x,\e^{-1} x \right)-\bar G_n^\e(x)+F_{n+1}^\e\left(x,\e^{-1} x \right)-\bar F_{n+1}^\e(x)
-G_{n+1}^\e\left(x,\e^{-1} x \right).\end{split}
\end{equation}
Then, according to the equality above, \eqref{e2-13},
\eqref{e2-1} and the argument for \eqref{e:add-},
we can get that for every $n\ge 1$,
\begin{align*}
  \lambda \tilde v_n^\e(x)-\sL_\e \tilde v_n^\e(x)
&=-\bar F_{n}^\e(x)-\bar G_n^\e(x)
+\sum_{i=1}^2\Lambda_\e^i \psi_{n+1}^\e\left(x,\e^{-1} x \right)+\Theta_\e^1 \psi_{n+1}^\e\left(x,\e^{-1} x \right)
+\e\varphi(\e)\left\langle \nabla\psi_{n+1}^\e(x), \Phi_\e\left(\e^{-1} x \right)\right\rangle\\
&\quad+\e\left\langle \bar \sL_\alpha(\nabla \psi_{n+1}^\e)(x)+
\sum_{i=1}^2\Lambda_\e^i(\nabla \psi_{n+1}^\e)\left(x,\e^{-1} x \right)
+\Theta_\e^1(\nabla \psi_{n+1}^\e)\left(x,\e^{-1} x \right), \phi_0^\e\left(\e^{-1} x \right)\right\rangle\\
&\quad+\e^{2}\varphi(\e)\left\langle \nabla^2 \psi_{n+1}^\e(x), \phi_0^\e\left(\e^{-1} x \right)\otimes\Phi_\e\left(\e^{-1} x \right)\right\rangle
+\e\sum_{i=1}^d \Gamma_\e\left(\partial_{x_i}\psi_{n+1}^\e, \phi_{0,i}^\e\right)\left(x,\e^{-1} x \right)\\
&\quad -\e\varphi(\e)\left\langle \nabla\psi_{n+1}^\e(x), \Phi_\e\left(\e^{-1} x \right)\right\rangle
-\lambda\e\left\langle \nabla \psi_{n+1}^\e(x), \phi_0^\e\left(\e^{-1} x \right)\right\rangle \\
&\quad -G_n^\e\left(x,\e^{-1} x \right)+\bar G_n^\e(x)-F_{n+1}^\e\left(x,\e^{-1} x \right)+\bar F_{n+1}^\e(x)
+G_{n+1}^\e\left(x,\e^{-1} x \right)\\
&=-\bar F_{n}^\e(x)-G_{n}^\e\left(x,\e^{-1} x \right)+G_{n+1}^\e\left(x,\e^{-1} x \right)+\bar F_{n+1}^\e(x).
\end{align*}
Combining this induction property with the initial estimate \eqref{l2-1-1} yields that for every $n\ge 1$ and $x\in \R^d$,
\begin{equation}\label{t1-1-3}
\lambda v_n^\e(x)-\sL_\e v_n^\e(x)=\bar F_{n}^\e(x)+G_{n}^\e\left(x,\e^{-1} x \right).
\end{equation}
This together with \eqref{e1-2a} and  \eqref{p2-1-4} in turn
gives
that for every $n\ge 1$,
$$
\|v_n^\e\|_{L^2(\R^d;dx)}\le \lambda^{-1}\left\|\bar F_{n}^\e(\cdot)+G_{n}^\e
(\cdot, \e^{-1} \cdot )
\right\|_{L^2(\R^d;dx)}
\le c_1\left(\xi_{1,\beta_0}(\e)+\xi_{2,\beta_0}(\e)\right)^n.
$$
So the proof is finished.
\end{proof}

\subsection{Critical regime}
In this subsection, we give the
\begin{proof}[Proof of Theorem $\ref{thm2}$]
For every $f\in \M_\beta$ and $x\in \R^d$,
\begin{equation}\label{e3-7}
\begin{split}
\sL_\e f(x)&= \varphi(\e)\e^{-d}\int_{\R^d}\delta f(x;z)K\left(\e^{-1} x ,
\e^{-1} (x+z)
\right)j \left(\e^{-1} z \right)dz+
\e\varphi(\e)\left\langle \nabla f(x), \Phi_\e\left(\e^{-1} x \right)\right\rangle\\
&=\bar \sL_2 f(x)+\sum_{i=3}^4\Lambda_\e^i f\left(x,\e^{-1} x \right)+\Theta_\e^2 f\left(x,\e^{-1} x \right)+\e\varphi(\e)\left\langle \nabla f(x), \Phi_\e\left(\e^{-1} x \right)\right\rangle,
\end{split}
\end{equation}
where $\Lambda_\e^3 f$, $\Lambda_\e^4$, $\Theta_\e^2 f$ and $\Phi_\e:\T^d \to \R^d$ are defined by \eqref{e3-3a}
and \eqref{e2-1a} respectively.

Note that \eqref{e2-2a} still holds true.
Similarly to \eqref{e2-8a}, we have,
for all $k\ge0$, $i=3,4$, $x\in \R^d$ and $y\in \T^d$,
$$\nabla^k\Lambda_\e^i f(\cdot,y)(x)=\Lambda_\e^i\left(\nabla^k f\right)(x,y),\,\,
\nabla^k\Theta_\e^2 f(\cdot,y)(x)=\Theta_\e^2\left(\nabla^k f\right)(x,y),\,\,
 \nabla^k\Gamma_\e(f,\phi)(\cdot,y)(x)=\Gamma_\e\left(\nabla^k f,\phi\right)(x,y).
$$
Using these and
Lemma \ref{l3-4} as well as its proof, we can obtain
\begin{equation}\label{e3-9}
\begin{split}
&\sup_{y\in \T^d}\|F_1^\e(\cdot,y)\|_{\M_{\beta},k}\le c_1\left(\sum_{i=k}^{k+3}\|\bar u\|_{\M_{\beta},i}\right)(1+\xi_{3,\beta}(\e))\le c_2(k),\\
&\|\bar F_1^\e\|_{\M_{\beta},k}\le c_1\left(\sum_{i=k}^{k+3}\|\bar u\|_{\M_{\beta},i}\right)\xi_{3,\beta}(\e)\le c_2(k)\xi_{3,\beta}(\e).
\end{split}
\end{equation}
Here, as explained before, the dominated estimate of $F_1^\e(x,y)$ is from the term
$\Theta_\e^2 \bar u(x,y)$, which vanishes in the estimates of $\bar F_1^\e(x)$.

Furthermore, according to the proof of \eqref{l2-2-3}, we know that it still holds for the critical regime.
Hence, applying the arguments for \eqref{p2-1-5}, \eqref{p2-1-5a}
and \eqref{p2-1-3a},
 and using \eqref{l3-4-1}, \eqref{l2-2-3} and Lemma \ref{l3-1},
we can obtain
for every $n\ge 1$
\begin{align*}
\sup_{y\in \T^d}\|F_{n+1}^\e(\cdot,y)\|_{\M_{\beta},k}
&\le c_3(k,n)\sum_{i=k}^{k+3}\left(\|\bar F_n^\e\|_{\M_{\beta},i}+\|\bar G_n^\e\|_{\M_{\beta},i}\right),\\
\|\bar F_{n+1}^\e\|_{\M_{\beta},k}
&\le
c_3(k,n)
\xi_{3,\beta}(\e)\sum_{i=k}^{k+3}\left(\|\bar F_n^\e\|_{\M_{\beta},i}+\|\bar G_n^\e\|_{\M_{\beta},i}\right),\\
\sup_{y\in \T^d}\|G_{n+1}^\e(\cdot,y)\|_{\M_{\beta},k}+\|\bar G_{n+1}^\e\|_{\M_{\beta},k}&\le c_3(k,n)\xi_{2,\beta}(\e)\sum_{i=k}^{k+2}\left(\sup_{y\in \T^d}\|G_n^\e(\cdot,y)\|_{\M_{\beta},i}
+\|\bar G_n^\e\|_{\M_{\beta},i}
+\|\bar F_{n}^\e\|_{\M_{\beta},i}\right).
\end{align*}
Using
these inductive inequalities and the initial estimate \eqref{e3-9}, we can show that
\begin{equation}\label{t1-2-3}
\begin{split}
 \sup_{y\in \T^d}\|G_{n}^\e(\cdot,y)\|_{\M_{\beta},k}+
 \|\bar G_{n}^\e\|_{\M_{\beta},k}&\le c_4(k,n)
\xi_{2,\beta}(\e)\left(\xi_{2,\beta}(\e)+\xi_{3,\beta}(\e)\right)^{n-1},\\
\sup_{y\in \T^d}\|F_{n}^\e(\cdot,y)\|_{\M_{\beta},k}&\le c_4(k,n)
\left(\xi_{2,\beta}(\e)+\xi_{3,\beta}(\e)\right)^{n-1},\\
 \|\bar F_{n}^\e\|_{\M_{\beta},k}&\le c_4(k,n)\xi_{3,\beta}(\e)\left(\xi_{2,\beta}(\e)+\xi_{3,\beta}(\e)\right)^{n-1}.
\end{split}
\end{equation}

Meanwhile, following the same arguments for \eqref{t1-1-3}, and using \eqref{e3-7} and \eqref{l2-2-1}
(similar to \eqref{l2-2-3},
\eqref{l2-2-1} still holds for the critical case), we can obtain that for every $n\ge 1$ and $x\in\R^d$,
$$
\lambda v_n^\e(x)-\sL_\e v_n^\e(x)=\bar F_{n}^\e(x)+G_{n}^\e\left(x,\e^{-1} x \right).
$$
This, along with  \eqref{e1-2a} and  \eqref{t1-2-3}, yields the desired conclusion \eqref{t1-2-2}.
\end{proof}

\subsection{Supercritical diffusive regime}

This part is devoted to the
\begin{proof}[Proof of Theorem $\ref{thm3}$]
Applying the fact
which is similar to \eqref{e2-8a}
that for all $k\ge0$, $x\in \R^d$, $y\in \T^d$ and $i=5,6,7$,
$$\nabla^k\Lambda_\e^i f(\cdot,y)(x)=\Lambda_\e^i\left(\nabla^k f\right)(x,y),\quad
\nabla^k\Theta_\e^3 f(\cdot,y)(x)=\Theta_\e^2\left(\nabla^k f\right)(x,y),$$
as well as the estimates \eqref{l4-1-1}, \eqref{l4-2-1}, \eqref{l4-3-1} and \eqref{l4-4-1}, we derive that
\begin{equation}\label{t1-3-4}
\begin{split}
&\sup_{y\in \T^d}\|F_1^\e(\cdot,y)\|_{\M_{\beta},k}\le c_1\left(\sum_{k=0}^3\|\bar u\|_{\M_\beta,k}\right)\left(1+\xi_{4,\beta}(\e)\right)\le c_2(k),\\
&\|\bar F_1^\e\|_{\M_{\beta},k}\le c_1\left(\sum_{k=0}^3\|\bar u\|_{\M_\beta,k}\right)\xi_{4,\beta}(\e)\le c_2(k)\xi_{4,\beta}(\e).
\end{split}
\end{equation}
Here as explained before, the dominated estimate of $F_1^\e(x,y)$  comes from the term
$\Theta_\e^3 \bar u(x,y)$, which varnishes when considering the estimate for $\bar F_1^\e(x,y)$
since
$\int_{\T^d}\Theta_\e^3 \bar u(x,y)\,dy=0$. Indeed, this fact is due to the following property
\begin{align*}
 \int_{\T^d}\bar A_0(y)\,dy
&=\int_{\T^d}\left(\int_{\R^d}\Big(z\otimes z+2 z\otimes \left(\phi_0(y+z)-\phi_0(y)\right)
\Big)K(y,y+z)j(z)\,dz\right)dy+2\int_{\T^d}\Phi_0(y)\otimes \phi_0(y)\,dy\\
&=\int_{\T^d}\left(\int_{\R^d}\Big(z\otimes z+2 z\otimes \left(\phi_0(y+z)-\phi_0(y)\right)
\Big)K(y,y+z)j(z)\,dz\right)\,dy-2\int_{\T^d}\sL \phi_0(y)\otimes \phi_0(y)\,dy\\
&=\int_{\T^d}\left(\int_{\R^d}\left(z+\phi_0(y+z)-\phi_0(y)\right)\otimes \left(z+\phi_0(y+z)-\phi_0(y)\right)K(y,y+z)
j(z)\,dz\right)\,dy=\bar A_0,
\end{align*}
where the second equality is due to \eqref{e2-2}, and the third one follows from
\begin{align*}
-2\int_{\T^d}\sL \phi_0(y)\otimes \phi_0(y)\,dy=
\int_{\T^d}\int_{\R^d}(\phi_0(y+z)-\phi_0(y))\otimes(\phi_0(y+z)-\phi_0(y))
K(y,y+z)\,dz\,dy
\end{align*}

Similar to the critical case,
\eqref{l2-2-3} still holds true in the present setting.
Hence, repeating the procedure of the proofs for \eqref{p2-1-5}, \eqref{p2-1-5a}
and \eqref{p2-1-3a}, and using \eqref{l4-4-1}, \eqref{l2-2-3} (by the proof of \eqref{l2-2-3}, we know it still holds for diffusive regime)
 and Lemma \ref{l3-1} (according to Remark \ref{r3-1a}, the conclusion of Lemma \ref{l3-1} still holds for diffusive regime),
 we can get that
for every $n\ge 1$
\begin{align*}
\sup_{y\in \T^d}\|F_{n+1}^\e(\cdot,y)\|_{\M_{\beta},k}
&\le c_3(k,n)\sum_{i=k}^{k+3}\left(\|\bar F_n^\e\|_{\M_{\beta},i}+\|\bar G_n^\e\|_{\M_{\beta},i}\right),\\
\|\bar F_{n+1}^\e\|_{\M_{\beta},k}
&\le c_3(k,n)\xi_{4,\beta}(\e)\sum_{i=k}^{k+3}\left(\|\bar F_n^\e\|_{\M_{\beta},i}+\|\bar G_n^\e\|_{\M_{\beta},i}\right),\\
 \sup_{y\in \T^d}\|G_{n+1}^\e(\cdot,y)\|_{\M_{\beta},k}+\|\bar G_{n+1}^\e\|_{\M_{\beta},k}
&\le c_3(k,n)\xi_{2,\beta}(\e)\sum_{i=k}^{k+2}\left(\sup_{y\in \T^d}\|G_n^\e(\cdot,y)\|_{\M_{\beta},i}
+\|\bar G_n^\e\|_{\M_{\beta},i}
+\|\bar F_{n}^\e\|_{\M_{\beta},i}\right).
\end{align*}
By these inductive inequalities and the initial estimate \eqref{t1-3-4}, we can obtain
\begin{equation}\label{t1-3-6}
\begin{split}
\sup_{y\in \T^d}\|G_{n}^\e(\cdot,y)\|_{\M_{\beta},k}+
\|\bar G_{n}^\e\|_{\M_{\beta},k}&\le c_4(k,n)
\xi_{2,\beta}(\e)\left(\xi_{2,\beta}(\e)+\xi_{4,\beta}(\e)\right)^{n-1},\\
\sup_{y\in \T^d}\|F_{n}^\e(\cdot,y)\|_{\M_{\beta},k}&\le c_4(k,n)
\left(\xi_{2,\beta}(\e)+\xi_{4,\beta}(\e)\right)^{n-1},\\
\|\bar F_{n}^\e\|_{\M_{\beta},k}&\le c_4(k,n)\xi_{4,\beta}(\e)\left(\xi_{2,\beta}(\e)+\xi_{4,\beta}(\e)\right)^{n-1}.\\
\end{split}
\end{equation}

Furthermore, for every $f\in \M_\beta$, it holds that
\begin{equation}\label{t1-3-7}
\begin{split}
& \sL_\e f(x)+\e\sL_\e \left\langle (\nabla f)(\cdot),\phi_0\left(
\e^{-1} \cdot
\right)\right\rangle(x)\\
&= \e^{-2}\int_{\R^d}\hat \delta f(x;\e z)K\left(\e^{-1} x ,\e^{-1} x +z\right)j \left(z\right)\,dz+
\e^{-1}\left\langle \nabla f(x), \Phi_0\left(\e^{-1} x \right)\right\rangle\\
&\quad+\left\langle \hbox{p.v.}
\int_{\R^d}\left(\nabla f(x+\e z)-\nabla f(x)\right)K\left(\e^{-1} x ,\e^{-1} x +z\right)j(z)\,dz, \phi_0\left(\e^{-1} x \right)\right\rangle+
\e \left\langle (\nabla f)(x),\sL_\e\phi_0\left(
\e^{-1} \cdot
\right)(x)\right\rangle \\
&\quad +
\e\sum_{i=1}^d \Gamma_\e\left(\partial_{x_i}f,\phi_{0,i}\right)\left(x,\e^{-1} x \right)\\
&=\bar \sL_{>2} f(x)+\sum_{i=5}^7\Lambda_\e^i f\left(x,\e^{-1} x \right)+\Theta_\e^3 f\left(x,\e^{-1} x \right),
\end{split}
\end{equation}
where in the last equality $\Lambda_\e^5 f$, $\Lambda_\e^6 f$, $\Lambda_\e^7 f$ and $\Theta_\e^3 f$ are defined in \eqref{e2-3-4}, $\Phi_0$ is defined in \eqref{e2-1a}, and we used the fact that $$\sL_\e\phi_0\left(
\e^{-1} \cdot
\right)(x)= -\e^{-2}\Phi\left(\e^{-1} x \right).$$
Hence, following the proof of \eqref{t1-1-3}, using \eqref{l2-2-1} and
\eqref{t1-3-7} (noting that \eqref{l2-2-1} holds for the supercritical diffusive regime),
we obtain that for every $n\ge 1$,
\begin{equation*}\label{t1-2-4}
\lambda v_n^\e(x)-\sL_\e v_n^\e(x)=\bar F_{n}^\e(x)+G_{n}^\e\left(x,\e^{-1} x \right),\ \ x\in \R^d.
\end{equation*}
This, along with \eqref{e1-2a} and \eqref{t1-3-6}, yields the desired conclusion \eqref{t1-3-2}.
\end{proof}

\subsection{Proofs of Examples}

\begin{proof}[Proof of Example $\ref{ex1-1}$]
(1) We firstly suppose that
$j(z)=\frac{1}{|z|^{d+\alpha}}$ for all $z\in \R^d$ with $\alpha\in (0,2)$. Then, Assumption \ref{a1-2} holds
when $\alpha\in (1,2)$ and Assumption (H) in Remark \ref{r1-2} holds when $\alpha\in (0,1]$.
In this case, take $\varphi(\e)=\e^{-\alpha}$ in \eqref{a1-1-1}. We can see that $\Pi_\e(z)\equiv 0$ with $\Pi_\e$ being defined by \eqref{e1-7a},
which implies
$\gamma_{1,\beta}(\e)\equiv 0$ for every $\beta>0$.
When $\alpha\in (1,2)$, by choosing $\beta=\alpha$, we have
$
\gamma_2(\e)\le c_1\e^{2-\alpha}$, $\gamma_{3,\alpha}(\e)\le c_1\e$ and $\gamma_4(\e)\le c_1.
$
Thus,
$
\xi_{1,\alpha}(\e)\le c_2\e^{2-\alpha}$ and $\xi_{2,\alpha}(\e)\le c_2\e.$
By this and \eqref{p2-1-1}, we can show the desired assertion for $\alpha\in (1,2)$.
For $\alpha=1$, \eqref{r1-1-1} holds with $\theta>0$. We still take $\beta=\alpha$, and then we can get
$
\tilde \gamma_2(\e)+\gamma_2(\e)\le c_3\e|\log \e|$, $\gamma_{3,\alpha}(\e)\le c_3\e$ and $\gamma_4(\e)\le c_3|\log \e|.
$
Thus,
$
\xi_{1,\alpha}(\e)\le c_4\e|\log \e|$ and $\tilde \xi_{2,\alpha}(\e)\le c_4\e|\log \e|,
$
which along with \eqref{r1-2-1} implies the desired assertion for $\alpha=1$.
For $\alpha\in (0,1)$, \eqref{r1-1-1} holds with $\theta=0$. Taking $\beta=\alpha$, it holds that
$
\tilde \gamma_2(\e)+\gamma_2(\e)\le c_5\e,$ $\gamma_{3,\alpha}(\e)\le c_5\e$ and $\gamma_4(\e)\le c_5\e^{\alpha-1}.
$
Then,
$
\xi_{1,\alpha}(\e)\le c_6\e^\alpha$ and $\tilde \xi_{2,\alpha}(\e)\le c_6\e^\alpha.
$
This along with \eqref{r1-2-1} yields
the desired assertion.

Next, we consider the case that $j(z)=\frac{1}{|z|^{d+\alpha_0}}\I_{\{|z|\le 2\}}+\frac{1}{|z|^{d+\alpha}}\I_{\{|z|> 2\}}$.
Then, by
direct computations,
\begin{align*}
\Pi_\e(z)\le \left(\e^{\alpha_0-\alpha}\frac{1}{|z|^{d+\alpha_0}}+\frac{1}{|z|^{d+\alpha}}\right)\I_{\{|z|\le 2\e\}}.
\end{align*}
This implies that $\gamma_{1,\alpha}(\e)\le c_7\e^{2-\alpha}$.
By the above argument, we see
that the same estimates hold for
$\gamma_2(\e)$-$\gamma_4(\e)$, so
\eqref{ex1-1-1b} is still true.

(2) This is a special case of Example $\ref{ex1-2}$(2) (namely when $m=0$), so the
proof is given below.

(3) With the form of $j(z)$ given here, one can see that, for any $\beta\in (0,2]$,
$$
\gamma_2(\e)\le c_7,\,\,\gamma_{3,\beta}(\e)\le c_7\e^{\alpha-1},\,\, \gamma_4(\e)\le c_7,\,\, \gamma_9(\e)\le c_7\max\{\e,\e^{\alpha-2}\}.
$$
This together with \eqref{t1-3-2} yields the desired assertion.
\end{proof}

\begin{proof}[Proof of Example $\ref{ex1-2}$]
(1) We will only consider the case that $
j(z)= \frac{(\log |z|)^m}{|z|^{d+\alpha}}
$ for all $z\in \R^d$ with $|z|\ge2$ and $m\ge0$, since the other one can be treated similarly.
It is easy to see that \eqref{a1-1-1} holds with $\varphi(\e)=\e^{-\alpha}|\log \e|^{-m}$. Then,
\begin{align*}
\Pi_\e(z)&\le \left(\e^{\alpha_0-\alpha}|\log \e|^{-m}\frac{1}{|z|^{d+\alpha_0}}+\frac{1}{|z|^{d+\alpha}}\right)\I_{\{|z|\le 2\e\}}
+\frac{1}{|z|^{d+\alpha}}\left|\left(\frac{\log |z|}{|\log \e|}+1\right)^m-1\right|\I_{\{|z|>2\e\}}\\
&\le \left(\e^{\alpha_0-\alpha}|\log \e|^{-m}\frac{1}{|z|^{d+\alpha_0}}+\frac{1}{|z|^{d+\alpha}}\right)\I_{\{|z|\le 2\e\}}
+\frac{c_1m }{|z|^{d+\alpha}}\frac{|\log |z||}{|\log \e|}\I_{\{2\e<|z|\le 2\e^{-1}\}}\\
&\quad +\frac{c_1}{|z|^{d+\alpha}}\left(\frac{  \log |z| }{|\log \e|}\right)^m \I_{\{|z|\ge 2\e^{-1}\}},
\end{align*} where $c_1>0$ is independent of $m$.
This yields that for every $0<\beta<\alpha$, there is a constant $c_2>0$ such that for all $m\ge0$,
$ \gamma_{1,\beta}(\e)\le c_2 m|\log \e|^{-1}.
$
On the other hand, we can get that
\begin{align*}
&\gamma_2(\e)\le c_3(m)\left(\e^{2-\alpha}(\log \e^{-1})^{-m}\I_{\{\alpha\in (1,2)\}}+\e|\log \e|\I_{\{\alpha=1\}}+\e^\alpha\I_{\{\alpha\in (0,1)\}}\right),\\
&\gamma_{3,\beta}(\e)\le c_3(m)\e,\,\, \gamma_4(\e)\le c_3(m)\left(1+|\log \e|^{m+1}\I_{\{\alpha=1\}}+\e^{\alpha-1}|\log \e|^m\I_{\{\alpha\in (0,1)\}}\right).
\end{align*}
Here we should note that $\lim_{m\to 0}c_3(m)>0$.
Combining the above estimates
with \eqref{p2-1-1} yields the desired assertion \eqref{ex1-1-2}.

(2) We first prove the case $m\in (-1,\infty)$.
Given
$j(z)$ therein, it holds that
\begin{align*}
\int_{\{2<|z|\le \e^{-1}\}}z_iz_jj(z)\,dz&=
c_1\int_2^{\e^{-1}}\int_{\mathbb{S}^{d-1}}\frac{(\log r)^m r^2 \theta_i \theta_j}{r^{d+2}}r^{d-1}\,\sigma(d\theta)\,dr\\
&=c_1\left(\int_{\mathbb{S}^{d-1}}\theta_i \theta_j\,\sigma(d\theta)\right) \int_{2}^{\e^{-1}}\frac{(\log r)^m}{r}\,dr\\
&=c_2(m)\left(\int_{\mathbb{S}^{d-1}}\theta_i \theta_j\,\sigma(d\theta)\right)\left(|\log \e|^{m+1}-(\log 2)^{m+1}
\right),
\end{align*}
where $d\theta \,dr$ denotes the spherical coordinate of $\R^d$, and $\sigma$ is the rotationally symmetric
Lebesgue measure on the sphere $\mathbb{S}^{d-1}$.
Then, letting
$
A=\{a_{ij}\}_{1\le i,j\le d}$ with $a_{ij}:=\int_{\mathbb{S}^{d-1}}\theta_i \theta_j\,\sigma(d\theta)$ for all $1\le i,j\le d$,
we have
$ \lim_{\e \to 0}|\log \e|^{-(m+1)}\int_{\{|z|\le \e^{-1}\}}(z\otimes z)j(z)\,dz=c_2(m)A, $
which implies that \eqref{a1-1-3} holds with $\varphi(\e)=\e^{-2}|\log \e|^{-(m+1)}$ and
$\gamma_7(\e)\le c_3(m)|\log \e|^{-(m+1)}.$ Indeed, we shall note that $a_{ij}=0$ for every $i\neq j$ due to the rotationally symmetry of $\sigma$; that is, $A$ is a positive definite diagonal $d\times d$ matrix. Here we should note that $\lim_{m\to -1}c_2(m)=\lim_{m\to -1}c_3(m)=\infty$.

Furthermore, it is easy to see that \eqref{a2-1-1} holds with $a_0(z)=\frac{(\log |z|)^m}{|z|^{d+3}}$. With $\varphi(\e)=\e^{-2}|\log \e|^{-(m+1)}$, we can verify that for every $\beta\in (0,2)$,
\begin{align*}
&\gamma_2(\e)\le c_4(m)|\log \e|^{-(m+1)},\,\, \gamma_{3,\beta}(\e)\le c_5(m)\e\max\{|\log \e|^{-(m+1)},\,\, |\log \e|^{-1}\},\ \gamma_4(\e)\le c_5(m),\\
&\gamma_5(\e)\le c_5(m)|\log \e|^{-(m+1)},\,\, \gamma_{6,\beta}(\e)\le c_5(m)\e\max\{|\log \e|^{-(m+1)},|\log \e|^{-1}\},\,\,\gamma_8(\e)\le c_5(m)|\log \e|^{-1}.
\end{align*}
Here the constant $c_4(m)$ depends on $m$ and also satisfies that
$\limsup_{m\downarrow -1}c_4(m)=\infty$.
Combining the above estimates
with \eqref{t1-2-2} yields the desired assertion.

We next prove the case $m=-1$.  For $j(z)$ given here, it holds that
\begin{align*}
\int_{\{2<|z|\le \e^{-1}\}}z_iz_jj(z)\,dz&=
c_6\int_2^{\e^{-1}}\int_{\mathbb{S}^{d-1}}\frac{r^2 \theta_i \theta_j}{r^{d+2}\log r}r^{d-1}\,\sigma(d\theta)\,dr\\
&=c_6\left(\int_{\mathbb{S}^{d-1}}\theta_i \theta_j\,\sigma(d\theta)\right)  \int_{2}^{\e^{-1}}\frac{1}{r\log r}\,dr\\
&=c_6
\left(\int_{\mathbb{S}^{d-1}}\theta_i \theta_j\,\sigma(d\theta)\right) \left(\log \log \e^{-1}-\log\log 2\right),
\end{align*}
and so
\begin{align*}
\lim_{\e \to 0}|\log\log \e^{-1}|^{-1}\int_{\{|z|\le \e^{-1}\}}(z\otimes z)j(z)\,dz
=c_6A,
\end{align*}
where $A$ is defined as
in the case $m\in (-1,\infty)$
that is a positive definite diagonal $d\times d$ matrix.
In particular,  \eqref{a1-1-3} holds with $\varphi(\e)=\e^{-2}|\log\log \e^{-1}|^{-1}$ and
$\gamma_7(\e)\le c_7|\log\log \e^{-1}|^{-1}.$

 Furthermore, it is easy to see that \eqref{a2-1-1} holds with $a_0(z)=\frac{1}{|z|^{d+3}(\log |z|)}$. By taking $\varphi(\e)=\e^{-2}|\log\log \e^{-1}|^{-1}$, we know that for every $\beta\in (0,2)$,
\begin{align*}
&\gamma_2(\e)\le c_8|\log\log \e^{-1}|^{-1},\,\, \gamma_{3,\beta}(\e)\le c_8\e|\log \e|^{-1}|\log\log \e^{-1}|^{-1},\,\, \gamma_4(\e)\le c_8,\\
&\gamma_5(\e)\le c_8|\log\log \e^{-1}|^{-1},\,\, \gamma_{6,\beta}(\e)\le c_8\e|\log \e|^{-1}|\log\log \e^{-1}|^{-1},\,\, \gamma_8(\e)\le c_8|\log \e|^{-1}|\log\log \e^{-1}|^{-1}.
\end{align*}
Putting all the estimates above into \eqref{t1-2-2}, we obtain the desired assertion.

(3)  We consider the case of $\alpha=2$ and $m<-1$ first.
With the expression  of $j(z)$, we have, for every $\beta\in (0,2)$,
$$
\gamma_2(\e)\le c_1(m),\,\, \gamma_{3,\beta}(\e)\le
c_1(m)
\e|\log \e|^{m},\,\,\gamma_4(\e)\le
c_1(m)
,\,\, \gamma_9(\e)\le c_2(m)|\log \e|^{m+1}.
$$
Here the constant $c_2(m)$ depends on $m$ and satisfies that
$\lim_{m\uparrow -1}c_2(m)=\infty$.
Combining these estimates with \eqref{t1-3-2} yields the desired assertion.

For the case that $\alpha>2$ and $m\in \R$, we can get by  direct calculations that
$$\xi_{2,\beta}(\e)\le c_1(m)\e,\quad \xi_{4,\beta}(\e)\le c_1(m)\max\{\e, \e^{\alpha-2}|\log \e|^m\}.$$ Then the desired assertion follows from \eqref{t1-3-2}.
\end{proof}

\begin{proof}[Proof of Example $\ref{ex1-3}$]
(1) In this case, \eqref{a1-1-1} holds with $\varphi(\e)=\e^{-\alpha}$, and
\begin{align*}
\Pi_\e(z)&\le \left(\e^{\alpha_0-\alpha}\frac{1}{|z|^{d+\alpha_0}}+\frac{1}{|z|^{d+\alpha}}\right)\I_{\{|z|\le 2\e\}}
+\left|\frac{1}{|z|^{d+\alpha}+\e^{\alpha-\alpha_1}|z|^{d+\alpha_1}}-\frac{1}{|z|^{d+\alpha}}\right|\I_{\{|z|>2\e\}}\\
&\le \left(\e^{\alpha_0-\alpha}\frac{1}{|z|^{d+\alpha_0}}+\frac{1}{|z|^{d+\alpha}}\right)\I_{\{|z|\le 2\e\}}
+c_1\e^{\alpha-\alpha_1}\frac{|z|^{\alpha_1}}{|z|^{ d+2\alpha}}\I_{\{|z|>2\e\}}.
\end{align*}
This implies that $\gamma_{1,\alpha}(\e)\le c_2\max\{\e^{2-\alpha},\e^{\alpha-\alpha_1}\}$.
On the other hand, one can get that
\begin{equation}\label{ex1-1-5}
\gamma_{2}(\e)\le c_3\xi_0(\e),\,\, \gamma_{3,\alpha}(\e)\le c_3\e,\,\, \gamma_4(\e)\le c_3\left(1+|\log \e|\I_{\{\alpha=1\}}+\e^{\alpha-1}\I_{\{\alpha\in (0,1)\}}\right).
\end{equation}
Putting all the estimates above together into \eqref{p2-1-1}, we
get the   desired assertion.

(2) In this case,  \eqref{a1-1-1} still holds with $\varphi(\e)=\e^{-\alpha}$. On the one hand, we have
\begin{align*}
\Pi_\e(z)&\le \left(\e^{\alpha_0-\alpha}\frac{1}{|z|^{d+\alpha_0}}+\frac{1}{|z|^{d+\alpha}}\right)\I_{\{|z|\le 2\e\}}
+\frac{\e^{\alpha_1-\alpha}}{|z|^{d+\alpha_1}}\I_{\{|z|>2\e\}}
\end{align*}
and so
$
\gamma_{1,\alpha}(\e)\le c_2\max\{\e^{2-\alpha},\e^{\alpha_1-\alpha}\}$.
One the other hand, we can see that \eqref{ex1-1-5} also holds true. Therefore, according to all the estimates above and \eqref{p2-1-1},
we get the  desired assertion.
\end{proof}

\section{Appendix} In this part, we will  prove the following statement.

\begin{proposition} Suppose that
there are constants $c_0>0$ and $\alpha_0\in (1,2)$ such that $j(z)=c_0 |z|^{-(d+\alpha_0)}$
for all $z\in \R^d$ with $|z|\le 1$. Then, Assumption $\ref{a1-2}$ holds. \end{proposition}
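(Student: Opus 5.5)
The plan is to solve the periodic Poisson equation $\sL\phi=f$ on $\T^d$ by a Fredholm/resolvent argument for the process projected to the torus, and then upgrade boundedness to a $C^1$ bound using the stable-like structure of $j$ near the origin. First, I will split $\sL=\sL_0+\sL_1$, where
\[
\sL_0 g(y)={\rm p.v.}\int_{\{|z|\le 1\}}(g(y+z)-g(y))K(y,y+z)\frac{c_0}{|z|^{d+\alpha_0}}\,dz
\]
is the small-jump part and $\sL_1$ is the large-jump part, with kernel $K(y,y+z)j(z)\I_{\{|z|>1\}}$. The part $\sL_1$ is a bounded operator on $C(\T^d)$ and on $L^\infty(\T^d)$ with norm at most $2K_2\int_{\{|z|>1\}}j(z)\,dz$. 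Since $K$ is symmetric and periodic, $\sL$ induces a symmetric Dirichlet form on $L^2(\T^d)$, namely $\sE(g,g)=\frac12\int_{\T^d}\int_{\R^d}(g(y+z)-g(y))^2K(y,y+z)j(z)\,dz\,dy$. Because $j\ge c_0|z|^{-d-\alpha_0}$ near $0$, this form dominates the $H^{\alpha_0/2}(\T^d)$ seminorm. This gives a Poincaré inequality on mean-zero functions and compactness of the resolvent. So for mean-zero $f\in C(\T^d)\subset L^2$ there is a unique mean-zero weak solution $\phi_f\in H^{\alpha_0/2}(\T^d)$ with $\|\phi_f\|_{L^2}\le c\|f\|_{L^2}$. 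Uniqueness in the class of continuous solutions follows from the maximum principle on the compact torus (the process is irreducible because $j>0$ near $0$).

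Second, I will derive an $L^\infty$ bound. One route is the ergodicity of the projected Feller process $Y$ on $\T^d$. Its transition density is bounded and strictly positive for $t\ge1$, by the heat kernel estimates for stable-like operators with bounded perturbation (Chen--Kumagai type results). This gives exponential convergence $\|P_tf\|_\infty\le Ce^{-ct}\|f\|_\infty$ for mean-zero $f$. Then $\phi_f=-\int_0^\infty P_tf\,dt$ is continuous, satisfies $\|\phi_f\|_\infty\le C_0\|f\|_\infty$, lies in $\D(\sL)$, and solves $\sL\phi_f=f$ pointwise by Dynkin's formula.

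Third, and this is the main obstacle, I need the gradient bound $\|\nabla\phi_f\|_\infty\le C\|f\|_\infty$. This is where $\alpha_0>1$ is essential: the equation has order $\alpha_0>1$, so solutions with bounded right-hand side lie in $C^{1,\gamma}$ for $\gamma<\alpha_0-1$. I will rewrite the equation as $\sL_0\phi_f=f-\sL_1\phi_f=:g$ with $\|g\|_\infty\le C\|f\|_\infty$. I will then invoke Schauder/Hölder regularity for non-local operators with kernels $K(y,y+z)c_0|z|^{-d-\alpha_0}$, where $K$ is $C^1$ and bounded between positive constants (e.g. results of Bass, or Dong--Kim, or Kassmann--Schwab for $\alpha_0>1$). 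These give $\|\phi\|_{C^{\alpha_0-\delta}}\le C(\|g\|_\infty+\|\phi\|_\infty)$, and since $\alpha_0-\delta>1$ for small $\delta$, the gradient bound follows.

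If I want to avoid citing variable-coefficient results, I will instead freeze coefficients. I write $\sL_0 = \kappa(y)\,\Delta^{\alpha_0/2}$-type part plus a remainder. The kernel $K(y,y+z)-K(y,y)$ is $O(|z|)$, so the remainder has order $\alpha_0-1<1$ and can be absorbed by interpolation. Then I apply the explicit resolvent/heat kernel gradient estimates for the constant-coefficient fractional Laplacian, in the spirit of the heat kernel argument of Lemma~\ref{l1-1}. Combining the $C^1$ estimate with the $L^\infty$ bound from the second step yields Assumption~\ref{a1-2}.
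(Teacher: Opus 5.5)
Your outline is correct. Step~2 is essentially the paper's ergodic argument: you get exponential mixing of the projected process from a Doeblin minorization via a positive lower bound on the torus heat kernel, while the paper gets it from irreducibility plus the strong Feller property via Meyn--Tweedie. The genuine difference is how you obtain $\|\nabla\phi_f\|_\infty\le C\|f\|_\infty$.

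The paper never leaves the semigroup. It writes $\sL$ as a stable-like operator with drift plus a bounded large-jump perturbation, and takes from Chen--Zhang the estimate $\sup_x|\nabla_x\Ee_x f(X_t)|\le C(t\wedge 1)^{-1/\alpha_0}\|f\|_\infty$. It transfers this estimate to $\sL$ by Duhamel's formula and to $\T^d$ by periodization, then differentiates $\phi_f=-\int_0^\infty \Ee_x f(X^{\T^d}_t)\,dt$ under the integral, using the small-time gradient bound together with the exponential decay of $P_tf$. In that route $\alpha_0>1$ enters only through $\int_0^1 t^{-1/\alpha_0}\,dt<\infty$, and no notion of solution beyond the probabilistic one is needed.

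You instead use elliptic $C^{1,\gamma}$ regularity for $\sL_0\phi_f=f-\sL_1\phi_f\in L^\infty$. This gives slightly more (H\"older continuity of $\nabla\phi_f$ for $\gamma<\alpha_0-1$), but you should make two points explicit.
\begin{itemize}
\item \emph{Which regularity result.} You need an $L^\infty\to C^{\alpha_0-\delta}$ estimate for $x$-dependent kernels $K(y,y+z)c_0|z|^{-d-\alpha_0}$ that are not symmetric in $z$. Schauder estimates require a H\"older right-hand side, which you do not have since $f$ is only continuous. H\"older estimates of De Giorgi/Krylov--Safonov type, as in Kassmann--Schwab, give only a small exponent. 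What works is, for instance, an $L^p$-theory for kernels $a(x,z)|z|^{-d-\alpha_0}$ with $a$ continuous in $x$, followed by Sobolev embedding with $p>d/(\alpha_0-1)$; a $C^{1,\gamma}$ viscosity theory for order $\alpha_0>1$ would also do.
\item \emph{Notion of solution.} You must check that the probabilistic $\phi_f$ solves the equation in the sense of the theorem you invoke. For the $L^p$ route your Step~1 handles this: mean-zero weak solutions are unique by your Poincar\'e inequality, and both $-\int_0^\infty P_tf\,dt$ and a strong $H^{\alpha_0}_p$ solution are weak solutions.
\end{itemize}

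Your freezing-of-coefficients variant is circular as written. Absorbing the order-$(\alpha_0-1)$ remainder by interpolation presupposes $\|\phi_f\|_{C^{\alpha_0-\delta}}<\infty$. To close it you would need either a bootstrap starting from an initial H\"older estimate, carried out in a distributional formulation, or an approximation argument.
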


\begin{proof} We first note that the  operator $\sL$ given by \eqref{e1-1} is symmetric in the sense that
$$
\int_{\R^d}  \sL f (x) g (x) \, dx  = \int_{\R^d} f (x)\sL g (x)\, dx
\quad \hbox{ for every } f, g \in C_c^2(\R^d).
$$
In fact, for every $f, g \in C_c^2(\R^d)$,
$$
\sE (f, g):=
-  \int_{\R^d}  g (x)  \sL f (x)\,  dx
=\frac12 \int_{\R^d \times \R^d} (f(x)-f(y))(g(x)-g(y)) K(x, y) j(x-y)\, dx\, dy.
$$
For $\kappa >0$, define $\sE_\kappa  (f, g):= \sE(f, g) + \kappa\int_{\R^d} f(x)g(x) \,dx$ and
$\| f\|_{\sE_\kappa }:= \sE_\kappa (f, f)^{1/2}$ for any $f,g\in C_c^2(\R^d)$.
When $K(x, y)\equiv  K_0 $ is a positive constant, $\sL$ is the generator of a symmetric  L\'evy process $(X_t)_{t\ge0}$ on $\R^d$
having
L\'evy measure $\nu (dz) := K_0  j(z)\, dz$.
 In this case, it follows  by using the
Fourier transform that
$$
\sF := \overline{C_c^2(\R^d)}^{ \| \cdot \|_{\sE_1}}
= \Big\{ f\in L^2(\R^d; dx):  \int_{\R^d \times \R^d} (f(x)-f(y))^2
j(x-y)\, dx \,dy <\infty \Big\}.
$$
The above continues to hold for a general symmetric function $K(x, y)$ satisfying $0<K_1\le K(x,y)\le K_2<\infty$
because $\sE_1 (f, f)$ is comparable to that of  the constant case. This shows that  $(\sE, \sF)$ is a regular Dirichlet
form on $L^2(\R^d; dx)$.
It is well known that there is a symmetric Hunt process $X:=(X_t)_{t\ge0}$ on $\R^d \setminus \sN$ associated with the regular Dirichlet form
$(\sE, \sF)$ on $L^2(\R^d; dx)$, where $\sN\subset \R^d$ is a Borel properly exceptional set for the process $X$ which in particular is
$\sE$-polar
and hence has zero Lebesgue measure. We refer the reader
to \cite{CF, FOT}  for the terminology and these facts about regular Dirichlet forms.

Next, we assume that $j(z)=c_0 |z|^{-(d+\alpha_0)}$
for all $z\in \R^d$ with $|z|\le 1$, where $c_0>0$ and $\alpha_0\in (1,2)$. In this case, since $\int_{\{|z|>1\}}j(z)\,dz<\infty$ and $0<K_1\le K(x,y)=K(y,x)\le K_2<\infty$, $(\sE, \sF)$ can be seen as a bounded perturbation of
the symmetric
$\alpha_0$-stable-like Dirichlet form. Then,
according to \cite[Theorem 1.1]{CK03} and \cite[Theoem 4.12]{BSW}, we can see that the associated symmetric Hunt process $X:=(X_t)_{t\ge0}$
can be refined to be
 a Feller process that can start from every $x\in \R^d$ and enjoys the strong Feller property. Indeed, we can write the operator $\sL$ defined by \eqref{e1-1} as follows:
\begin{equation}\label{e:lev}\sL f(x)=\int_{\R^d}\left(f(x+z)-f(x)-\langle\nabla f(x), z\rangle\I_{\{|z|\le 1\}}\right)K(x,x+z)
j(z)
\,dz + \langle\nabla f(x), b(x)\rangle,\quad f\in C_b^2(\R^d)\end{equation}where
$$
b(x) =\frac12 \int_{\{|z|\le 1\}} z (K(x,x+z)-K(x,x-z))j(z)\,dz,
$$
 which is bounded thanks to $K\in C_b^1(\R^d\times \R^d)$ and $\int_{\{|z|\le 1\}}|z|^2j(z)\,dz<\infty$.
So the operator $\sL$ can be regarded as a L\'evy-type operator in \cite{BSW}. Furthermore, by \eqref{e:lev}, for any $f\in C_b^2(\R^d)$,
$$\sL f(x)=\sL_1f(x)+\sL_2f(x),$$ where
\begin{equation}\label{e:sss}\sL_1f(x)= \int_{\R^d}\left(f(x+z)-f(x)-\langle\nabla f(x), z\rangle\I_{\{|z|\le 1\}}\right)K(x,x+z)
c_0|z|^{-d-\alpha_0}
\,dz + \langle\nabla f(x), b(x)\rangle\end{equation} and
$$\sL_2f(x)=\int_{\{|z|>1\}}\left(f(x+z)-f(x)\right)K(x,x+z) (j(z)-
c_0|z|^{-d-\alpha_0})
\,dz.$$ It is easy to see that there is a constant $C_0>0$ such that for all $f\in B_b(\R^d)$,
$$\|\sL_2f\|_\infty\le C_0\|f\|_\infty.$$

Since $K(\cdot, \cdot)$ is multivariate $1$-periodic, we can regard $X$ as an $\T^d$-valued process, which will be written as $X^{\T^d}:=(X^{\T^d}_t)_{t\ge0}$.
We claim the process $X^{\T^d}$ has  the property that for any nonempty open set $U\subset \T^d$, $\Pp_x(X^{\T^d}_t\in U)>0$ for every $t>0$, $x\in \T^d$, and also
has the strong Feller property (i.e., for any $f\in B_b(\T^d)$ and $t>0$, the function $x\mapsto \Ee_x f(X^{\T^d}_t)$ is bounded and continuous). Indeed, according to \cite[Theorem 1.5]{CZ}, the operator $\sL_1$ defined by \eqref{e:sss} has a transition density kernel $p_1(t,x,y)$ with respect to the Lebesgue measure such that for any $t>0$, $(x,y)\mapsto p_1(t,x,y)$ is continuous and strictly positive, and for any $t>0$ and $y\in \R^d$, the function $x\mapsto p_1(t,x,y)$ is differentiable; moreover, $C_b^2(\R^d)\subset \D(\sL_1)$. Hence, by Duhamel's formula for bounded perturbation of the Markov semigroup and the condition that $\alpha_0\in (1,2)$, one can see that the operator $\sL$ has a transition density kernel $p(t,x,y)$ which enjoys all the properties as mentioned above for $p_1(t,x,y)$.
In particular,
by \cite[Theorem 1.5(vi)]{CZ}, there is a constant $C_1>0$ such that for all $t>0$ and $f\in B_b(\R^d)$,
\begin{equation}\label{e:der}
\sup_{x\in \R^d} |\nabla_x \Ee_x f(X_t)|\le C_1 (t\wedge1)^{-1/\alpha_0}\|f\|_\infty.
\end{equation}
Let $\pi$ denote the canonical projection from $\R^d$ to $\T^d$. Then, for all $t\ge0$,
$$X_t^{\T^d}=\pi(X_t).$$ Then the process $(X_t^{\T^d})_{t\ge0}$ has the transition density given by
\begin{equation} \label{e:5.4}
p_{\T^d}(t,x,y)=\sum_{y'\in \pi^{-1}(y)} p(t,x,y'),\quad t>0, x,y\in \T^d.
\end{equation}
 So the process $X^{\T^d}$ is irreducible, and has the strong Feller property.
 Moreover, it follows from  \eqref{e:der} and \eqref{e:5.4} that
 for all $t>0$ and $f\in B_b(\T^d)$,
\begin{equation}\label{e:der2}
\sup_{x\in \T^d} |\nabla_x \Ee_x f(X^{\T^d}_t)|\le C_2 (t\wedge1)^{-1/\alpha_0}\|f\|_\infty.
\end{equation}
Therefore, according to \cite[Theorem 6.1]{MT} and \cite[Theorem 5.1]{T} (or \cite[Proposition 1.1]{SVW}), there are constants $\lambda_0, C_3>0$ such that for all $t>0$,
\begin{equation}\label{e:der3}\sup_{x\in \T^d} \|\Pp_x^{\T^d}(X^{\T^d}_t\in dz)-{\rm Leb}(dz)\|_{{\rm var}}\le C_3e^{-\lambda_0t},\end{equation} where $\Pp_x^{\T^d}(X^{\T^d}_t\in dz)$
denotes the distribution of $X_t^{\T^d}$ when the process $X^{\T^d}$ starts from $x\in \T^d$, ${\rm Leb}(dz)$ is the Lebesgue measure on $\T^d$ (which is a probability measure), and $\|\cdot\|_{{\rm var}}$
stands for
the total variation norm on the space of signed measures on $\T^d$. Here we used the fact that $(\sL, C_c^2(\R^d))$ is symmetric on $L^2(\R^d;dx)$, and so ${\rm Leb}(dz)$ is an invariant probability measure for the process $X^{\T^d}$.

By \eqref{e:der3}, for any $f\in C(\T^d)$ with $\int_{\T^d} f(y)\,dy=0$,
the function $x\mapsto \phi_f(x):=-\int_0^\infty \Ee_x f(X^{\T^d}_t)\,dt$ is pointwise well defined in the sense that
\begin{equation}\label{e:co1}\sup_{x\in \T^d} |\phi_f(x)|\le \sup_{x\in \T^d} \int_0^\infty| \Ee_x f(X^{\T^d}_t)|\,dt\le C_3\|f\|_\infty \int_0^\infty e^{-\lambda_0 t}\,dt=
C_3
\|f\|_\infty/\lambda_0.\end{equation} Moreover, $\phi_{f}\in C^1(\T^d)$, and, according to
\eqref{e:der2},
\begin{equation}\label{e:co2}
\sup_{x\in \T^d} |\nabla\phi_f(x)|\le \sup_{x\in \T^d} \int_0^\infty| \nabla \Ee_x f(X^{\T^d}_t)|\,dt\le C_2\|f\|_\infty \int_0^\infty \frac{1}{(t\wedge1)^{1/\alpha_0}}e^{-\lambda_0 t}\,dt.\end{equation}
These estimates along with (the proof of) \cite[Theorem 1.4]{SVW} imply that the equation
$\sL^{\T^d} \phi_f=f$ has a unique pointwise solution $\phi_{f}\in C^1(\T^d)\cap \D(\sL^{\T^d})$, where $\sL^{\T^d}$ is the infinitesimal generator of the process $X^{\T^d}$. Furthermore, by \eqref{e:co1} and \eqref{e:co2}, there is a constant $C_4>0$ such that for all $f\in C(\T^d)$, $\|\phi_{f}\|_\infty+\|\nabla \phi_{f}\|_\infty\le C_4\|f\|_\infty.$ Then, the desired assertion follows by transforming the conclusions above into the process $X$.
\end{proof}

\bigskip

\noindent {\bf Acknowledgements.}\,\,  The research of Xin Chen is supported by the National Natural Science Foundation of China
(No.\ 12122111). The research of Zhen-Qing Chen is partially supported by a Simons Foundation fund.
The research of Takashi Kumagai is supported by JSPS
KAKENHI Grant Numbers
22H00099, 23KK0050 and JST Moonshot R\&D Grant Number JPMJMS25A5.
The research of Jian Wang is supported by the National Key R\&D Program of China (2022YFA1006003) and the National Natural Science Foundation of China (Nos. 12071076 and 12225104).

\end{document}